\documentclass{amsart}
\usepackage{amssymb}
\usepackage[margin=1.16in]{geometry}
\usepackage[dvipsnames]{xcolor}
\colorlet{GRAY}{gray}
\usepackage{amsmath}
\usepackage{amssymb}
\usepackage{amsthm}
\usepackage[shortlabels]{enumitem}
\usepackage{mathdots}
\usepackage{tikz-cd}
\usepackage{subcaption}

\usepackage{todonotes}
\usepackage{wrapfig}

\usepackage{array}
\usepackage{tabularx}
\usepackage{url} 

\usepackage{pgf,tikz,pgfplots}

\usepackage{longtable}

\usepackage{setspace}

\usetikzlibrary{shapes}
\usepackage{blkarray}
\usepackage{multirow}
\usepackage{mathtools}
\usepackage{graphicx}

\usepackage{hyperref}
\hypersetup{hyperfootnotes=false,bookmarksnumbered,colorlinks=true, allcolors={RoyalBlue}}

\usepackage{theoremref}

\definecolor{light-gray}{gray}{0.95}

\let\amsamp=&

\newcommand{\pcoor}[1]{%
  \begingroup\lccode`~=`: \lowercase{\endgroup
  \edef~}{\mathbin{\mathchar\the\mathcode`:}\nobreak}%
  [
  \begingroup
  \mathcode`:=\string"8000
  #1%
  \endgroup 
  ]
}

\usepackage{float} 

\setstretch{1.15}

\theoremstyle{definition}
\newtheorem{defn}{Definition}[section]
\newtheorem{theorem}[defn]{Theorem}

\newtheorem{prop}[defn]{Proposition}
\newtheorem{lemma}[defn]{Lemma}
\newtheorem{cor}[defn]{Corollary}
\newtheorem{rk}[defn]{Remark}

\newtheorem*{stepk+1}{Step $\boldsymbol{k+1}$}

\numberwithin{subcase}{case}
 
\newtheorem{notation}[defn]{Notation}

\newtheorem{theoremletter}{Theorem}

\DeclareMathOperator{\Lie}{Lie}
\DeclareMathOperator{\Stab}{Stab}

\def\l{\lambda}

\newcommand{\GG}{{\mathbb G }}
 
\newcommand{\zmin}{Z_{\min}}
\newcommand{\xmino}{X_{\min}^0}
\newcommand{\hX}{\widehat{X}}
\newcommand{\tX}{\widetilde{X}}

\newcommand{\gitq}{/\!/}

\newcommand{\hU}{\widehat{U}}

\newcommand{\GL}{\operatorname{GL}}
\newcommand{\SL}{\operatorname{SL}}
\newcommand{\CC}{\mathbb{C}}

\usetikzlibrary{decorations.pathreplacing,arrows}
\tikzset{
  commutative diagrams/.cd,
  arrow style=tikz
}

\tikzset{
  symbol/.style={
    draw=none,
    every to/.append style={
      edge node={node [sloped, allow upside down, auto=false]{$#1$}}}
  }
}

\tikzset{>=to}

\title[Smoothness of non-reductive fixed point sets and cohomology of quotients]{Smoothness of non-reductive fixed point sets and cohomology of non-reductive GIT quotients }
\author{Eloise Hamilton}

\begin{document}

\begin{abstract}
We establish a method for calculating the Poincar\'e series of moduli spaces constructed as quotients of smooth varieties by suitable non-reductive group actions; examples of such moduli spaces include moduli spaces of unstable vector or Higgs bundles on a smooth projective curve, with a Harder-Narasimhan type of length two. To do so, we first prove a result concerning the smoothness of fixed point sets for suitable non-reductive group actions on smooth varieties. 
This enables us to prove that quotients of smooth varieties by such non-reductive group actions, which can be constructed using Non-Reductive GIT via a sequence of blow-ups, have at worst finite quotient singularities. We conclude the paper by providing explicit formulae for the Poincar\'e series of these non-reductive GIT quotients. \vspace{-0.7cm}
\end{abstract} 

\maketitle

\section{Introduction}

The aim of this paper is to study the topology of moduli spaces which can be constructed as quotients of smooth varieties by suitable non-reductive group actions, such as moduli spaces of unstable vector or Higgs bundles over a smooth projective curve. In order to do this we first prove a result concerning the smoothness of fixed point sets for non-reductive group actions on smooth varieties defined over a field of characteristic zero.

\subsection*{Smoothness of fixed point sets for linear algebraic group actions.} 

If a group $G$ acts on a smooth scheme $X$, an important question for its cohomological implications is whether the fixed point set\footnote{The fixed point set $X^G$ has an induced scheme-theoretic structure, defined in \cite{Fogarty1973}.} $X^G$ is also smooth. By a classical result, this is always true if $G$ is reductive\footnote{Reductivity is sufficient as we are assuming that we are working over a field of characteristic zero. If this is not the case then reductivity must be replaced by linear reductivity.} (see \cite[Prop 1.3]{Iversen1972} or \cite{Fogarty1973}). If $G$ is not reductive, then $X^G$ may not be smooth and indeed explicit counter-examples are given by Fogarty in \cite{Fogarty1973}. In fact, it follows from an equivalent characterisation of reductive groups that the class of such groups is the largest class of linear algebraic groups for which the result can be true: a linear algebraic group $G$ is reductive if and only if for every smooth scheme $X$ equipped with an $G$-action, the fixed point set $X^G$ is smooth \cite{Fogarty1977}\footnote{This result remains valid over a field of arbitrary characteristic provided reductivity is replaced by linear reductivity.}.

The first of the two main results of this paper shows that a positive result concerning smoothness of fixed point schemes can still be obtained for a certain class of non-reductive groups: 
instead of proving that the whole fixed point set is smooth (which cannot be true in general for any class of non-reductive groups), we prove that its intersection with a particular open subset of $X$ is smooth.

\begin{theoremletter}[Smoothness of non-reductive fixed point sets] \thlabel{mainsmoothnessresult}
Let $H = U \rtimes R$ denote a linear algebraic group\footnote{Over a field of characteristic zero any linear algebraic group $H$ can be written as the semi-direct product of its unipotent radical $U$ with a Levi subgroup $R$.} such that its unipotent radical $U$ is abelian and such that $R$ contains a central one-parameter subgroup $\lambda: \GG_m \to Z(R)$ acting with a single and positive weight on the Lie algebra of $U$ via the adjoint action. Suppose that $H$ acts on a projective scheme $X$, and let $\xmino$ denote the open Bialynicki-Birula stratum associated to the action of $\lambda(\GG_m)$ on $X$. Then $X^H \cap \xmino$ is smooth at any point at which $\xmino$ is smooth. In particular, the scheme $X^H \cap \xmino$ is smooth if $X$ is smooth. 
\end{theoremletter}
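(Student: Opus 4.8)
The plan is to work Zariski-locally at a point $x \in X^H \cap \xmino$ where $\xmino$ is smooth, and to analyze the defining equations of $X^H$ inside the smooth ambient space $\xmino$. Since $H = U \rtimes R$, we have $X^H = X^U \cap X^R$, and because $R$ is reductive the fixed locus $X^R$ is already smooth (by the classical Iversen--Fogarty result cited in the introduction); moreover $X^R \cap \xmino$ is smooth, being the intersection of two $R$-invariant smooth subschemes meeting transversally is not automatic, so more care is needed — instead I would run the whole argument $R$-equivariantly, or first restrict to the smooth scheme $X^R \cap \xmino$ and then take the $U$-fixed locus there. So the crux is: on a smooth scheme $Y$ (playing the role of $X^R \cap \xmino$, or of $\xmino$ itself with an $R$-action) equipped with an action of the abelian unipotent group $U$ admitting a one-parameter subgroup $\lambda$ of positive weight on $\Lie U$, show $Y^U$ is smooth at $\lambda$-fixed points lying in the open Bialynicki-Birula stratum.

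The key idea is to linearize using $\lambda$. At a fixed point $x$ of $\lambda$ in the open BB stratum $\xmino$, the tangent space $T_xX$ decomposes into $\lambda$-weight spaces, and membership in $\xmino$ forces all weights to be $\geq 0$ (this is the defining property of the open stratum through $Z_{\min}$). By equivariant formal (or étale) linearization of the $\lambda$-action at $x$ — available since $\lambda(\GG_m)$ is a torus acting on a smooth scheme — I can choose local coordinates in which $\lambda$ acts linearly and the retraction $\xmino \to Z_{\min}$ near $x$ becomes the projection onto the weight-zero coordinates. Now for $u \in U$, the fixed-point condition "$u \cdot x' = x'$" can be tested by pairing against a system of $\lambda$-semiinvariant functions; since $U$ is abelian and $\lambda$ acts on $\Lie U$ with a single positive weight $d > 0$, the group $U$ is, as a variety with $\lambda$-action, a sum of weight-$d$ lines, and the infinitesimal action map $\Lie U \otimes \mathcal{O}_Y \to T_Y$ is $\lambda$-equivariant of weight $d$. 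The upshot is that the ideal of $X^U$ near $x$ is generated by the components of the velocity vector fields $\xi_Y$ for $\xi \in \Lie U$, and $\lambda$-equivariance of weight $d>0$ together with non-negativity of the weights appearing in $T_xX$ gives strong control: the Jacobian of these generators at $x$ has rank equal to the codimension of $X^U$, i.e. the equations cut out a smooth subscheme. Concretely, one shows the differential at $x$ of the map $Y \to \Lie(U)^\vee \otimes (\text{bundle})$ whose zero locus is $Y^U$ is surjective onto the relevant quotient because the positive-weight shift prevents the image from hitting the weight-zero (i.e. $Z_{\min}$) directions spuriously.

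After establishing smoothness for the abelian-unipotent piece $U$, I would combine with the reductive piece $R$: writing $X^H = (X^U)^R$ (the $R$-action preserves $X^U$ since $U \trianglelefteq H$), and having shown $X^U \cap \xmino$ is smooth, the classical reductive result applied to the smooth $R$-scheme $X^U \cap \xmino$ yields that $(X^U \cap \xmino)^R = X^H \cap \xmino$ is smooth. One must check $\xmino$ is $R$-invariant: this holds because $\lambda$ is central in $R$, so $R$ commutes with $\lambda(\GG_m)$ and hence permutes — in fact fixes, up to the retraction — the BB strata, in particular preserving the open one. The final sentence of the theorem ($X$ smooth $\Rightarrow \xmino$ smooth $\Rightarrow X^H\cap\xmino$ smooth) is then immediate since an open subscheme of a smooth scheme is smooth.

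The main obstacle I anticipate is the second step: making rigorous the claim that the velocity-vector-field equations for $U$ cut out a smooth subscheme of $\xmino$ at $\lambda$-fixed points. The abelian-ness of $U$ and the single positive weight are exactly what should make this work — they ensure the relevant $\lambda$-equivariant map has no "obstruction" to transversality coming from weight-zero directions — but pinning down the linear-algebra statement about ranks of $\lambda$-equivariant Jacobians, and ensuring it is genuinely scheme-theoretic (not just set-theoretic smoothness, given that $X^H$ carries Fogarty's induced scheme structure), will require the careful local computation. The non-negativity of $\lambda$-weights on $T_x\xmino$ is the feature of the open BB stratum that is indispensable here, and contrasts with Fogarty's counterexamples where no such positivity is available.
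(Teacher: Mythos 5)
Your outer reduction (treat the graded unipotent part first, then apply the classical Iversen--Fogarty theorem for the residual reductive group acting on the resulting smooth fixed locus) is exactly the paper's final step (Corollary \ref{mainsmoothnessresultbody}), with one small correction: you should pass to the fixed points of $\hU=U\rtimes\lambda(\GG_m)$ first, so that the intermediate object is $X^{\hU}\cap\xmino=X^{U}\cap\zmin$; smoothness of $X^{U}\cap\xmino$ at points not fixed by $\lambda(\GG_m)$ is a stronger statement that is neither needed nor established. The real divergence is in the unipotent step. The paper does not attempt any local Jacobian computation: it builds an auxiliary projective variety $W\subseteq\PP(\operatorname{Mat}_{3\times 3}(k)\oplus k)$ out of $\operatorname{Sym}^2$ of the coadjoint representation of $\GG_a\rtimes\GG_m$, forms the non-reductive GIT quotient $Y=(W^{\dim U}\times X)\gitq\hU_T$ (checking \eqref{ss=s(U)} so that the $\hU$-theorem applies), proves that $z\in\zmin$ is fixed by $U$ if and only if an associated point of $Y$ is fixed by a torus (Lemmas \ref{relatingfixedpoints} and \ref{fixedpointcharac2}), and then invokes the reductive smoothness theorem on $Y$ -- thereby outsourcing precisely the transversality statement you are trying to prove by hand.

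The gap in your central step is genuine and is not just a matter of "pinning down the linear algebra": the hypotheses you propose to use are insufficient to imply the conclusion. Concretely, let $\hU=\GG_a\rtimes\GG_m$ act on $\mathbb{A}^3$ by $\lambda(t)\cdot(y_0,y_1,y_2)=(y_0,t^dy_1,t^{2d}y_2)$ and $s\cdot(y_0,y_1,y_2)=(y_0,\,y_1+sy_0^2,\,y_2+sy_0y_1+\tfrac{s^2}{2}y_0^3)$; one checks this is a bona fide $\hU$-action, $\lambda$ acts on $\Lie\GG_a$ with the single positive weight $d$, the origin is a $\lambda$-fixed point, all $\lambda$-weights on the tangent space there are $\geq 0$, and every point flows to the $\lambda$-fixed locus $\{y_1=y_2=0\}$. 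Yet the Fogarty ideal of $X^{\GG_a}$ at the origin is $(y_0^2,\,y_0y_1)$ and that of $X^{\hU}$ is $(y_0^2,y_1,y_2)$: both are non-reduced, so the fixed point scheme is not smooth there. Thus "$\lambda$-equivariance of weight $d>0$ of the velocity fields plus non-negativity of the weights on $T_xX$" cannot by itself force the Jacobian to have full rank. What rescues the theorem is the global hypothesis that the point lies in $\zmin=X\cap\PP(V_{\min})$ for a \emph{linear} action with $V_{\min}$ the \emph{minimal} weight space (in the model above, any equivariant projectivization pushes the origin out of the minimal stratum); minimality forces $V_{\min}\cap\operatorname{im}\rho(\xi)=0$ and kills the quadratic terms of the velocity field on $\PP(V)$, and your argument nowhere uses this. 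Even granting it, identifying $X^{U}$ near $z$ with $X\cap\PP(\ker\rho(\xi))$ still leaves the problem that a smooth variety can meet a linear subspace non-transversally, and ruling that out requires a lower bound on $\dim_z X^{U}$ that is not a pointwise computation -- this is exactly the work the paper's detour through the reductive quotient $Y$ performs. As written, the proposal therefore does not constitute a proof of the key step, and the heuristic it rests on is false without the global input from the linearization.
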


The groups considered in the above \thref{mainsmoothnessresult} are examples of `internally graded' groups; these are linear algebraic groups containing a central one-parameter subgroup acting with positive weights on the Lie algebra of their unipotent radical via the adjoint action. The relevance of internally graded groups is that they are the groups to which Non-Reductive Geometric Invariant Theory (GIT) applies. That is, recent results in Non-Reductive GIT established in \cite{Berczi2020} show that classical GIT has an effective analogue for internally graded groups, enabling the construction of quotients for actions of such groups. Our proof of \thref{mainsmoothnessresult} will use results from Non-Reductive GIT.

\subsection*{Group actions and cohomology.} Fixed point sets play an important role in understanding the cohomology of a space $X$ equipped with the action of a group $G$. Indeed, the cohomology of $X$ (either ordinary or equivariant) can often be related to that of the fixed point set $X^G$, which is typically easier to describe. A significant result in this direction is the Bialynicki-Birula decomposition for the action of an algebraic torus $T$ on a smooth projective variety $X$, which leads to a formula for the integral homology of $X$ in terms of the homology of the fixed point set $X^T$ \cite{Bialynicki-Birula1973,BialynickiBirula1974,Carrell1979}\footnote{Various generalisations of the results of \cite{Bialynicki-Birula1973,BialynickiBirula1974,Carrell1979} exist. For example, they can be extended to the case where $X$ is singular under certain additional assumptions (see \cite{Carrell1983,Kirwan1988}; note that in \cite{Kirwan1988} intersection cohomology is considered instead of singular cohomology). Moreover, the Bialynicki-Birula decomposition has been extended to the actions of linearly reductive groups on schemes of finite type and on algebraic spaces -- see \cite{Jelisiejew2019}.}. Other important results include the Atiyah-Bott and Beline-Vergne localisation theorems for the actions of compact Lie groups on smooth compact manifolds (see \cite{Berline1982,Atiyah1984}), which relate the equivariant cohomology ring of the manifold to that of the fixed point set (see also \cite{Edidin1998}). These localisation theorems are particularly useful as they lead to explicit formulae for integrals over equivariant cohomological classes in terms of integrals over the fixed point set, which are simpler to compute. 
Such formulae have indeed been applied to great effect in enumerative algebraic geometry \cite{Kontsevich1994,Ellingsrud1994}, symplectic geometry \cite[\S VI]{Audin2004} and mathematical physics \cite{Cordes1995,Szabo1996,Ferro2018} for example. 

The results mentioned above all rely on specific assumptions on the group, requiring reductivity of the group at a minimum; this is closely related to the fact that if $G$ is reductive then $X^G$ is smooth. A positive result such as \thref{mainsmoothnessresult} in the non-reductive case is useful therefore for studying the cohomology of spaces equipped with the action of non-reductive groups, and the second main result of our paper is an example of this. Indeed, \thref{mainsmoothnessresult} is a stepping stone to proving a general result concerning the cohomology of non-reductive GIT quotients, which is the main aim of this paper.

\subsection*{Poincar\'e series of non-reductive GIT quotients.}
Given the linear action of an internally graded linear algebraic group $H$ on an projective scheme $X$, Non-Reductive GIT enables the construction of a quotient for the action of $H$ on an open subset of $X$ with an explicit projective completion. In general\footnote{Blow-ups are required in Non-Reductive GIT when `semistability does not coincide with stability', a condition analogous to the corresponding condition in classical GIT.} this construction involves performing a sequence of blow-ups of $X$, analogous to the partial desingularisation construction of classical GIT \cite{Kirwan1985}. 
The procedure results in a projective scheme $\hX$ which admits a projective geometric quotient $X \gitq \hU$ for the action of $\hU: = U \rtimes \lambda(\GG_m)$ on an explicitly determined open subset of $\hX$, where $U$ is the unipotent radical of $H$ and $\lambda$ denotes the grading one-parameter subgroup. A projective good quotient $\hX \gitq H$ of an open subset of $\hX$ by the action of $H$ can then be obtained by considering the classical GIT quotient for the action of the reductive quotient group $H/\hU$ on the projective scheme $\hX \gitq \hU$; by construction this quotient is a projective completion of a geometric quotient for the action of $H$ on an open subset of $X$.

The second main result of this paper is to provide a formula for the Poincar\'e series of $\hX \gitq \hU$, and of $\hX \gitq H$ under additional assumptions, when $X$ is a smooth complex projective variety and $H$ is of the form given in \thref{mainsmoothnessresult}. \thref{mainsmoothnessresult} is essential to obtaining this formula as it is used to prove that the centres of the non-reductive blow-ups at each stage are smooth. The results we obtain can be summarised as follows (for a precise formulation see \thref{nonredformula} and \thref{mostgeneralformula}):

\begin{theoremletter}[Poincar\'e series of non-reductive GIT quotients]  \thlabel{maintheorem}
Let $H$, $X$ and $\xmino$ be as per \thref{mainsmoothnessresult} and defined over the complex numbers. Let $\zmin \subseteq X$ denote the closed subvariety of $X$ corresponding to the image of $\xmino$ under the map $x \mapsto \operatorname{lim}_{t \to 0} t \cdot x$ for $t \in \lambda(\GG_m)$ and $x \in X$. Suppose in addition that $X$ is smooth, that $H$ acts linearly on $X$ and that there exists a point in $\zmin$ with trivial stabiliser group in $U$\footnote{The latter assumption can be viewed as the non-reductive analogue of the assumption in the partial desingularisation construction that the stable locus is non-empty.}. 
Let $\hX$ denote the scheme resulting from the sequence of non-reductive blow-ups from the action of $\hU : = U \rtimes \lambda(\GG_m)$ on $X$. Then: \begin{enumerate}[(i)]\item \label{part1} the projective geometric quotient $\hX \gitq \hU$ has at worst finite quotient singularities and its Poincar\'e series can be expressed in terms of those of $\zmin$ and of iterated blow-ups of $\zmin$;
\item \label{part2} the intersection of the centre of the blow-up with the corresponding $\zmin$ at each stage is smooth, and can be identified as a resolution of singularities of an explicit closed subvariety of $\zmin$;
\item \label{part3} if the semistable and stable loci coincide for the induced action of $R_{\l} : = R/\lambda(\GG_m)$ on $\zmin$, then the projective geometric quotient $\hX \gitq H$ has at worst finite quotient singularities and its Poincar\'e series can be expressed in terms of those of $\zmin \gitq R_{\l}$ and of iterated blow-ups of $\zmin \gitq R_{\l}$. 
\end{enumerate}
\end{theoremletter}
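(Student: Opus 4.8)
The plan is to analyze the sequence of non-reductive blow-ups explicitly, tracking at each stage how the Poincaré series transforms, and to use Theorem A repeatedly to guarantee smoothness of the centres. First I would recall the structure of the Non-Reductive GIT construction from \cite{Berczi2020}: starting from $X$ with the $\hU$-action, one performs a sequence of blow-ups $\hX = X_{(m)} \to X_{(m-1)} \to \cdots \to X_{(0)} = X$, where at each stage the centre of the blow-up is (the closure of) the locus where the dimension of the $U$-stabiliser jumps to a prescribed value, intersected with the relevant minimal Bialynicki-Birula stratum. The key point is that this centre, as a subscheme of the ambient $X_{(i)}$, is built from the $H_{(i)}$-fixed locus inside the minimal stratum $X^0_{\min,(i)}$ — precisely the object that Theorem A shows is smooth whenever $X_{(i)}$ is smooth. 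So the inductive claim is: each $X_{(i)}$ is smooth, hence by Theorem A the centre $C_{(i)}$ is smooth, hence the blow-up $X_{(i+1)} = \Bl_{C_{(i)}} X_{(i)}$ is again smooth; this establishes part \ref{part2}, together with the identification of $C_{(i)} \cap Z_{\min,(i)}$ as a resolution of singularities of an explicit closed subvariety of $\zmin$ (the latter coming from the fact that the blow-up centre sits over a subvariety of $\zmin$ cut out by the stabiliser-jumping condition, and blowing up resolves it).

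For part \ref{part1}, once smoothness of all $X_{(i)}$ and of all centres $C_{(i)}$ is in hand, I would invoke the standard blow-up formula for rational cohomology (Poincaré series): for a smooth blow-up $\Bl_C Y \to Y$ with smooth centre $C$ of codimension $c$, one has $P_t(\Bl_C Y) = P_t(Y) + P_t(C)\bigl(t^2 + t^4 + \cdots + t^{2(c-1)}\bigr)$. Applying this telescopically down the tower expresses $P_t(\hX)$ in terms of $P_t(X)$ and the $P_t(C_{(i)})$, and each $C_{(i)}$ is an iterated blow-up of a closed subvariety of $\zmin$ by part \ref{part2}, so its Poincaré series is in turn expressible via $P_t(\zmin)$ and Poincaré series of blow-ups of $\zmin$. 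Then, to pass from $\hX$ to the quotient $\hX \gitq \hU$, I would use that $\hU = U \rtimes \lambda(\GG_m)$ acts on the relevant open subset of $\hX$ with the property that $U$ acts freely (using the trivial-$U$-stabiliser hypothesis on $\zmin$, propagated through the blow-ups) and $\lambda(\GG_m)$ acts with finite stabilisers on the appropriate locus; hence the quotient has at worst finite quotient singularities, and rationally $P_t(\hX \gitq \hU)$ is computed from the $\lambda(\GG_m)$-equivariant (equivalently, after quotienting by the free $U$-action) cohomology of the semistable locus, which by a Kirwan-type argument relating the $\GG_m$-quotient to $\hX$ and $\zmin$ yields the stated expression. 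Part \ref{part3} then follows by taking the further reductive GIT quotient of $\hX \gitq \hU$ by $R_\lambda = R/\lambda(\GG_m)$: since $\hX \gitq \hU$ contains $\zmin$ (as the $\lambda(\GG_m)$-fixed part) and the hypothesis that semistability equals stability for $R_\lambda$ on $\zmin$ makes this a genuinely nice quotient, one applies Kirwan's formula for the rational cohomology of $\zmin \gitq R_\lambda$ (or rather the relevant blow-up tower thereof) together with the blow-up formula again.

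The main obstacle I anticipate is keeping the bookkeeping of the blow-up tower coherent: one must check that the hypotheses of Theorem A (abelian unipotent radical, central grading one-parameter subgroup with a single positive weight, projectivity) are preserved at every stage of the construction — in particular that the $\hU$-action on each $X_{(i)}$ still has the required form and that the minimal stratum $X^0_{\min,(i)}$ behaves functorially under blow-up — and that the trivial-stabiliser hypothesis on $\zmin$ genuinely propagates to ensure freeness of the $U$-action on the relevant locus of $\hX$. A secondary subtlety is the precise identification in part \ref{part2} of the explicit closed subvariety of $\zmin$ being resolved: this requires understanding the image of the blow-up centre under the limit map $x \mapsto \lim_{t\to 0} t\cdot x$ and verifying that the blow-up indeed resolves its singularities, which may need a local analysis of the stabiliser-jumping locus. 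The cohomological formulae themselves, once smoothness is secured, are essentially an assembly of the classical blow-up formula and Kirwan's surjectivity/formula, so I expect those steps to be routine in comparison.
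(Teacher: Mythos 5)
Your overall strategy (induct up the blow-up tower, use Theorem~A for smoothness of the centres, then assemble the Poincar\'e series via the blow-up formula and the quotient formulas) matches the paper's, but there are two genuine gaps.

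First, and most seriously, at an intermediate stage the centre of the blow-up is $C_{\operatorname{max}}((X_{(i)})^0_{\operatorname{min}},\hU)$, the locus where the stabiliser dimension attains its \emph{current maximum} $d_{(i)}$ --- and for $i$ before the last stage one has $d_{(i)} < \dim U + 1$, so this locus is \emph{not} a fixed-point set of $H_{(i)}$, of $\hU$, or of any subgroup. Theorem~A therefore does not apply to it directly, contrary to your assertion that the centre ``is built from the $H_{(i)}$-fixed locus''. The paper bridges this gap (Theorem~\ref{smoothnessofbulocus}) with a local slice argument: near a point $x_0$ with $U' = \operatorname{Stab}_U(x_0)$ one chooses a complement $U'^{\perp}$ in the abelian group $U$, forms a local geometric quotient $Y = N_0/U'^{\perp}$, shows that membership of $C_{\operatorname{max}}(\xmino,U)\cap U\zmin$ corresponds exactly to being a fixed point of $\hU/U'^{\perp}$ on $Y$, and only then applies Theorem~A (in its $\hU$-form) to a projective closure $\overline{Y}$. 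Without some such device your induction does not get off the ground except at the final stage.

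Second, your bookkeeping runs at the level of the ambient varieties: you propose to compute $P_t(\hX)$ telescopically and then ``pass to the quotient''. But $P_t(\hX)$ neither determines nor is needed for $P_t(\hX\gitq\hU)$; the relevant formula is $P_t(\hX\gitq\hU) = P_t(\widehat{Z}_{\operatorname{min}})\tfrac{1-t^{2d}}{1-t^2}$, coming from the equivariantly perfect stratification of $\hX^0_{\operatorname{min}}$ and the retraction onto $\widehat{Z}_{\operatorname{min}}$. So the object to track through the tower is $Z(X_{(i)})_{\operatorname{min}}$, each of which is the blow-up of its predecessor along the smooth centre $C_{\operatorname{max}}(Z(X_{(i)})_{\operatorname{min}},U)$; the telescoping blow-up formula then applies to these, not to the $X_{(i)}$. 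Your ``Kirwan-type argument'' is exactly where this substitution must be made precise. A smaller omission for part~(iii): the hypothesis $\zmin^{ss,R_\lambda}=\zmin^{s,R_\lambda}\neq\emptyset$ is imposed on $\zmin$, whereas the quotient formula needs the same condition on $\widehat{Z}_{\operatorname{min}}$; the transfer is not automatic and the paper invokes the preservation of stability under blow-ups from Reichstein's work.
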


\subsection*{Applications to unstable bundles and Brill-Noether theory.} 

\thref{maintheorem} illustrates the general principle that viewing a moduli space as a quotient of a parameter space by a group action, which is in fact how most moduli spaces are constructed, is a powerful perspective for studying its cohomology. For moduli spaces which can be constructed as classical GIT quotients, this approach is pursued in \cite{Kirwan1985,Kirwan1986} and indeed \thref{maintheorem} can be viewed as  non-reductive analogue of the formula obtained in \cite{Kirwan1985}. The results of \cite{Kirwan1985,Kirwan1986} have been used for example to compute the Poincar\'e series (for ordinary or intersection cohomology) of a number of moduli spaces in algebraic geometry, including moduli spaces of products of Grassmannians \cite[\S 16]{Kirwan1984}, of vector bundles on a smooth projective curve \cite{Kirwan1986}, of K3 surfaces \cite{Kirwan1989}, of hypersurfaces in projective spaces \cite{Kirwan1989a,Casalaina-Martin2019}, and more recently of certain genus $4$ curves \cite{Fortuna2020} and of pure sheaf spaces \cite{Chung2021}. 

\thref{maintheorem} can be used to calculate the Poincar\'e series of moduli spaces which can be constructed as non-reductive GIT quotients of a smooth complex projective variety $X$ by the action of a group $H$ of the form given in \thref{mainsmoothnessresult}. 
This is the case for moduli spaces of unstable vector or Higgs bundles on a smooth projective curve (and more generally of unstable sheaves or Higgs sheaves on a smooth projective variety) with a fixed coprime Harder-Narasimhan type of length two\footnote{A Harder-Narasimhan type $\mu=(d_1/r_1,\hdots,d_1/r_1,d_2/r_2,\hdots,d_s/r_s)$ is coprime if $d_i$ and $r_i$ are coprime for each $i$. The length of $\mu$ corresponds to the integer $s$.}, as constructed in \cite{Brambila-Paz2009,Jackson2018,Hamilton2019a}.
In this setting, all of the conditions of \thref{mainsmoothnessresult} are satisfied and thus the Poincar\'e series of these moduli spaces can be computed from the Poincar\'e series of $\zmin \gitq R_{\l}$ and of iterated blow-ups of it using \ref{part3} of \thref{maintheorem}\footnote{To be more precise, \thref{maintheorem} enables the calculation of the Poincar\'e series of a partial compactification of the moduli spaces. The partial compactification is of the form $\hX \gitq H$ and by construction contains as an open subset a geometric quotient for the action of $H$ on an open subset of the parameter space $X$; the moduli space corresponds to this geometric quotient.}.

In this case the variety $\zmin \gitq R_{\l}$ corresponds to the moduli space of unstable vector or Higgs bundles which are isomorphic to their Harder-Narasimhan graded. In other words it is the product of two moduli spaces of semistable bundles of lower rank, the cohomology of which has and continues to be widely studied. Moreover, the centres of the blow-ups of $\zmin \gitq R_{\l}$ at each stage can be interpreted thanks to part \ref{part2} of \thref{maintheorem} as a partial resolution of singularities\footnote{The resolution of singularities is only partial because of the possible presence of finite quotient singularities arising from taking the quotient by $R_{\l}$.} of certain Brill-Noether loci associated to the base curve. If the bundles have rank two, then the corresponding  Brill-Noether loci are of rank one, the theory of which is well-known \cite{Arbarello2013}. This makes it feasible to obtain an explicit formula for the Poincar\'e series of moduli spaces of unstable Higgs bundles of rank two, a first step towards determining whether the cohomology of these moduli spaces is as rich as that of the moduli space of semistable Higgs bundles, which represents an active area of research (see for example \cite{Hitchin1987,Hausel2003a,Rayan2018,Maulik2021}).  
For higher rank bundles, the centres of the blow-ups at each stage represent partial desingularisations of higher rank Brill-Noether loci, which are far from fully understood (see for example \cite{Bigas1991,Bradlow2003,Osserman2013}); \thref{maintheorem} may therefore help shed new light on these loci. We will address the application of \thref{maintheorem} to unstable vector and Higgs bundles and its link with Brill-Noether theory in a separate paper.

\subsection*{Cohomology ring of quotients.} 
While we focus in this paper on the particular cohomological invariant given by the Poincar\'e series, this is not the only cohomological information which can be extracted from considering a moduli space as a quotient. In classical GIT, when semistability coincides with stability, the surjectivity of the Kirwan map allows generators of the cohomology ring of the GIT quotient to be obtained from generators of the equivariant cohomology ring of the semistable locus \cite{Kirwan1984,Kirwan1992}. Moreover, the problem of determining intersection pairings in the cohomology ring can be simplified by reducing to a maximal torus in the reductive group, thanks to non-abelian localisation theorems (see \cite{Jeffrey1995,Kalkman1996,Martin1999}). These results can be generalised to the case where semistability does not coincide with stability by replacing ordinary cohomology by intersection cohomology -- see \cite{Jeffrey2001}. An important application of these results is the computation of the cohomology ring of the moduli space of vector bundles on a smooth projective curve (see \cite{Jeffrey1998} for the coprime case and \cite{Jeffrey2005} for the non-coprime case). 

In Non-Reductive GIT, when a condition analogous to the condition that semistability coincides with stability is satisfied, it is shown in \cite{Berczi2019} that methods  similar to the classical case can be used to compute the cohomology ring of non-reductive GIT quotients. The results of \cite{Berczi2019} have been used in \cite{Berczi2019a} to prove the polynomial Green-Griffiths-Lang and Kobayashi conjectures. Our hope is that the results of \cite{Berczi2019} regarding the cohomology ring of non-reductive GIT quotients can be extended to the general non-reductive case, namely when blow-ups are required to construct the quotient, to shed light on the structure of the cohomology ring of such quotients beyond their Poincar\'e series.

The method of abelian localisation for computing the cohomology ring of a quotient has been generalised in \cite{Hausel2005} to the case of hyperk\"ahler quotients. Such quotients can arise in classical GIT when considering the induced action on the cotangent bundle of the parameter space; examples include hypertoric varieties, quiver varieties and hyperpolygon spaces. Results from \cite{Proudfoot2011} show that information about the cohomology ring of the quotient of the cotangent bundle can be extracted from that of the initial quotient. We hope in future work to investigate whether non-reductive counterparts to these results exist.

\subsection*{Structure of the paper.} In Section \ref{sec:reviewofnrgit} we summarise the results of Non-Reductive GIT which we use in this paper. 
In Section \ref{sec:smoothness} we prove  \thref{mainsmoothnessresult} using results from Non-Reductive GIT. 
In Section \ref{sec:cohomss=s} we summarise existing results concerning the computation of the Poincar\'e series of GIT quotients in the case where `semistability coincides with stability' (in either the classical or the non-reductive sense). 
In Section \ref{sec:cohomssneqspart2} we generalise these results to the case where `semistability does not coincide with stability' by proving \thref{maintheorem}.

\subsection*{Conventions.} 

In this paper we work over a field $k$ of characteristic zero, specialising to the case where $k= \mathbb{C}$ from Section \ref{sec:cohomss=s} onwards. By a scheme we mean a scheme of finite type over $k$. For cohomological purposes we will work mostly with smooth schemes, and since smooth (connected) schemes are reduced and irreducible, for simplicity we will work with varieties throughout. We do not assume that all varieties are irreducible and will add the qualifier when needed.

\subsection*{Acknowledgements.} Most of the work presented in this paper was completed during my DPhil under the supervision of Professor Frances Kirwan and I am profoundly grateful for all her support and guidance. I would also like to thank Gergely B\'erczi and David Rydh for many helpful conversations.

\section{Review of Non-Reductive GIT}  \label{sec:reviewofnrgit}

In this section we summarise the main results of Non-Reductive Geometric Invariant Theory (GIT). Section \ref{subsec:motivandsetup} introduces the set-up and notation required to formulate the results of Non-Reductive GIT. Section \ref{subsec:ss=snonred} describes the main results of Non-Reductive GIT under the assumption that `semistability coincides with stability', while Section \ref{subsec:ssneqsnonred} considers the case where this condition is not satisfied.

\subsection{Set-up for Non-Reductive GIT} \label{subsec:motivandsetup}

Given the linear action of a reductive group $G$ on a projective variety $X$, three key features of classical GIT are: \begin{enumerate}[(i)]
 \item  the existence of a projective GIT quotient $X \gitq G$ obtained as the projective spectrum of the ring of (finitely generated) invariants; 
 \item  the good quotient map from the semistable locus $X^{ss}$ to the GIT quotient $X \gitq G$ induced by the inclusion of the invariants and which restricts to a geometric quotient on the stable locus $X^s$; 
 \item the Hilbert-Mumford criterion which allows the computation of the semistable locus without having to find invariants. 
 \end{enumerate}

All three features rely on the reductivity of the group, and if $G$ is no longer reductive each feature can indeed fail. Explicit examples of each of these failures are given in \cite{Doran2007}, which represents the starting point of Non-Reductive GIT. 

\subsubsection*{Towards Non-Reductive GIT} \cite{Doran2007} addresses the problem of generalising existing methods from GIT to linear group actions by non-reductive groups. Notions of semistability and stability for the action of a linear algebraic group $H$ on a projective\footnote{Semistability and stability are defined in \cite{Doran2007} for arbitrary varieties, not necessarily projective. Nevertheless, the projectivity assumption is necessary for many of the results obtained in \cite{Doran2007}.} variety $X$ are defined (notions which reduce to the classical notions when the group is reductive), and the existence of a geometric quotient for action of $H$ on the stable locus and of a canonical `enveloping quotient' $X \gitq H$ of the semistable locus is proved. 
Moreover, it is shown how the semistable and stable loci can be computed explicitly from the Hilbert-Mumford criterion applied to the reductive group action used to define the projective completion of the enveloping quotient $X \gitq H$. 

Nevertheless, the enveloping quotient may not be projective (if the invariants are not finitely generated) and moreover the map from the semistable locus to the enveloping quotient may not be surjective. The introduction of a `grading' multiplicative group, first considered in \cite{Berczi2016a}, solves both issues simultaneously and by doing so enables a generalisation of GIT to non-reductive group actions which essentially preserves all of the features of classical GIT. It is this generalisation which gives rise to Non-Reductive GIT, the main results of which appear in \cite{Berczi2020}.

\subsubsection*{Role of the grading $\GG_m$} Instead of considering the action of a general linear algebraic group $H$, Non-Reductive GIT considers the action of a semi-direct product $\widehat{H}: =  H \rtimes \GG_m$ where the multiplicative group $\GG_m$ acts with strictly positive weights on the Lie algebra of the unipotent radical $U$ of $H$ (called a \emph{grading} $\GG_m$). 

Given that a linear algebraic group\footnote{We recall that we are working over a field of characteristic zero.} $H$ can be written as a semidirect product $U \rtimes R$ of its unipotent radical with a reductive subgroup, Non-Reductive GIT constructs a quotient for the linear action of $\widehat{H}$ on a projective variety $X$ `in stages': first by constructing a quotient for the action of $\widehat{U} : = U \rtimes \GG_m$ on $X$, and then by using classical GIT to construct a quotient of the resulting projective quotient by an induced action of the reductive group $R$. Thus the crux of the theory consists in constructing quotients by groups of the form $\hU = U \rtimes \GG_m$ where $U$ is a unipotent group and the multiplicative group $\GG_m$ acts on $\Lie U$ via the adjoint action with positive weights. We call such groups \emph{externally graded unipotent groups}.

The grading $\GG_m$ is used in two fundamental ways to construct a projective geometric quotient $X \gitq \hU$ for the action of $\hU$ on an explicitly determined open subset of $X$ (assuming that a certain condition regarding unipotent stabiliser groups is met). Firstly, it is used to define an open subset of $X$ which admits a locally trivial $U$-quotient. 
Secondly, it is used to define a linear $\GG_m$-action on a suitable projective completion of this $U$-quotient. 
By construction, the GIT quotient for the linear $\GG_m$-action on the projective completion of the $U$-quotient is a projective variety admitting a surjective map from an open subset of the initial variety $X$, and this open subset can be explicitly determined thanks to the Hilbert-Mumford criterion applied to the action of $\GG_m$ on the projective completion of the $U$-quotient. In fact, the linearisation of the $\GG_m$-action on the projective completion of the $U$-quotient is constructed in such a way that the resulting projective GIT quotient is a geometric quotient for the action of $\hU$ on the explicitly determined open subset of $X$.

\subsubsection*{Key definitions.}
Formulating the results of Non-Reductive GIT requires introducing the following definitions. Let $\hU := U \rtimes \GG_m$ denote an externally graded unipotent group and suppose that $\hU$ acts linearly on a projective variety $X$ with ample line bundle $L$. By taking a tensor power of $L$ if necessary we can assume that it is very ample, so that $X \subseteq \mathbb{P}(V)$ where $V = H^0(X,L)^{\vee}$. Let $\omega_{\operatorname{min}} = \omega_0 < \omega_1 < \cdots < \omega_{\operatorname{max}}$ denote the weights with which $\lambda(\mathbb{G}_m)$ acts on $V$ and let $V_{\operatorname{min}}$ denote the minimal weight space for the action of $\lambda(\mathbb{G}_m)$ on $V$. We then define a closed subvariety $\zmin$ of $X$ by $$Z_{\operatorname{min}} := X \cap \mathbb{P}(V_{\operatorname{min}})$$ and an open subvariety $\xmino$ of $X$ by $$X^0_{\operatorname{min}} : = \left\{ x \in X \ | \ \operatorname{lim}_{t \to 0} t \cdot x \in Z_{\operatorname{min}} \right\}.$$ Note that there is a natural retraction map $p: \xmino \to \zmin$ given by $x \mapsto \operatorname{lim}_{t \to 0} \lambda(t)\cdot x$ for $t \in \GG_m$.

\begin{notation}[Notation for $\zmin$ and $\xmino$]
If $\hU$ acts on another projective variety $Y$ (i.e.\ a variety not denoted by $X$), then we let $Y^0_{\operatorname{min}}$ and $Z(Y)_{\operatorname{min}}$ denote the analogues for $Y$ of $\xmino$ and $\zmin$ respectively.  
\end{notation}

The linearisation of the action of $\hU$ on $X$ is \emph{adapted} if $\omega_{\operatorname{min}} < 0 < \omega_1$. We note that by taking a positive tensor power and twisting the linearisation by an appropriate character, we can always assume that the linearisation of the $\hU$-action on $X$ is adapted.

\subsection{When `semistability coincides with stability'} \label{subsec:ss=snonred}

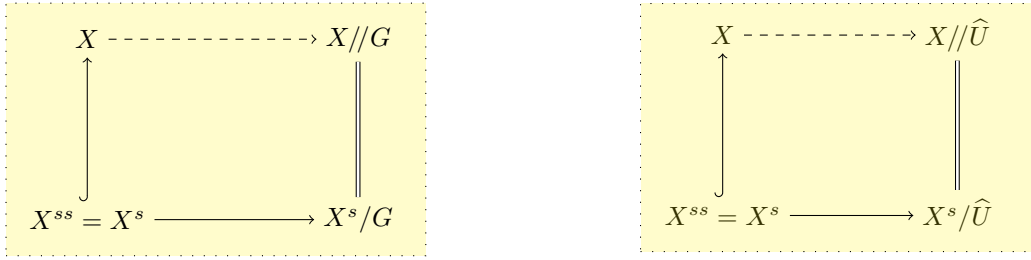
\begin{figure}
\begin{subfigure}{0.45\textwidth}
\centering
        \begin{tikzpicture}[scale=0.8,x=(15:8cm),y=(0:4.5cm), z=(90:1.5cm),
    cross line/.style={preaction={draw=white,-,line width=3pt}},
    equal/.style={double distance=1pt}]

\node (c1) at (1,-1.30,-0.4) {  } ;
\node (c2) at (1,-1.30,2.4) { };
\node (c3) at (1,0.25,-0.4) { };
\node (c4) at (1,0.25,2.4) { };

\filldraw  [fill = yellow, draw=black,loosely dotted, opacity=0.2] (c1) rectangle (c4);

\node (16) at (1,0,2) {\(   X \gitq G  \)};
\node(19) at (1,-1,2) {\( X \)};
\node(19') at (1,0,0) {\( X^s / G \)};
\node(20) at (1,-1,0) {\(  X^{ss} = X^{s}    \)};

\draw [black,loosely dotted] (c1) rectangle (c4);
\draw [dashed, ->] (19) -- (16);
\draw[ ->] (20) -- (19');
\draw [equal] (19') -- (16);
\draw [right hook ->] (20) -- (19);

		\end{tikzpicture}
        \caption{GIT for the linear action of a reductive group $G$ on a projective variety $X$, when the equality $X^{ss} = X^{s}$ is satisfied.} \label{fig:classicalgit1} 
	\end{subfigure} \hspace{1cm}
\begin{subfigure}{0.45\textwidth}
       \centering
       \begin{tikzpicture}[scale=0.8,x=(15:8cm),y=(0:3.9cm), z=(90:1.5cm),
    cross line/.style={preaction={draw=white,-,line width=3pt}},
    equal/.style={double distance=1pt}]
  
\node (c1) at (1,-1.35,-0.4) {  } ;
\node (c2) at (1,-1.35,2.35) { };
\node (c3) at (1,0.35,-0.4) { };
\node (c4) at (1,0.35,2.35) { };

\node (16) at (1,0,2) {\(   X \gitq \hU \)};
\node(19) at (1,-1,2) {\( X \)};
\node(20) at (1,-1,0) {\(   X^{ss} = X^{s} 
 \)};
\node (19') at (1,0,0) {\( X^s / \hU \)};
     
\filldraw  [fill = yellow, draw=black,loosely dotted, opacity=0.2] (c1) rectangle (c4);
  
\draw [black,loosely dotted] (c1) rectangle (c4);
\draw [equal] (16) -- (19');
\draw [dashed, ->] (19) -- (16);
\draw[right hook ->] (20) -- (19);
\draw [->] (20) -- (19');
     
		\end{tikzpicture}
        \caption{GIT for the linear action of an externally graded unipotent group $\hU$ on a projective variety $X$, when \eqref{ss=s(U)} is satisfied.} \label{fig:unip1}
    \end{subfigure} 

  \caption{Comparison of GIT for reductive and for  externally graded unipotent groups when `semistability coincides with stability'.} \label{fig:comparison}
\end{figure}

The building block of Non-Reductive GIT is the $\hU$-theorem (see \cite[Thm 2.16]{Berczi2020}). The theorem states that if the linear action of $\widehat{U}$ on $X$ satisfies an additional condition (analogous to the condition that semistability coincides with stability in classical GIT), then after taking a positive tensor power and twisting the linearisation by a suitable rational character, all of the properties of classical GIT in the case when semistability coincides with stability can be recovered. The precise formulation is as follows.

\begin{theorem}[$\hU$-theorem] \thlabel{uhatthm} 
Let $\hU = U \rtimes \mathbb{G}_m$ where $\GG_m$ acts with strictly positive weights on $\operatorname{Lie} U$ via the adjoint action and suppose that $\hU$ acts linearly on an irreducible projective variety $X$ in such a way that the linearisation is adapted. 
Then, if the condition
\begin{equation} 
\label{ss=s(U)}
\Stab_U (z)= \{e\} \text{ for all $z \in \zmin$} \tag{$ss=s \neq \emptyset [\hU]$}
\end{equation} is satisfied\footnote{This condition is analogous to the condition in classical GIT that $X^{ss} = X^{s} \neq \emptyset$, which is why it is denoted \eqref{ss=s(U)}.}, we have:
\begin{enumerate}[(i)]
\item there is exists a projective geometric quotient $$ X^s := \xmino \setminus U \zmin \to X \gitq \hU$$ for the action of $\hU$ on the open subset $\xmino \setminus U \zmin$ of $X$, so that set-theoretically $X \gitq \hU = X^s / \hU$;
\item there exists an $\epsilon >0$ such that if linearisation is modified\footnote{This can be achieved by taking a tensor power and twisting by a suitable character.} so that the inequality $\omega_{\operatorname{min}} < 0  < \omega_{\operatorname{min}} + \epsilon   < \omega_1$ is satisfied, then the resulting algebra of invariants $\bigoplus_{k \geq 0} H^0(X,L^{\otimes k})^{\hU}$ is finitely generated and its associated projective variety is isomorphic to $X \gitq \hU$. \label{fginvariants}
\end{enumerate}
\end{theorem}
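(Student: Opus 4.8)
The plan is to establish the $\hU$-theorem by reducing the construction of $X \gitq \hU$ to a classical GIT quotient for the residual torus $\hU/U \cong \GG_m$ acting on a projective completion of a geometric $U$-quotient. Fix the linearisation so that $X \subseteq \PP(V)$ with $V = H^0(X,L)^{\vee}$; write $\omega_{\min}=\omega_0 < \omega_1 < \cdots < \omega_{\max}$ for the weights of $\lambda(\GG_m)$ on $V$, recall $\zmin = X \cap \PP(V_{\min})$, that $\xmino$ is the set of points with $\lim_{t\to 0}\lambda(t)\cdot x \in \zmin$, and that $p\colon \xmino \to \zmin$ denotes the resulting $\GG_m$-equivariant retraction. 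One checks first that $\xmino$, and then $X^s := \xmino \setminus U\zmin$, is $\hU$-invariant: for $x \in \xmino$ one has $\lim_{t\to 0}\lambda(t)(u\cdot x) = \big(\lim_{t\to 0}\lambda(t)u\lambda(t)^{-1}\big)\cdot p(x) = e\cdot p(x) \in \zmin$, using that conjugation by $\lambda(t)$ contracts $U$ to the identity as $t\to 0$ because the adjoint weights on $\Lie U$ are positive. The argument then proceeds in four steps: (1) produce a geometric $U$-quotient $\pi\colon X^s \to Y$; (2) embed $Y$ $\GG_m$-equivariantly into a projective variety $\overline{Y}$ with an ample $\GG_m$-linearisation; (3) use the (well-)adapted inequality together with the Hilbert--Mumford criterion for the $\GG_m$-action on $\overline{Y}$ to prove $\overline{Y}^{ss}=\overline{Y}^{s}=Y$; (4) apply classical GIT to the $\GG_m$-action on $\overline{Y}$ and match the resulting invariant section ring with the $\hU$-invariant section ring on $(X,L)$, which yields (i) and (ii) simultaneously.

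For step (1): the locus $U\zmin$ is deleted precisely so that on the complement $U$ acts freely with closed orbits --- this is the non-reductive analogue of passing from semistable to stable points. Concretely, the stabiliser hypothesis $\Stab_U(z)=\{e\}$ on $\zmin$, combined with the positive grading (which makes the discrepancy between the $\GG_m$-flow and its linear part on $V_{\min}$ ``small''), forces freeness on $X^s$; and the $\GG_m$-retraction $p$ exhibits $\xmino$ as a $\lambda$-equivariant bundle of affine spaces over $\zmin$, which one uses to build local trivialisations. Since over a field of characteristic zero $U$ is a successive extension of copies of $\GG_a$, it is a special group, so every $U$-torsor is Zariski-locally trivial; hence $\pi\colon X^s \to Y$ is a Zariski-locally trivial principal $U$-bundle over a quasi-projective variety $Y$, in particular a geometric quotient, and normality of $\GG_m$ in $\hU$ endows $Y$ with a residual $\hU/U\cong\GG_m$-action making $\pi$ equivariant.

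Steps (2)--(3) are the heart of the matter and the step I expect to be the main obstacle. The goal is a projective completion $\overline{Y}\supseteq Y$ carrying an ample $\GG_m$-linearisation whose GIT-(semi)stable locus is exactly $Y$. The natural candidate is $\Proj$ of a finitely generated graded subalgebra $S$ of the $U$-invariant section ring $R^U := \bigoplus_{k\geq 0} H^0(X,L^{\otimes k})^U$; the obstruction is that $R^U$ is in general \emph{not} finitely generated (the non-reductive analogue of Nagata's counterexample), so one cannot mimic the classical argument directly. The resolution --- which is exactly the innovation of Non-Reductive GIT in \cite{Berczi2020} --- is to exploit the grading $\GG_m$: after replacing $L$ by a power and twisting by a rational character so that the linearisation is well-adapted, i.e. $\omega_{\min}<0<\omega_{\min}+\epsilon<\omega_1$ for small $\epsilon>0$, one constructs enough $U$-invariants by an averaging/residue procedure over $U$ governed by the positive grading, obtaining a finitely generated $S$ with $\Proj S = \overline{Y}$ a projective completion of $Y$ to which the $\GG_m$-action extends compatibly with an ample linearisation. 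Granting this, the Hilbert--Mumford criterion for the $\GG_m$-action on $\overline{Y}$ identifies the unstable locus with the union of the two Bialynicki-Birula strata flowing (as $t\to 0$, respectively $t\to\infty$) into the non-positive, respectively non-negative, $\lambda$-fixed locus; the well-adapted inequality is precisely what forces these two strata to coincide with the boundary $\overline{Y}\setminus Y$, giving $\overline{Y}^{ss}=\overline{Y}^{s}=Y$ with $\GG_m$ acting with at most finite stabilisers on $Y$.

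For step (4): classical GIT for the $\GG_m$-action on $\overline{Y}$ now gives that $\overline{Y}\gitq\GG_m = \Proj\bigoplus_{k\geq 0}H^0(\overline{Y},\mathcal{O}(k))^{\GG_m}$ is projective, and since $\overline{Y}^{ss}=\overline{Y}^{s}=Y$ it is a geometric quotient of $Y$. Composing with the geometric $U$-quotient $\pi\colon X^s\to Y$ and using that the $\GG_m$-action on $Y$ is the residual $\hU/U$-action, we obtain that $\overline{Y}\gitq\GG_m$ is a projective geometric quotient of $X^s = \xmino\setminus U\zmin$ by $\hU$, which is (i) together with the set-theoretic identification $X\gitq\hU = X^s/\hU$. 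Finally, unwinding $H^0(X,L^{\otimes k})^{\hU}=\big(H^0(X,L^{\otimes k})^U\big)^{\GG_m}$ and the fact that in the well-adapted range the $U$-invariants missing from $S$ contribute nothing after taking $\GG_m$-invariants, one gets $\bigoplus_{k}H^0(X,L^{\otimes k})^{\hU}\cong\bigoplus_{k}H^0(\overline{Y},\mathcal{O}(k))^{\GG_m}$ up to a common Veronese; the latter is finitely generated by classical GIT and its $\Proj$ is $\overline{Y}\gitq\GG_m = X\gitq\hU$, which is (ii). Apart from the genuinely new point in steps (2)--(3) --- producing enough $U$-invariants via the grading to build $\overline{Y}$ --- every remaining ingredient (the $U$-torsor structure, Hilbert--Mumford for $\GG_m$, and the concluding classical GIT) is standard.
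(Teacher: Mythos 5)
Note first that the paper does not itself prove this theorem; it is cited directly from \cite[Thm 2.16]{Berczi2020}, and Section \ref{subsec:motivandsetup} only sketches the strategy. Measured against that sketch (and against the cited source) your proposal is essentially the right approach: quotient in stages, first forming a Zariski-locally trivial geometric $U$-quotient of a suitable open subset (using that $U$ is unipotent, hence special, so $U$-torsors are Zariski-locally trivial), then a $\GG_m$-equivariant projective completion $\overline{Y}$ of that $U$-quotient produced by a finite supply of $U$-invariant sections obtained thanks to the grading, and finally classical $\GG_m$-GIT on $\overline{Y}$ together with identifying the invariant section ring up to Veronese. You have also correctly isolated the genuine difficulty, namely that the full $U$-invariant ring need not be finitely generated, and that the well-adapted inequality is what makes the $\GG_m$-stable locus of $\overline{Y}$ equal to $Y$.

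Two points would need repair in a careful write-up. First, you attribute the removal of $U\zmin$ to making $U$ act freely; in fact the hypothesis $\Stab_U(z)=\{e\}$ on $\zmin$ already implies trivial $U$-stabilisers on all of $\xmino$ (stabilisers can only grow under $\lim_{t\to 0}\lambda(t)\cdot$), so $U$ acts freely on $\xmino$ itself. The locus $U\zmin$ is removed for the \emph{second} stage: it is the pullback of the $\GG_m$-unstable locus of $\overline{Y}$, and deleting it is what makes the classical $\GG_m$-quotient of $\overline{Y}$ geometric rather than merely good. Second, you describe $\xmino$ as a ``$\lambda$-equivariant bundle of affine spaces over $\zmin$'' and propose to use this to trivialise the $U$-action; that affine-bundle structure is the Bialynicki-Birula decomposition, which is available only when $X$ is smooth, whereas the theorem allows arbitrary irreducible projective $X$. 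The local-triviality argument in \cite{Berczi2020} proceeds instead by producing slices directly from the $\GG_m$-grading and does not rely on a BB affine-bundle structure; this should replace that step.
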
 

It follows from \thref{uhatthm} that the projective variety $X \gitq \hU$ satisfies all of the key properties of a classical GIT quotient in the case where semistability coincides with stability, as illustrated by Figure \ref{fig:comparison}. It is important to note however that by contrast with classical GIT where the (semi)stable locus $X^{(s)s}$ and projective variety $X \gitq G$ are always well-defined, in Non-Reductive GIT the additional condition that \eqref{ss=s(U)}, which should be viewed as the analogue of the condition in classical GIT that the semistable and stable loci coincide, is required to ensure that the stable locus $X^s$ and projective variety $X \gitq \hU$ are well-defined.

\subsubsection*{Extending the $\hU$-theorem to groups with internally graded unipotent radical.} The $\hU$-theorem can be combined with classical GIT to construct quotients for linear actions of \emph{internally graded} linear algebraic groups. These are linear algebraic groups $H = U \rtimes R$ (here $U$ denotes the unipotent radical) containing a grading $\GG_m$, that is, a one-parameter subgroup $\lambda:\GG_m \to Z(R)$ where $Z(R)$ denotes the centre of $R$ and such that the adjoint action of $\lambda(\GG_m)$ on $\operatorname{Lie} U$ has positive weights. 

Let $H = U \rtimes R$ be an internally graded linear algebraic group, and let $\lambda: \mathbb{G}_m \to Z(R)$ denote the grading one-parameter subgroup.  Suppose that $H$ acts linearly on an irreducible projective variety $X$ and that \eqref{ss=s(U)} is satisfied for the action of $\hU$ on $X$. Then a projective quotient for the action of $H$ on (an open subset of) $X$ can be constructed by quotienting in stages: first by the action of $\hU : = U \rtimes \lambda(\GG_m)$ on $X$, then by the action of $R_{\l}:= R /\lambda(\GG_m)$ on $X \gitq \hU$. The key for the second step is the fact that the projective variety $X \gitq \hU$ has an induced action of the reductive group $R_{\l}$ which can be linearised in such a way that the pull-back of this linearisation to $X$ under the quotient map coincides with a tensor power of the linearisation for the $\hU$-action on $X$ (after modifying the original linearisation according to the $\hU$-theorem). Thus we obtain a projective variety $X \gitq H$ given by $( X \gitq \hU) \gitq  R_{\l}$. By defining 
\begin{equation} \label{nrss} X^{(s)s}  := q_{\hU}^{-1}( (X \gitq \hU)^{(s)s}
\end{equation} where $q_{\hU}: \xmino \setminus U \zmin \to X \gitq \hU$ denotes the quotient map and $(X \gitq \hU)^{(s)s}$ the (semi)stable locus for the induced action of $R_{\l}$ on $X \gitq \hU$ , we have that $X \gitq H$ is a good quotient for the action of $H$ on $X^{ss}$ and that $(X \gitq \hU)^{s}/R_{\l} =  X^{ss} / H$ is a geometric quotient for the action of $H$ on $X^{s}$. Figure \ref{fig:gitingeneral}, which combines Figures \ref{fig:classicalgit1} and \ref{fig:unip1}, illustrates these results.

\begin{figure}
\centering
\begin{tikzpicture}[scale=0.9, x=(10:10cm),y=(0:5.2cm), z=(90:1.3cm),
    cross line/.style={preaction={draw=white,-,line width=3pt}},
    equal/.style={double distance=1pt}]

\node (c9) at (0,-1.35,2.5) {};
\node (c11) at (0,1.55,2.5) {};
\node (c10) at (0,-1.35,-1.5) {};
\node (c12) at (0,1.55,-1.5) { };
      
\node (8) at (0,-1,2) {\( X \)};
\node (9) at (0,0,2) {\( X \gitq \hU \)};
\node (10) at (0,1,2) {\( \left( X \gitq \hU \right)   \gitq R_{\l} =: X \gitq H \)};
\node (11) at (0,-1,1) {\(  X^{s} \)};
\node(11') at (0,-1,0) {\( X^{ss}\)};
\node (12') at (0,-1,-1) {\( X^{s} \)};
\node (12) at (0,0,1) {\( X^{s} / \hU \)};
\node (13) at (0,0,0) {\( \left(X \gitq \hU \right)^{ss} \)};
\node (14) at (0,0,-1) {\(\left(X \gitq \hU \right)^{s} \)};
\node (15) at (0,1,-1) {\( \left( X \gitq \hU \right)^{s} / R_{\l}  = X^{s} /H \)};

\draw  [black,loosely dotted] (c10) rectangle (c11);
\filldraw  [fill = orange, draw=black,loosely dotted, opacity=0.2] (c10) rectangle (c11);

\draw [right hook ->](11) -- (8);
\draw [equal](12) -- (9);
\draw [right hook ->] (12') -- (11');
\draw [right hook ->] (11') -- (11);
\draw [dashed,->] (8) -- (9);
\draw [dashed, ->] (9) -- (10);
\draw [right hook ->] (15) -- (10);
\draw [right hook ->] (13) -- (12);
\draw [right hook ->] (14) -- (13);
\draw [->] (14) -- (15);
\draw [->] (12') -- (14);
\draw [->] (13) -- (10);
\draw [->] (11') -- (13);

\draw[->](11) -- (12);

\end{tikzpicture}
\caption{GIT for linear algebraic groups $H = U \rtimes R$ with internally graded unipotent radical, when \eqref{ss=s(U)} is satisfied. The grading one-parameter subgroup is denoted by $\lambda:\GG_m \to Z(R)$, and $R_{\l}$ denotes the quotient $R / \lambda(\GG_m)$.}  
\label{fig:gitingeneral}
\end{figure}
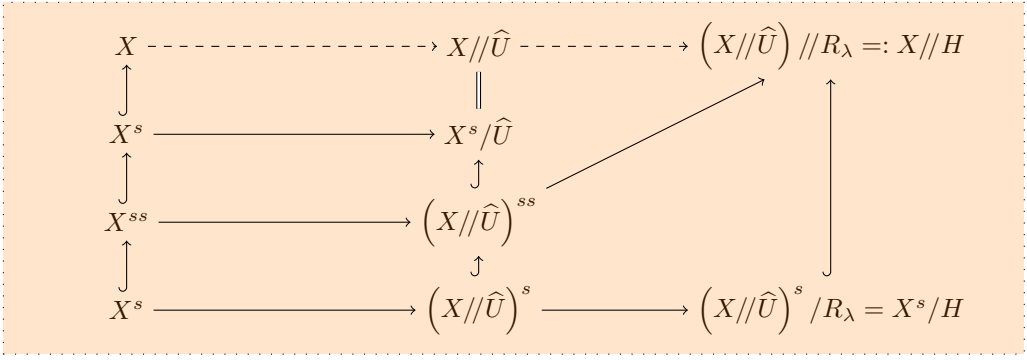

\subsubsection*{Hilbert-Mumford-type criterion for Non-Reductive GIT} As mentioned in Section \ref{subsec:motivandsetup}, an important feature of classical GIT is the Hilbert-Mumford criterion which provides a way of describing the semistable locus without having to compute invariants. That is, if a reductive group $G$ acts linearly on a projective variety $X$, then there is an equality $$X^{ss} = \bigcap_{g \in G} g X^{ss,T}$$ where $X^{ss,T}$ denotes the semistable locus for the restricted linear action of a maximal torus $T \subseteq G$. The advantage of this description is that semistable and stable loci for the action of tori can be computed in a combinatorial way. 

It is shown in \cite{Berczi2020} that an analogue of the Hilbert-Mumford criterion remains valid in Non-Reductive GIT (under the assumption that \eqref{ss=s(U)}). That is, \cite[Thm 2.16]{Berczi2020} establishes the analogous equality \begin{equation} X^{(s)} = \bigcap_{h \in H} h X^{(s)s,T} \label{hmcriterionH}
\end{equation}  for a fixed choice of maximal torus $T \subseteq R$.

\subsection{When `semistability does not coincide with stability'} \label{subsec:ssneqsnonred}

The $\hU$-theorem requires the assumption that $\eqref{ss=s(U)}$ is satisfied for the action of $\hU$ on $X$. 
If this condition is not satisfied, then at present it is not known whether there exists a projective GIT quotient that is a good quotient for the action of $\hU$ on an open subset of $X$ satisfying a Hilber-Mumford-type explicit description. 
Nevertheless, if \eqref{ss=s(U)} is not satisfied, then a construction analogous to the partial desingularisation construction of classical GIT can be applied to obtain a new variety with a linear $\hU$-action such that \eqref{ss=s(U)} is satisfied.

 In classical GIT, given the action of a reductive group $G$ on a projective variety $X$ such that the stable locus (assumed to be non-empty) is strictly contained in the semistable locus, the partial desingularisation construction of \cite{Kirwan1985} can be applied to produce a variety $\tX$ with a linear $G$-action such that semistability coincides with stability. The construction consists in a sequence of blow-ups of $X$ along loci of points with maximal dimension reductive stabiliser groups, which after a finite number of steps results in a variety $\tX$ with no semistable points fixed by a positive-dimensional reductive subgroup of $G$. This suffices to ensure that $\tX^{ss} = \tX^s$.

In Non-Reductive GIT, given the action of an externally graded unipotent group $\hU$ on an irreducible projective variety $X$, if \eqref{ss=s(U)} is not satisfied then a construction analogous to the partial desingularisation can be applied. 
The construction results in a variety $\hX$ with a linear $\hU$-action such that \eqref{ss=s(U)} is satisfied, as per \thref{uhatthm} below.

\begin{theorem}[$\hU$-theorem with blow-ups] \thlabel{uhatwbups}
Let $\hU$ and $X$ be as per \thref{uhatthm} above. 
If the condition \eqref{ss=s(U)} is not satisfied, but the condition that \begin{equation} \label{snonempty}
\text{there exists $z \in \zmin$ such that $ \operatorname{Stab}_U (x ) = \{e\}$} \tag{$\emptyset \neq s \subsetneq ss [\hU]$}
\end{equation} is satisfied, then there exists a sequence of blow-ups of $X$ along $\hU$-invariant closed subvarieties resulting in a projective variety $\hX$ with a linear action of $\hU$ for which the condition \eqref{ss=s(U)} is satisfied. Moreover, the blow-down map is an isomorphism over the subvariety $$X^{\widehat{s}} : = \left\{ x \in \xmino \setminus U \zmin \ \left| \ \operatorname{Stab}_U (x) = \{e\}\right\}\right.,$$ which admits a geometric $\hU$-quotient $X^{\widehat{s}} /U$, open inside $\hX \gitq \hU$.   
\end{theorem}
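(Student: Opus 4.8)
The plan is to transpose Kirwan's partial desingularisation of \cite{Kirwan1985} to the externally graded unipotent setting. The failure of \eqref{ss=s(U)} is measured by the closed, $\GG_m$-invariant subset
\[
\zmin^{\mathrm{bad}} := \{\, z \in \zmin \ :\ \Stab_U(z) \neq \{e\}\,\} \subseteq \zmin,
\]
and I would attach to the linearised action the numerical invariant given lexicographically by $d := \max_{z \in \zmin} \dim \Stab_U(z)$ followed by $\dim Z_{(d)}$, where $Z_{(d)} := \{ z \in \zmin : \dim \Stab_U(z) = d\}$; condition \eqref{ss=s(U)} holds precisely when $d = 0$. The strategy is to produce a single $\hU$-equivariant blow-up $\pi \colon X_1 \to X$ whose centre lies in the complement of $X^{\widehat{s}}$ and after which this invariant strictly decreases, and then to iterate finitely often until $d = 0$, at which point \thref{uhatthm} applies to the resulting variety $\hX$.

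For the blow-up step I would take as centre the $\hU$-invariant closed subvariety $C := \overline{U \cdot Z_{(d)}} \subseteq X$ (working component by component on $Z_{(d)}$ if necessary), form $X_1 := \Bl_C X$, and linearise with $L_1 := \pi^* L^{\otimes N} \otimes \mathcal{O}(-E)$ for the exceptional divisor $E$ and $N \gg 0$, chosen large enough that $L_1$ is very ample and, after a rational twist of the character, the induced $\GG_m$-action on $X_1$ is again adapted. One then has to recompute $Z(X_1)_{\min}$ and $(X_1)^0_{\min}$ for this new linearisation and verify that no point of $Z(X_1)_{\min}$ has a $U$-stabiliser of dimension $\geq d$ with bad locus of the same dimension; concretely, a point of $E$ lying over $z \in Z_{(d)}$ records a normal direction to $C$ at $z$, and one must show that moving in a generic such direction strictly shrinks the stabiliser, using that $C$ already contains the fixed loci of the top-dimensional stabilisers together with a local (Luna-type) analysis of the $U$-action on the normal cone, carried out via Grosshans/$U$-invariant theory since $U$ is not assumed abelian here. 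Because the invariant is bounded below by $0$ it must reach $0$ after finitely many steps.

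It remains to prove the statement about the blow-down. Each successive centre is contained in the closure $\overline{U\zmin}$ (respectively its strict transform), which is disjoint from $X^{\widehat{s}}$ because $X^{\widehat{s}} \subseteq \xmino \setminus U\zmin$ and $U\zmin$ is closed in $\xmino$; hence every point of $X^{\widehat{s}}$ has a neighbourhood on which each blow-up is an isomorphism, so the composite $\hX \to X$ restricts to an isomorphism over $X^{\widehat{s}}$. Under this identification $X^{\widehat{s}}$ is an open subset of $(\hX)^0_{\min} \setminus U\, Z(\hX)_{\min}$ consisting of points with trivial $U$-stabiliser, and part (i) of \thref{uhatthm} applied to $\hX$ exhibits the projective geometric quotient $\hX \gitq \hU$ together with the open embedding into it of the geometric $\hU$-quotient of $X^{\widehat{s}}$.

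\textbf{Main obstacle.} The crux is to control $\zmin$, $\xmino$ and the $U$-stabilisers \emph{simultaneously} under blow-up: these loci are defined through the $\GG_m$-weights on sections of the ample bundle, and these weights shift when one passes from $L$ to $\pi^* L^{\otimes N}\otimes\mathcal{O}(-E)$, so the choices of $N$, of the twisting character, and of the centre must be made compatibly and adaptedness re-established at every stage. Added to this, because $U$ is a general unipotent group the local structure of the stabilisers along the exceptional divisor cannot be read off from a linear computation but must be extracted from Grosshans-type arguments. Showing that the chosen numerical invariant genuinely drops under the blow-up is where essentially all of the work lies.
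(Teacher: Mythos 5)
Your construction is essentially the one the paper uses; note that the paper does not actually prove this theorem but quotes it from \cite{Berczi2020}, describing exactly your procedure (blow up along the closure of the locus of maximal-dimensional stabilisers in $\xmino$, re-linearise by $\pi^*L^{\otimes N}\otimes\mathcal{O}(-\epsilon E)$, iterate) and outsourcing the key descent statement to \cite[Prop 8.8]{Berczi2020}. Your centre $\overline{U\cdot Z_{(d)}}$ agrees with the paper's $\overline{C_{\operatorname{max}}(\xmino,\hU)}$ via the identity $C_{\operatorname{max}}(\xmino,\hU)=C_{\operatorname{max}}(\xmino,U)\cap U\zmin$, which is established later in the proof of \thref{smoothnessofbulocus}, and your argument that the centres avoid $X^{\widehat{s}}$ (so the blow-down is an isomorphism there and \thref{uhatthm} applied to $\hX$ yields the open geometric quotient) is the intended one. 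The one place you genuinely diverge is the termination argument: you only aim to show a lexicographic invariant (maximal stabiliser dimension, then the dimension of the top stratum) decreases, whereas the cited result is stronger -- a single blow-up strictly decreases the maximal $U$-stabiliser dimension on the new $\xmino$, so no secondary invariant is needed. What you flag as the ``main obstacle'', namely proving the descent by analysing the $U$-action on the normal cone to the centre, is precisely the content of \cite[Prop 8.8]{Berczi2020}; since neither your proposal nor the paper supplies that argument, your write-up is complete exactly to the extent that the paper's is, and would need that proposition (or a proof of it) to close the gap.
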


The `hat' superscript in the notation $X^{\widehat{s}}$ indicates that the quasi-projective geometric quotient of $X^{\widehat{s}}$ by the action of $\hU$ has an explicit projective completion constructed from a blow-up of $\hX$ of $X$. Figure \ref{fig:comparisonssneqsuhat} illustrates the above \thref{uhatwbups}, while Figure \ref{fig:comparisonssneqsred} illustrates the partial desingularisation construction of classical GIT, to allow a visual comparison of the two. As noted above, the only notable difference is that in classical GIT, even when semistability does not coincide with stability, a projective GIT quotient exists (the quotient $X \gitq G$ appearing in the top right-hand corner of the orange rectangle), whereas in Non-Reductive GIT, if the condition \eqref{ss=s(U)} is not satisfied, then it is not known at present whether an analogous projective GIT quotient always exists (as indicated by the absence of a quotient in the top right-hand corner of the red rectangle).

As we will need an explicit description of the centres of the blow-ups referred to in \thref{uhatwbups} to show that they are smooth in Section \ref{sec:cohomssneqspart2}, we describe the construction below. 

\subsubsection*{The blow-up construction for externally graded unipotent groups.} Let $\hU$ be an externally graded unipotent group acting linearly on an irreducible projective variety $X$ such that \eqref{snonempty} is satisfied. We start by introducing the notation which we will use to denote the centres of the blow-ups (we will use this notation in Section \ref{sec:cohomssneqspart2} as well).  

\begin{notation} \thlabel{cmaxnotation} 
Given the action of a group $H$ on a variety $Y$, for each $d \in \mathbb{N}$ we define $$C_d(Y,H) : = \{ y \in Y | \operatorname{dim} \operatorname{Stab}_H (y) = d\}$$ 
and $$d_{\operatorname{max}}(Y,H) : = \operatorname{max} \{\operatorname{dim} \operatorname{Stab}_{H} (y) \ | \ x \in Y \}.$$ Moreover, we let $C_{\operatorname{max}}(Y,H): = C_{d_{\operatorname{max}}(Y,H)}(Y,H)$ to simplify notation. 
\end{notation}

\begin{rk}[Closedness of $C_{\operatorname{max}}(\xmino,\hU)$ in $\xmino$]
Note that $C_{\operatorname{max}}(\xmino,\hU)$ is closed in $\xmino$, by standard result regarding upper semi-continuity of dimensions (in this case applied to stabiliser dimension), refer to \cite[\S 13.1]{Grothendieck1960-1961a} for example. 
\end{rk} 

The first step of the blow-up construction is to blow $X$ up along the closure of $C_{\operatorname{max}}(\xmino , \hU)$ in $X$. The key result is that the maximal dimension of stabiliser groups for points in the $\xmino$ for the blown-up space is strictly smaller than that for $X$ (see \cite[Prop 8.8]{Berczi2020}\footnote{The blow-up construction used in \cite{Berczi2020} consists in blowing $X$ up along the closure of $C_{\operatorname{max}}(\xmino , U)$ rather than that of $C_{\operatorname{max}}(\xmino , \hU)$. Nevertheless the results obtained for the former construction apply to the latter as well.}). Thus by repeating this procedure finitely many times we obtain a variety $\hX$ for which \eqref{ss=s(U)} is satisfied.

\begin{figure}[h]
\begin{subfigure}{1 \textwidth}
\centering
		\begin{tikzpicture}[scale=0.9,x=(15:6.5cm),y=(0:4cm), z=(90:1.8cm),
    cross line/.style={preaction={draw=white,-,line width=3pt}},
    equal/.style={double distance=1pt}]
    

\node (c1) at (1,-1.3,-0.25) {  } ;
\node (c2) at (1,-1.3,1.6) { };
\node (c3) at (1,0.35,-0.25) { };
\node (c4) at (1,0.35,1.6) { };
\node (c5) at (0,-1.3,-0.25) { } ;
\node (c6) at (0,-1.3,1.6) { } ;
\node (c7) at (0,0.35,-0.25) { } ;
\node (c8) at (0,0.35,1.6) { } ;

\node (1) at (0,-1,1.4) {\( X \)};
\node (1') at (0,0,1.4) {\( X \gitq G \)};
\node (2) at (0,-1,0.7) {\( X^{ss} \)};
\node (2') at (0,-1,0) {\( X^{s}  \)};
\node (3) at (0,0,0) {\( X^{s} / G \)};
\node (16) at (1,0,1.4) {\(   \tX \gitq G  \)};
\node(19) at (1,-1,1.4) {\( \tX \)};
\node(19') at (1,0,0.7) {\( \tX^{s} / G \)};
\node(20) at (1,-1,0.7) {\(  \tX^{ss} = \tX^{s}  \)};
\node(21) at (1,-1,0) {\( \tX^{ss} \setminus \widetilde{E} \)}; 
\node(22) at (1,0,0) {\( \left(\tX^{ss} \setminus \widetilde{E} \right) / G \)}; 
     
\filldraw  [fill = orange, draw=black,loosely dotted, opacity=0.2] (c5) rectangle (c8);
\filldraw  [fill = yellow, draw=black,loosely dotted, opacity=0.2] (c1) rectangle (c4);
  
\draw [black,loosely dotted] (c1) rectangle (c4);
\draw  [black,loosely dotted] (c5) rectangle (c8);
\draw [black, loosely dotted] (c3) -- (c7);
\draw [black, loosely dotted] (c4) -- (c8);
\draw [black, loosely dotted] (c1) -- (c5); 
\draw [black, loosely dotted] (c2) -- (c6);
     
\draw [->, dashed] (1) -- (1');
\draw [right hook ->](2) -- (1);
\draw [->] (2) -- (1');
\draw [right hook ->] (2') -- (2);
\draw [ ->](2') -- (3);
\draw [->] (21) -- (2') node[pos=0.5,below] {$\cong$}; 
\draw [right hook ->] (21) -- (20);
\draw [->] (21) -- (22) ;
\draw [->] (22) -- (3) node[pos=0.5,below] {$\cong$} ;
\draw [right hook -> ] (22) -- (19');
\draw [dashed, ->] (19) -- (16);
\draw[->] (20) -- (19');
\draw [equal] (19') -- (16);
\draw [right hook ->] (20) -- (19);
\draw [->] (19) -- (1) node[pos=0.5,above] {$\widetilde{\pi}$} ;
 
\draw [ForestGreen,line width=0.5mm, right hook ->, cross line] (3) -- (16);
\draw [ForestGreen, line width=0.5mm,right hook ->, cross line ] (3) -- (1');
    
		\end{tikzpicture}

	\caption{The partial desingularisation construction of classical GIT, when the condition that $X^{ss} = X^{s}$ is not satisfied for the linear action of a reductive group $G$ on a projective variety $X$. The variety $\tX$ is a blow-up of $X$, with exceptional divisor denoted by $\widetilde{E}$, and has an induced linear action of $G$.
} \label{fig:comparisonssneqsred} 
		
\end{subfigure}

\begin{subfigure}{1 \textwidth}

  \centering
       \begin{tikzpicture}[scale=0.85,x=(15:7cm),y=(0:4.3cm), z=(90:1.2cm),
    cross line/.style={preaction={draw=white,-,line width=3pt}},
    equal/.style={double distance=1pt}]
    
\node (c1) at (1,-1.3,-0.4) {  } ;
\node (c2) at (1,-1.3,2.4) { };
\node (c3) at (1,0.35,-0.4) { };
\node (c4) at (1,0.35,2.4) { };
\node (c5) at (0,-1.3,-0.4) { } ;
\node (c6) at (0,-1.3,2.4) { } ;
\node (c7) at (0,0.35,-0.4) { } ;
\node (c8) at (0,0.35,2.4) { } ;

\node (1) at (0,-1,2) {\( X \)};
\node (2') at (0,-1,0) {\( X^{\widehat{s}}  \)};
\node (3) at (0,0,0) {\( X^{\widehat{s}} / 
\hU \)};
\node (16) at (1,0,2) {\(   \hX \gitq \hU \)};
\node(19) at (1,-1,2) {\( \hX \)};
\node(20) at (1,-1,1) {\(   \hX^{s}   \)};
\node (19') at (1,0,1) {\( \hX^{s}  / \hU \)};
\node (21) at (1,-1,0) {\( \hX^{s} \setminus \widehat{E} \)}; 
\node (22) at (1,0,0) {\( \left(\hX^{s} \setminus \widehat{E} \right) / \hU \)}; 
     
\filldraw  [fill = red, draw=black,loosely dotted, opacity=0.2] (c5) rectangle (c8);
\filldraw  [fill = yellow, draw=black,loosely dotted, opacity=0.2] (c1) rectangle (c4);
  
\draw [black,loosely dotted] (c1) rectangle (c4);
\draw  [black,loosely dotted] (c5) rectangle (c8);
\draw [black, loosely dotted] (c3) -- (c7);
\draw [black, loosely dotted] (c4) -- (c8);
\draw [black, loosely dotted] (c1) -- (c5); 
\draw [black, loosely dotted] (c2) -- (c6);

\draw [equal] (16) -- (19');
\draw [right hook ->] (2') -- (1);
\draw [dashed, ->] (19) -- (16);
\draw[right hook ->] (20) -- (19);
\draw [->] (20) -- (19');
\draw [right hook ->] (21) -- (20);
\draw [->] (21) -- (22);
\draw [right hook ->] (22) -- (19') ;
\draw  [->] (21) -- (2') 
node[midway, below] {$\cong$};
\draw [->] (22) -- (3) 
node[midway, below] {$\cong$}; 
\draw [->] (2') -- (3);
\draw [->] (19) -- (1) node[midway,above] {$\widehat{\pi}$} ;
      \draw [ForestGreen,line width=0.5mm,right hook ->, cross line] (3) -- (16);
   		\end{tikzpicture} \caption{The $\hU$-theorem with blow-ups, when \eqref{ss=s(U)} is not satisfied for the linear action of an externally graded unipotent group $\hU$ on an irreducible projective variety $X$. The variety $\hX$ is obtained through a sequence of $\hU$-equivariant blow-ups of $X$, with exceptional divisor denoted by $\widehat{E}$. The green arrow denotes a projective completion} \label{fig:comparisonssneqsuhat}
            \end{subfigure}
            \caption{Comparison of GIT for reductive and for externally graded unipotent groups when the condition that `semistability coincides with stability' is not satisfied.} 
            \label{fig:comparisonssneqs}
       \end{figure}
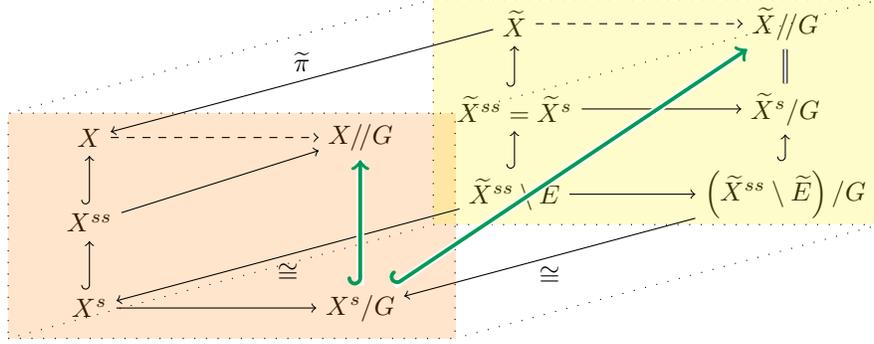
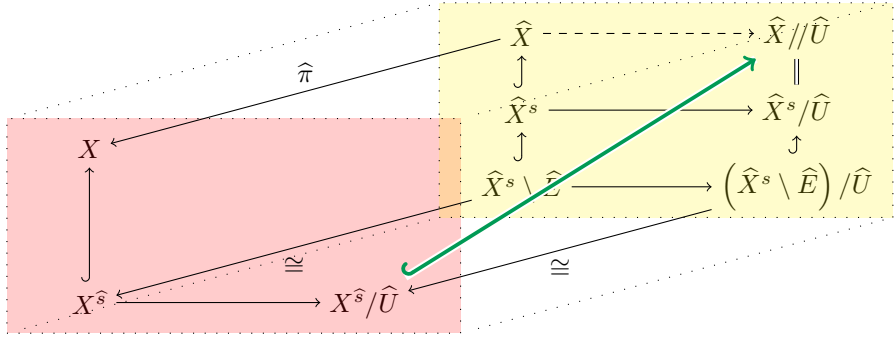

\subsubsection*{The blow-up construction for 
groups with internally graded unipotent radical.} The blow-up construction of \thref{uhatwbups} can be applied to enable the construction of quotients by linear algebraic groups $H$ with internally graded unipotent radical when \eqref{ss=s(U)} is not satisfied, by the method of quotienting in stages (we let $\hU$ denote the semi-direct product $U \rtimes \lambda(\GG_m)$ where $\lambda$ is the grading one-parameter subgroup): first by applying the $\hU$-theorem with blow-ups to the action of $\hU \subseteq H$ on $X$ (note that the blow-ups are not just $\hU$-equivariant but also $H$-equivariant, a necessary condition for the construction to apply in this more general case) and then by applying classical GIT to the action of $R/\lambda(\GG_m)$ on $\hX \gitq \hU$ (noting that this quotient has a suitable induced linear action of $R/\lambda(\GG_m)$).

\section{Smoothness of non-reductive fixed point sets} \label{sec:smoothness}

In this section we prove \thref{mainsmoothnessresult}. 
 We proceed in three steps. In Section \ref{subsec:basecase} we prove the result in the special case where $H  = \GG_a \rtimes \GG_m$ (see \thref{induction}). In Section \ref{subsec:Uhat} we extend the proof of \thref{induction} to establish the result in the case where $H = \hU $ is an externally graded unipotent group (see \thref{proofinspecialcase}). Finally in Section \ref{subsec:generalH} we show how the result for general $H$ follows from \thref{mainsmoothnessresult} and from known results regarding smoothness in the reductive case (see \thref{mainsmoothnessresultbody}). 

\subsection{The simplest non-reductive case: when $H = \GG_a \rtimes \GG_m$} \label{subsec:basecase} 

In this section we prove \thref{mainsmoothnessresult} in the special case where $H = \GG_a \rtimes \GG_m$. Although the proof we give in Section \ref{subsec:Uhat} of the result when $H  = \hU$ does not rely on the result in the special case, in the sense that we do not prove prove the more general result by induction on the dimension of $U$, the proof is nevertheless a direct generalisation of the proof in the special case. In particular all of the key ideas are already contained in the latter. For this reason we have chosen to present it separately in this section. The result in this special case is the following

\begin{theorem}[Smoothness of fixed point sets when $H = \GG_a \rtimes \GG_m$] \thlabel{induction}
Let $\hU = \GG_a \rtimes \GG_m$ where $\GG_m$ acts on $\operatorname{Lie} \GG_a$ with a single weight. Suppose that $\hU$ acts linearly on an irreducible projective variety $X$. Then $X^{\hU} \cap \xmino$ is smooth at any point at which $X$ is smooth.  In particular, if $X$ is smooth then $X^{\hU} \cap \xmino$ is smooth.\end{theorem}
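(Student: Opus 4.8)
The plan is to work locally near a fixed point $x \in X^{\hU} \cap \xmino$ at which $X$ is smooth, and to exploit the retraction $p : \xmino \to \zmin$ together with the explicit structure of $\hU = \GG_a \rtimes \GG_m$. Since $x$ is fixed by $\GG_m$, it lies in $\zmin$, and because $X$ is smooth at $x$ the torus $\GG_m$ acts linearly on the Zariski tangent space $T_x X$, decomposing it into weight spaces $T_x X = \bigoplus_{j} (T_x X)_j$. The condition $x \in \xmino$ forces all weights $j$ appearing in $T_xX$ to be $\geq 0$ (the positive-weight part is the normal direction to $\zmin$ and the zero-weight part is $T_x \zmin$). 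The one-parameter subgroup $\GG_a$ is normalised by $\GG_m$ and its Lie algebra sits in a single positive weight, say weight $a > 0$; I would like to use this to show that the scheme-theoretic fixed locus $X^{\hU}$ is cut out near $x$ by equations whose linear parts are transverse in a controlled way.

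First I would set up local coordinates: choose a $\GG_m$-equivariant formal (or \'etale) chart at $x$, so that $X$ is identified near $x$ with a smooth subscheme of $T_x X$, or better, since $X$ is smooth at $x$, with an open subset of affine space with linear $\GG_m$-action and with $\GG_a$ acting by a polynomial vector field $\xi$ of the form $\xi = \sum (\text{homogeneous pieces raising weight by } a)$. The key structural input is that $X^{\GG_m}$ is smooth at $x$ (classical reductive result, cited in the excerpt), so after passing to the smooth subscheme $X^{\GG_m}$ near $x$ we may assume $\GG_m$ acts trivially; but then $\GG_a$ must also act trivially on $X^{\GG_m}$ near $x$ because the generating vector field $\xi$ strictly raises $\GG_m$-weight by $a>0$ and hence vanishes identically on the zero-weight locus. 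Thus $X^{\hU} \cap \xmino$ agrees, near $x$, with $X^{\GG_m} \cap \xmino$, which is open in the smooth scheme $X^{\GG_m}$ and hence smooth at $x$. That is, I expect the proof to reduce to the slogan: on $\xmino$ the $\GG_m$-fixed locus already absorbs all of the $\GG_a$-fixedness, because $\GG_a$'s infinitesimal action is ``purely positive weight.''

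More carefully, the argument has to handle the scheme structure on $X^{\hU}$, not just the underlying set, so I would phrase it via the functor of points / Fogarty's description of fixed-point schemes: $X^{\hU} = (X^{\GG_m})^{\GG_a}$ since $\GG_a$ is normal in $\hU$ with reductive quotient $\GG_m$, and then compute $(X^{\GG_m})^{\GG_a}$ using the comment above about the vanishing of the action vector field on the zero-weight part. Concretely, restricting the $\GG_a$-action to the smooth scheme $X^{\GG_m}$, the induced action is a morphism $\GG_a \times X^{\GG_m} \to X^{\GG_m}$ which is $\GG_m$-equivariant for the trivial $\GG_m$-action on $X^{\GG_m}$ and the weight-$a$ action on $\GG_a$; equivariance for a nontrivial character on the source and trivial on the target forces the action to factor through $\{0\} \times X^{\GG_m}$, i.e.\ the $\GG_a$-action on $X^{\GG_m}$ is trivial, so $(X^{\GG_m})^{\GG_a} = X^{\GG_m}$ scheme-theoretically. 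Intersecting with the open set $\xmino$ gives the claim.

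The main obstacle I anticipate is the careful verification that ``$\GG_a$ acts trivially on $X^{\GG_m}$'' holds \emph{scheme-theoretically} near $x$ and globally on the relevant locus, i.e.\ that there are no nilpotent subtleties and that the $\GG_m$-equivariance argument for the action morphism is valid in families over an arbitrary $k$-scheme; one must be slightly careful because $X^{\GG_m}$ need not be connected and the weight-$a$ character of $\GG_m$ on $\GG_a$ must genuinely be nonzero (this is where the hypothesis that $\GG_m$ acts on $\Lie\GG_a$ with a single \emph{nonzero} weight enters, and why the statement is about $\hU = \GG_a \rtimes \GG_m$ rather than $\GG_a$ alone). A secondary point to get right is that $x \in \xmino$ together with smoothness of $X$ at $x$ really does guarantee $x \in \zmin$ and that $\xmino$ meets $X^{\GG_m}$ in an open subset of $X^{\GG_m}$ — this is standard Bialynicki-Birula theory but should be stated precisely. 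Once these points are nailed down, the smoothness of $X^{\hU} \cap \xmino$ at $x$ is immediate from the smoothness of $X^{\GG_m}$ at $x$, which in turn follows from Iversen/Fogarty since $\GG_m$ is reductive and $X$ is smooth at $x$.
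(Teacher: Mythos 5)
Your reduction breaks down at its central step: the claim that $\GG_a$ acts trivially on $X^{\GG_m}$ (equivalently, that the generating vector field $\xi$ vanishes on the zero-weight locus) is false. The vector field $\xi$ of the $\GG_a$-action transforms with weight $a>0$ under $\GG_m$, which at a $\GG_m$-fixed point $x$ only forces $\xi(x)$ to lie in the weight-$a$ eigenspace of $T_xX$ — not to vanish. Concretely, take $X=\mathbb{P}^2$ with $t\cdot[x_0{:}x_1{:}x_2]=[x_0{:}tx_1{:}t^2x_2]$ and $u\cdot[x_0{:}x_1{:}x_2]=[x_0{:}x_1+ux_0{:}x_2+2ux_1+u^2x_0]$ (this is essentially the representation $\widetilde{\rho}$ used in the paper). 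The point $[1{:}0{:}0]$ lies in $\zmin\subseteq X^{\GG_m}\cap\xmino$ but $u\cdot[1{:}0{:}0]=[1{:}u{:}u^2]$, so it is not $\GG_a$-fixed; here $X^{\hU}\cap\xmino$ is a proper closed subscheme of $X^{\GG_m}\cap\xmino$. Your equivariance argument does not yield a contradiction either: for $x\in X^{\GG_m}$ one has $t\cdot(u\cdot x)=(t^au)\cdot x$, so the orbit map $\GG_a\to X$ is $\GG_m$-equivariant for the weight-$a$ scaling on the source, and such maps are nonconstant in general (as in the example, $u\mapsto[1{:}u{:}u^2]$). A related slip: since it is $\GG_a$, not $\GG_m$, that is normal in $\hU$, the group $\GG_a$ need not even preserve $X^{\GG_m}$, so the morphism $\GG_a\times X^{\GG_m}\to X^{\GG_m}$ you invoke does not exist; the correct staging is $X^{\hU}=(X^{\GG_a})^{\GG_m}$, which does not help because $X^{\GG_a}$ is exactly the set whose smoothness (on $\xmino$) is at issue.

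Because $X^{\hU}\cap\xmino=X^{\GG_a}\cap\zmin$ is genuinely smaller than $\zmin$ in general, the theorem cannot be deduced from smoothness of $X^{\GG_m}$ alone, and some real work is unavoidable. The paper's proof takes a different route: it builds an auxiliary projective variety $Y$ as a non-reductive GIT quotient of $W\times X$ (for an explicit $3\times3$-matrix variety $W$ on which \eqref{ss=s(U)} holds), shows that a point $z\in\zmin$ is $\GG_a$-fixed if and only if an associated point of $Y$ is fixed by a maximal torus $T\subseteq\GL(2;k)$, and then transfers smoothness of $Y^T$ (from the Iversen--Fogarty reductive result) back to $X^{\GG_a}\cap\zmin$ via a local product decomposition of the quotient map. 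If you want to salvage a local-coordinate approach, you would need to analyse the positive-weight components of $\xi$ honestly rather than arguing they vanish.
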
 

Our proof of \thref{induction} is based on an idea at the heart of Non-Reductive GIT, that of reducing to the case of classical GIT. Applied in the present context, this idea consists in proving the smoothness of $X^{\hU} \cap \xmino$ by reducing to showing that a reductive fixed point set is smooth.
To do so, we will construct from $X$ an auxiliary variety $Y$ with an action by a torus $T$ (hence reductive), with the property that each point $x \in X^{\hU} \cap \xmino$ has an associated point $y \in Y^T$ such that if $X$ is smooth at $x$, then $Y$ is smooth at $y$, and such that smoothness of $Y^T$ at $y$ implies smoothness of $X^{\hU} \cap \xmino$ at $x$. These implications will suffice to prove that if $X$ is smooth 
at $x \in X^{\hU} \cap \xmino$ then $X^{\hU } \cap \xmino$ is smooth at $x$. 

The construction of $Y$ which we will give in the proof of \thref{induction} relies on considering the following representations of $\hU$ and of $\GL(2;k)$. 

\subsubsection*{Representations of $\hU$ and of $\GL(2;k)$.} \label{setupforfinitecover}
Let $\hU : = \GG_a \rtimes \GG_m$ where $\GG_m$ acts with a positive weight $w$ on $\operatorname{Lie} \GG_a$ via the adjoint action. The coadjoint action of $\hU$ on $(\operatorname{Lie} \hU)^{\vee}$ gives a representation $\rho: \hU \to \operatorname{GL}((\operatorname{Lie} \hU)^{\vee})$. The restriction of $\rho$ to $\GG_a$ is an embedding because of the positive grading of $\GG_m$ on $\operatorname{Lie} \GG_a$. Moreover, we can choose a basis for $(\operatorname{Lie} \hU)^{\vee}$ to obtain an isomorphism $\operatorname{GL}((\operatorname{Lie} \hU)^{\vee}) \cong \GL(2;k)$ such that for $(u,t) \in \hU$, we have that $$\rho(u,t) = \begin{pmatrix} 
1 & u \\
0 & t^{-w}
\end{pmatrix}.$$ Here we have used that $\GG_m$ acts trivially on $(\operatorname{Lie} \GG_m)^{\vee}$ since $\GG_m$ is abelian. 

Let $\sigma: \GL(2) \to \GL(\operatorname{Sym}^{2} (k^2)) $ denote the standard representation of $\GL(2;k)$ on $\operatorname{Sym}^{2} (k^2)$, which under a suitable identification of $\GL(\operatorname{Sym}^{2} (k^2))$ with $\GL(3;k)$ is given by \begin{equation} \sigma \left(
 \begin{pmatrix} 
a & b \\
c & d 
\end{pmatrix}\right)  := \begin{pmatrix}  a^2 & 2 ab & b^2 \\
ac & bc+ad & bd \\
c^2 & 2 cd & d^2
\end{pmatrix}. \label{gl2rep}
\end{equation}
We let $\widetilde{\sigma}: \operatorname{Mat}_{2 \times 2}(k) \to \operatorname{Mat}_{3 \times 3}(k)$ denote the natural extension of this map. 

The composition $\sigma \circ \rho$ gives a representation of $\hU$ on $\operatorname{Sym}^{2} (k^2)$, and we consider the representation obtained by twisting the representation $\sigma \circ \rho$ by the character of $\rho(\hU)$ corresponding to the restriction of the determinant character of $\GL(2;k)$. This ensures that the image of $\sigma \circ \rho$ lies in $\SL(3;k)$ after twisting. We let $\widetilde{\rho}$ denote the resulting representation of $\hU$ on $\operatorname{Sym}^2 (k^2) \cong k^3$; it is given by \begin{equation}
\widetilde{\rho}
 (u,t) = \begin{pmatrix}   t^w & 2 u t^w & u^2 t^w \\
0 & 1 & u \\
0 & 0 & t^{-w} \end{pmatrix}. \label{uhatrep}
\end{equation}

The above representations play a key role in defining the auxiliary variety $Y$ constructed from $X$ and used to reduce the non-reductive fixed point set in $X$ to a reductive fixed point set in $Y$. 

\subsubsection*{The auxiliary variety $Y$.} The auxiliary variety $Y$ is constructed as a non-reductive GIT quotient for the action of $\hU$ on the product $X'$ of $X$ with a projective variety $W$ admitting a $\hU$-action which is defined as follows. 

The representation $\widetilde{\rho}: \hU \to \operatorname{GL}(\operatorname{Sym}^2(k^2)) \cong \operatorname{GL}(3;k)$ defined in \eqref{uhatrep} induces an action of $\hU$ on the vector space $\operatorname{End} ( \operatorname{Sym}^2 (k^2))$, which we identify with $\operatorname{Mat}_{3 \times 3}(k)$: the action is given by $(u,t) \cdot M = M \widetilde{\rho}(u,t)^{-1}$ for any $(u,t) \in \hU$ and any matrix $M \in V$. 

The projective completion $\mathbb{P}(\operatorname{Mat}_{3 \times 3}(k) \oplus k)$ of $\operatorname{Mat}_{3 \times 3}(k) $ admits a linear action of $\hU$ given by $(u,t) \cdot [M:v] = [M \widetilde{\rho}(u,t)^{-1}: t v ]$ for any $(u,t) \in \hU$ and $[M:v ] \in \mathbb{P}(\operatorname{Mat}_{3 \times 3}(k) \oplus k)$. We consider the closed and smooth subvariety $W$ of $\mathbb{P}(\operatorname{Mat}_{3 \times 3}(k) \oplus k)$ defined by restricting $\operatorname{Mat}_{3 \times 3}(k)$ to its subvariety consisting of matrices of the form \begin{equation} M = \begin{pmatrix} a^2 & 2 ab & b^2 \\
0 & 0 & 0 \\
0 & 0 & 0 \\
\end{pmatrix}. \label{choiceofM}
\end{equation} Note that such matrices represent the image under $\widetilde{\sigma}: \operatorname{Mat}_{2 \times 2} (k) \to \operatorname{Mat}_{3 \times 3} (k)$, the extension of the representation $\sigma: \GL(2;k) \to \GL(3;k)$ defined at \eqref{uhatrep}, of the subset of matrices with vanishing bottom row. 
This closed subvariety $W$ of $\mathbb{P}(\operatorname{Mat}_{3 \times 3}(k) \oplus k)$ is $\hU$-invariant and thus has an induced linear action of $\hU$ obtained by restricting that on $\mathbb{P}(\operatorname{Mat}_{3 \times 3}(k) \oplus k)$.

Now let $X' := W \times X.$ Then $X'$ has a natural $\hU$-action induced by that on $W$ and on $X$, and we can consider the linearisation of this $\hU$-action given by taking the tensor product of the pull-back to $X'$ of the linearisation on $W$ with the pull-back to $X'$ of the linearisation on $X$. We wish to define $Y$ as the non-reductive GIT quotient $Y \gitq \hU$ associated to the linear action of $\hU$ on $X' : = W \times X$ induced by that on $W$ and on $X$. By the results of Section \ref{sec:reviewofnrgit}, for this quotient to be well-defined we must have that the condition \eqref{ss=s(U)} is satisfied. The following lemma shows that this is the case.

\begin{lemma}[The quotient $Y : = X'\gitq \hU$ is well-defined] \thlabel{ss=sissatisfied}
The linear action of $\hU$ on $X' = W \times X$ induced by the linear action of $\hU$ on $W$ and $X$ satisfies the condition \eqref{ss=s(U)}. 
\end{lemma}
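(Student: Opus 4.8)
The plan is to verify $\eqref{ss=s(U)}$ for the $\hU$-action on $X'=W\times X$ by first reducing, via the product structure, to the analogous statement for $W$ alone, and then settling that by an explicit computation --- the variety $W$ having been built precisely so that $Z(W)_{\min}$ is a single point carrying a free $\GG_a$-action. Since the linearisation on $X'$ is the external tensor product of the linearisations on $W$ and on $X$, the associated $\GG_m$-representation is $V_W\otimes V_X$ with $W\subseteq\PP(V_W)$, $X\subseteq\PP(V_X)$, and $X'$ embedded via the Segre map. The minimal $\GG_m$-weight on $V_W\otimes V_X$ is the sum of the minimal weights on the two factors, and the minimal-weight subspace of $V_W\otimes V_X$ equals $(V_W)_{\min}\otimes(V_X)_{\min}$; as a nonzero pure tensor $w\otimes x$ lies in $(V_W)_{\min}\otimes(V_X)_{\min}$ if and only if $w\in(V_W)_{\min}$ and $x\in(V_X)_{\min}$, we obtain $Z(X')_{\min}=Z(W)_{\min}\times Z(X)_{\min}$. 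Because $\Stab_{\GG_a}(w,x)=\Stab_{\GG_a}(w)\cap\Stab_{\GG_a}(x)$, it then suffices to show $\Stab_{\GG_a}(w)=\{e\}$ for all $w\in Z(W)_{\min}$.

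To identify $Z(W)_{\min}$, restrict the representation $\widetilde{\rho}$ of \eqref{uhatrep} to $\GG_m$: this gives $\widetilde{\rho}(0,t)=\operatorname{diag}(t^w,1,t^{-w})$, so the $\GG_m$-action $(0,t)\cdot[M:v]=[M\widetilde{\rho}(0,t)^{-1}:tv]$ on $W$ scales the entries $M_{11},M_{12},M_{13}$ of a matrix $M$ as in \eqref{choiceofM} with weights $-w,0,w$ and scales $v$ with weight $1$. Since $w>0$, the unique minimal weight is $-w$ and it is attained only on $M_{11}$, so $\PP\bigl((V_W)_{\min}\bigr)$ is the single point $[E_{11}:0]$, which indeed lies in $W$ (take $a=1$, $b=0$, $v=0$); hence $Z(W)_{\min}=\{[E_{11}:0]\}$. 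Finally, for $u\in\GG_a$ one has $\widetilde{\rho}(u,1)^{-1}=\widetilde{\rho}(-u,1)$, whose first row is $(1,-2u,u^2)$, so $(u,1)\cdot[E_{11}:0]$ is the class of the matrix with first row $(1,-2u,u^2)$ and zeros elsewhere (with $v=0$); this equals $[E_{11}:0]$ in $\PP(\operatorname{Mat}_{3\times 3}(k)\oplus k)$ if and only if $(1,-2u,u^2)$ is proportional to $(1,0,0)$, i.e.\ if and only if $u=0$. Thus $\Stab_{\GG_a}([E_{11}:0])=\{e\}$, which together with the reduction above gives $\eqref{ss=s(U)}$ for the $\hU$-action on $X'$.

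There is no genuine obstacle in this argument: all the content lies in tracking the $\GG_m$-weights on $W$, and the proof is driven by the observation that the special shape \eqref{choiceofM} of the matrices defining $W$ forces $(V_W)_{\min}$ to be one-dimensional and spanned by a vector on which $\GG_a$ acts without fixed points.
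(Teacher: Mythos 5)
Your proof is correct and follows the same route as the paper: reduce via $Z(X')_{\min}=Z(W)_{\min}\times Z(X)_{\min}$ to showing that points of $Z(W)_{\min}$ have trivial $\GG_a$-stabiliser, identify $Z(W)_{\min}$ as the single point $[E_{11}:0]$, and check the stabiliser directly. The only difference is that you write out explicitly the weight computation and the stabiliser calculation that the paper dismisses as "easy to check".
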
 

\begin{proof}
Let $Z_{\operatorname{min}}'$ 
denote the analogue for $X'$ of $\zmin$. To show that \eqref{ss=s(U)} is satisfied, we must show that $\operatorname{Stab}_U((w,x)) = \{e\}$ for any $(w,x) \in \zmin'$. We will show instead that $\operatorname{Stab}_U (w) = \{e\}$ for ever $w \in Z(W)_{\operatorname{min}}$, which is sufficient since $Z'_{\operatorname{min}} = Z(W)_{\operatorname{min}} \times \zmin$. 

By definition of the action of the grading $\GG_m \subseteq \hU$, we have that
 \begin{equation} 
 Z(W)_{\operatorname{min}} = \left\{ \left.  \left[ \begin{pmatrix} 
a^2 & 0 & 0 \\
0 & 0 & 0\\
0 & 0 & 0  
\end{pmatrix} : 0 \right]  \  \right| \ a \in  k^{\ast} \right\}.
\label{descripofzmin}
\end{equation}
Moreover, it is easy to check that points in $Z(W)_{\operatorname{min}}$ have trivial unipotent stabiliser groups. Thus the condition \eqref{ss=s(U)} is satisfied for the linear action of $\hU$ on $X'$. 
\end{proof}

By \thref{ss=sissatisfied} we have that \eqref{ss=s(U)} is satisfied for the linear action of $\hU$ on $X'$. Thus by \thref{uhatthm}, after twisting the linearisation of the $\hU$-action on $X'$ by a suitable character and taking a sufficiently large tensor power, we obtain a projective geometric quotient $$\pi: X'^s \to Y: = X' \gitq \hU$$ where $X'^s = X'^0_{\operatorname{min}} \setminus U Z'_{\operatorname{min}}$ by \thref{uhatthm}. 

By its construction the variety $Y$ admits an action of $\GL(2;k)$ given by $$A \cdot \hU ([M:v],x) = \hU ([\sigma(A)M:v],x)$$ for every $A \in \GL(2;k)$ and for every $\hU$-orbit $\hU([M:v],x)$ of a point $([M:v],x) \in X'^0_{\operatorname{min}} \setminus U Z_{\operatorname{min}}'$ (we recall that $\sigma: \GL(2;k) \to \GL(3;k)$ is the map defined at \eqref{gl2rep}). In particular, we can consider the restricted action of the maximal torus $T \subseteq \GL(2;k)$ consisting of diagonal matrices.

\subsubsection*{Relating $\hU$-fixed points in $X$ to $T$-fixed points in $Y$.} 

As noted above, the key to proving \thref{induction} is relating $\hU$-fixed points in $X$ to $T$-fixed points in $Y$, so that the known result regarding smoothness for reductive group actions can be applied. The relationship we establish is given in the following 

\begin{lemma}[Relating $\hU$-fixed points in $X$ to $T$-fixed points in $Y$] \thlabel{relatingfixedpoints} 
Let $z \in \zmin$ and consider a point $([M:v],z) \in X' = W \times X$ where \begin{equation*} M = \begin{pmatrix} a^2 & 2 ab & b^2 \\
0 & 0 & 0 \\
0 & 0 & 0 \\
\end{pmatrix}
\end{equation*} 
for some non-zero $a$ and $b$ and $v$ is a non-zero scalar. Then $x' := ([M:v],z)$ lies in the $\hU$-semistable locus $X'^{s}$ so that the $\hU$-orbit $y : = \hU \cdot ([M:v],x)$ is a well-defined point of $Y$. Moreover, the point $y \in Y$ is fixed by $T$ if and only if $z \in \zmin$ is fixed by $\GG_a$. 
\end{lemma}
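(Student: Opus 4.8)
The plan is to prove both assertions by explicit computation in the models for $W$ and for the $\hU$-action set up above. Throughout I write $x' := ([M:v],z)$, and for $\alpha \neq 0$ I let $\langle \alpha, \beta, \gamma \rangle := \bigl[\widetilde{\sigma}\bigl(\begin{smallmatrix} \alpha & \beta \\ 0 & 0 \end{smallmatrix}\bigr) : \gamma\bigr]$ denote the corresponding point of $W$, so that $[M:v] = \langle a, b, v\rangle$ and, since $\widetilde{\sigma}$ is homogeneous of degree two, $[s^2 M : v] = \langle sa, sb, v\rangle$.

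First I would check that $x' \in X'^{s}$. By \thref{uhatthm} one has $X'^{s} = X'^{0}_{\operatorname{min}} \setminus U Z'_{\operatorname{min}}$, and $Z'_{\operatorname{min}} = Z(W)_{\operatorname{min}} \times \zmin$ (as recorded in the proof of \thref{ss=sissatisfied}), and likewise $X'^{0}_{\operatorname{min}} = W^{0}_{\operatorname{min}} \times \xmino$ because Bialynicki-Birula strata of a product with the product linearisation are products. Since $z \in \zmin$ is fixed by the grading $\GG_m$ we have $z \in \xmino$, so it remains to verify that $[M:v] \in W^{0}_{\operatorname{min}} \setminus U \cdot Z(W)_{\operatorname{min}}$. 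Writing the grading-$\GG_m$ action on $W$ in homogeneous coordinates via \eqref{uhatrep}, the component of $t \cdot [M:v]$ in the minimal (namely $-w$) weight space is $a^2 \neq 0$, so $\lim_{t \to 0} t \cdot [M:v] = \bigl[\bigl(\begin{smallmatrix} a^2 & 0 & 0 \\ 0 & 0 & 0 \\ 0 & 0 & 0 \end{smallmatrix}\bigr) : 0\bigr]$, which lies in $Z(W)_{\operatorname{min}}$ by \eqref{descripofzmin} (using $a \neq 0$); hence $[M:v] \in W^{0}_{\operatorname{min}}$. On the other hand, by \eqref{descripofzmin} and the formula $(u,t) \cdot [N : v'] = [N \widetilde{\rho}(u,t)^{-1} : tv']$, every point of $U \cdot Z(W)_{\operatorname{min}}$ has vanishing last homogeneous coordinate, while $v \neq 0$, so $[M:v] \notin U \cdot Z(W)_{\operatorname{min}}$. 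Thus $x' \in X'^{s}$ and $y := \hU \cdot x'$ is a well-defined point of the geometric quotient $Y = X'^{s} / \hU$, whose points are exactly the $\hU$-orbits in $X'^{s}$.

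Next I would reduce $T$-fixedness of $y$ to a stabiliser condition on $z$. Since $M = \widetilde{\sigma}\bigl(\begin{smallmatrix} a & b \\ 0 & 0 \end{smallmatrix}\bigr)$ has only its top row nonzero, \eqref{gl2rep} gives $\sigma(\operatorname{diag}(s_1,s_2)) M = s_1^2 M$, so the $T$-action on $y$ depends only on $s_1$, and $y$ is $T$-fixed if and only if $\operatorname{diag}(s,1) \cdot y = y$ for all $s$. Using $\widetilde{\rho}(u,t) = t^w \sigma(\rho(u,t))$ together with multiplicativity of $\sigma$ and $\widetilde{\sigma}$, a short calculation gives, for $\alpha \neq 0$, $(u,t) \cdot \langle \alpha, \beta, \gamma \rangle = \langle \alpha,\, t^w(\beta - \alpha u),\, t^{w+1}\gamma \rangle$, and shows that $\langle \alpha, \beta, \gamma \rangle$ is determined by the pair of invariants $(\beta/\alpha,\, \gamma/\alpha^2)$. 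Combining these, $\operatorname{diag}(s,1) \cdot y = y$ holds if and only if some $(u,t) \in \hU$ carries $(\langle sa, sb, v\rangle, z)$ to $(\langle a, b, v\rangle, z)$, which — comparing the $W$-components through the invariants $(b/a,\, v/a^2)$ and the $X$-components directly — is equivalent to the existence of $(u,t) \in \hU$ with $t^{w+1} = s^2$, with $u = (b/a)(1 - t^{-w})$, and with $(u,t) \in \Stab_{\hU}(z)$.

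To finish I would argue as follows. Because $z \in \zmin$ is fixed by the grading $\GG_m$, the group $\Stab_U(z) = \Stab_{\hU}(z) \cap U$ is a closed subgroup of $U \cong \GG_a$, hence — in characteristic zero — equals $\{e\}$ or $U$; accordingly $\Stab_{\hU}(z)$ is either the grading $\GG_m$ or all of $\hU$, the former exactly when $z$ is not $\GG_a$-fixed. If $z$ is $\GG_a$-fixed then $\Stab_{\hU}(z) = \hU$, so (passing to $\bar{k}$, which is harmless for testing $T$-fixedness) for every $s$ one may solve $t^{w+1} = s^2$ and take the corresponding $(u,t) \in \hU = \Stab_{\hU}(z)$; hence $\operatorname{diag}(s,1)$ fixes $y$ for all $s$ and $y$ is $T$-fixed. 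If $z$ is not $\GG_a$-fixed then $(u,t) \in \Stab_{\hU}(z) = \GG_m$ forces $u = 0$, i.e.\ $t^w = 1$ (this is where $b \neq 0$ is used), which together with $t^{w+1} = s^2$ gives $s^{2w} = 1$; so $\operatorname{diag}(s,1)$ fixes $y$ only for finitely many $s$, and $y$ is not $T$-fixed. This proves the equivalence. The step I expect to be the main obstacle is the computation of the $\hU$-action on $W$ in projective coordinates — in particular, tracking the projective rescalings correctly, since $\langle\,\cdot\,\rangle$ is not injective and it is the invariants $(\beta/\alpha,\, \gamma/\alpha^2)$, not $(\alpha,\beta,\gamma)$ themselves, that one must match when deciding whether two points of $W$ lie in the same $\hU$-orbit. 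The remaining ingredients are either routine calculations or the elementary fact that, in characteristic zero, $\GG_a$ has no closed subgroup other than $\{e\}$ and itself.
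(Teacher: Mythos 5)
Your proof is correct and follows essentially the same route as the paper: an explicit computation with the matrix model of $W$ reducing $T$-fixedness of $y$ to the existence, for each $s$, of $(u,t)\in\Stab_{\hU}(z)$ with $t^{w+1}=s^2$ and $u=(b/a)(1-t^{-w})$. Your two small refinements — ruling out $U\cdot Z(W)_{\operatorname{min}}$ via the nonvanishing scalar coordinate, and closing the argument via the classification of closed subgroups of $\GG_a$ in characteristic zero — are sound and if anything tighten the paper's final step.
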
 

\begin{proof} 
The non-zero assumption on $b$ ensures that the point $([M:v],z)$ lies in the stable locus $X'^0_{\operatorname{min}} \setminus U \zmin'$ for the $\hU$-action on $X'$ since $M \notin U Z(W)_{\operatorname{min}}$ yet $z \in \zmin$. Thus the $\hU$-orbit $y = \hU ([M:v],x)$ is a well-defined point in $Y$.  

By definition of the action of $\GL(2;k)$ on $Y$ and of the action of $\hU$ on $X$, we have that $y$ is fixed by $T$ if and only if for every $t_1,t_2 \in \GG_m$ there exists an element $(u,t)^{-1} \in \hU$ such that $$\left[   \begin{pmatrix} t_1^2 & 0 & 0 \\
0 & t_1 t_2 & 0 \\
0 & 0 & t_2^2  \end{pmatrix}  \begin{pmatrix} a^2 & 2ab & b^2 \\
0 & 0 & 0 \\
0 & 0 & 0 
\end{pmatrix} : v  \right]   = \left[   \begin{pmatrix} a^2 & 2ab & b^2 \\
0 & 0 & 0 \\
0 & 0 & 0 
\end{pmatrix} \begin{pmatrix}   t^w & 2 u t^w & u^2 t^w \\
0 & 1 & u \\
0 & 0 & t^{-w} \end{pmatrix} : t^{-1} v  \right] $$ and $$(u,t)^{-1} \cdot x  = x.$$
In other words, and using the assumption that $v \neq 0$, we have that $y \in Y^T$ if and only if for every $t_1,t_2 \in \GG_m$ there exists an element $(u,t)^{-1} \in \operatorname{Stab}_{\hU}(x)$ such that $$\begin{pmatrix} 
t_1^2 a^2 & t_1^2 2 ab & t_1^2 b^2 \\
0 & 0 & 0 \\
0 & 0 & 0
\end{pmatrix} =  \begin{pmatrix} 
t^{w+1} a^2 & 2 u t^{w+1} a^2 + 2 ab  t  & u^2 t^{w+1} a^2 + 2 ab u t  + t^{-w+1} b^2  \\
0 & 0 & 0 \\
0 & 0  & 0
\end{pmatrix}.$$ 
Equality of the first two entries of the above matrices implies that $(u,t) \in \operatorname{Stab}_{\hU} (z)$ must satisfy $t^{w+1} = t_1^2$ and $u = b (t_1^2 - t)/a t_1^2.$ Note that equality of the first two entries ensures equality of the third since they are matrices of the form given in \eqref{choiceofM}. 

From the above calculation we obtain that $y = \hU \cdot ([M:v],z)$ lies in $Y^T$ if and only if $(u,t)^{-1} \in \operatorname{Stab}_{\hU}(z)$ for every $(u,t) \in \hU$ such that $u = b(t^w-1)/at^w$. The latter condition implies that $z$ is fixed by $U=\GG_a$ (since $z$ is already fixed by the grading $\GG_m$) and hence that $z \in X^{\GG_a} \cap \zmin$. Conversely, if $z \in X^{\GG_a} \cap \zmin$, then $(u,t) \in \operatorname{Stab}_U(z)$ for any $(u,t)$ satisfying $u = b(t^w-1)/at^w$. We have thus established that $y =\hU \cdot ([M:v],z) \in Y$ is fixed by $T$ if and only if $z \in \zmin$ is fixed by $\GG_a$. 
\end{proof} 

We can now prove \thref{induction}.

\begin{proof}[Proof of \thref{induction}]
Let $z_0 \in X^{\hU} \cap \xmino$ and suppose that $X$ is smooth at $z_0$. We wish to show that $X^{\hU} \cap \xmino$ is smooth at $z_0$, and we do so by using the auxiliary variety $Y$ admitting the action of the maximal torus $T \subseteq \GL(2;k)$. 

To this end, let $y_0 : = \hU \cdot ([M_0:1],z_0) \in Y$ where $M_0$ is chosen as at \eqref{choiceofM}, with $a$ and $b$ non-zero. Then by \thref{relatingfixedpoints}, we have that $y_0 \in Y^T$. Our aim is to show that $Y^T$ is smooth at $y_0$ by using the classical result concerning smoothness of reductive fixed point sets (see \cite[Prop 1.3]{Iversen1972}). To do so we must first establish that $Y$ is smooth at $y_0$. Since $X$ is smooth at $z_0$ by assumption and since $W$ is smooth at $M_0$, we have that $X'$ is smooth at $([M_0:1],z_0)$. Moreover, the point $([M_0:1],z_0)$ has trivial stabiliser group in $\hU$ since $\GG_m$ acts non-trivially on the scalar coordinate. Since by construction $Y$ is a geometric $\hU$-quotient, it follows that $Y$ is smooth at $y_0$ (we note that in general $Y$ will be smooth at the $\hU$-orbit of any point $([M:v],z) \in X'^{s,\hU}$ such that $M$ is of the form given in \eqref{choiceofM} with $a$ and $b$ non-zero and $v$ is non-zero). Therefore $Y$ is smooth in a small neighbourhood of $y_0$. It follows from the reductivity of $T$ that $Y^T$ must therefore be smooth at $y_0$. Since smoothness is an open condition, we have that $Y^T$ is also smooth in an open neighbourhood of $y_0$ contained in $Y^T$, and we call this neigbhourhood $N_{y_0}$.

The preimage of $N_{y_0}$ is an open neigbhourhood in $X'^0_{\operatorname{min}} \setminus U \zmin'$ of $([M_0:1],z)$, or equivalently a product of open neighbourhoods $N_{[M_0:1]}$ of $[M_0:1]$ and $N_{z_0}$ of $z_0$ respectively. By making $N_{y_0}$ smaller if necessary, we can assume that $N_{[M_0:1]}$ is contained in the open subvariety of $W$ consisting of points $[M:v]$ where $v$ is non-zero. Note that $N_{[M_0:1]}$ is then contained as an open subset in $W^0_{\operatorname{min}} \setminus U Z(W)_{\operatorname{min}}$. Since the action of $\hU$ on the projective variety $W$ satisfies \eqref{ss=s(U)}, thus giving the existence of a projective geometric quotient for the action of $\hU$ on $W^0_{\operatorname{min}} \setminus U Z(W)_{\operatorname{min}}$, it follows that $N_{[M_0:1]}$ admits a geometric $\hU$-quotient $N_{[M_0:1]} /\hU$.

For the neigbhourhood $N_{z_0}$, we use again \thref{relatingfixedpoints}. That is, by making $N_{y_0}$ yet smaller if necessary, we can ensure that $N_{z_0}$ is contained in $X^{\GG_a} \cap \zmin$. The inclusion of the open neighbourhood $N_{z_0} \subseteq X^{\GG_a} \cap \zmin$ shows that there is a well-defined map $$N_{y_0} \to N_{[M_0:1]}/ \hU \times N_{z_0}$$ given by $\hU \cdot ([M:v],z) \mapsto  (\hU \cdot ([M:v]), z)$, which is in fact an isomorphism. 

Since we know that $N_{y_0}$ is smooth, it follows that $N_{z_0} \subseteq X^{\GG_a} \cap \zmin$ must also be smooth. This establishes that $X^{\GG_a} \cap \zmin$ is smooth at $z_0$. 
\end{proof}

\subsection{The externally graded unipotent case} \label{subsec:Uhat} 

In this section we generalise \thref{induction} to the case where the group is an externally graded unipotent group $\hU: = U \rtimes \GG_m$ with $U$ abelian and $\GG_m$ acting with a single weight on $\operatorname{Lie}U$. That is, we prove the following 

\begin{theorem} \thlabel{proofinspecialcase}
Let $\hU: = U \rtimes \GG_m$ where $U$ is an abelian unipotent group and $\GG_m$ acts with a single and positive weight on $\operatorname{Lie} U$ via the adjoint action. Suppose that $\hU$ acts linearly on an irreducible projective variety $X$ and let $x \in X^{\hU} \cap \xmino$. Then the subvariety $X^{\hU} \cap \xmino$ is smooth at $x$ if $X$ is smooth at $x$. 
\end{theorem}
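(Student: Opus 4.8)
The plan is to adapt, essentially verbatim, the argument proving \thref{induction}, which, as noted above, already contains all the essential ideas. Write $U \cong \GG_a^{n}$, so that $\hU = U \rtimes \GG_m$ with $\GG_m$ acting on $\operatorname{Lie} U \cong k^{n}$ with a single positive weight $w$. Since $\xmino$ is open in $X$, and since a point of $\xmino$ fixed by the grading $\GG_m$ necessarily lies in $\zmin$ (its limit under the grading is itself, which by definition of $\xmino$ lies in $\zmin$), while conversely $\zmin \subseteq \xmino \cap X^{\GG_m}$, one has
\[ X^{\hU} \cap \xmino \;=\; X^{U} \cap \zmin , \]
so it suffices to prove that $X^{U} \cap \zmin$ is smooth at a point $x$ at which $X$ is smooth. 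Following the proof of \thref{induction}, the idea is to construct an auxiliary projective variety $Y$ carrying an action of a torus $T$ (hence reductive), together with a point $y \in Y^{T}$ attached to $x$, such that smoothness of $X$ at $x$ forces smoothness of $Y$ at $y$, and such that smoothness of $Y^{T}$ at $y$ forces smoothness of $X^{U} \cap \zmin$ at $x$ --- the bridge being that, by the classical reductive result \cite[Prop.~1.3]{Iversen1972}, $Y^{T}$ is automatically smooth at $y$ once $Y$ is smooth there.

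The construction of $Y$ would follow the representation-theoretic set-up of Section~\ref{subsec:basecase}, with $n+1 = \operatorname{dim}\hU$ in place of $2$. The coadjoint representation gives an embedding of $\hU$ into $\GL((\operatorname{Lie}\hU)^{\vee}) \cong \GL(n+1;k)$ under which $U$ maps isomorphically onto the abelian unipotent subgroup of matrices $\left(\begin{smallmatrix} I_n & \mathbf{u} \\ 0 & t^{-w}\end{smallmatrix}\right)$; composing with the symmetric-square representation $\sigma\colon \GL(n+1;k)\to\GL(\operatorname{Sym}^{2}(k^{n+1}))$ and twisting by a suitable power of the determinant so as to land in $\SL$ yields a representation $\widetilde{\rho}$ of $\hU$ on $\operatorname{Sym}^{2}(k^{n+1})$. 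One then takes $W$ to be the closure, inside $\mathbb{P}(\operatorname{End}(\operatorname{Sym}^{2}(k^{n+1}))\oplus k)$, of the image under the extended map $\widetilde{\sigma}$ of the matrices $\left(\begin{smallmatrix} B & \mathbf{c} \\ 0 & 0\end{smallmatrix}\right)$ with $B\in\GL(n;k)$ and $\mathbf{c}\in k^{n}$, equipped with the linear $\hU$-action $(u,t)\cdot[M:v] = [M\,\widetilde{\rho}(u,t)^{-1}:tv]$. As in \thref{ss=sissatisfied}, an explicit description of $Z(W)_{\operatorname{min}}$ shows that points of $Z(W\times X)_{\operatorname{min}} = Z(W)_{\operatorname{min}}\times\zmin$ have trivial $U$-stabiliser, so \eqref{ss=s(U)} holds both for the $\hU$-action on $W$ alone and for the induced $\hU$-action on $W\times X$; hence $Y := (W\times X)\gitq\hU$ is a well-defined projective geometric quotient by \thref{uhatthm}, it inherits a $\GL(n+1;k)$-action, and we let $T\subseteq\GL(n+1;k)$ be the diagonal maximal torus.

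The crux is the analogue of \thref{relatingfixedpoints}: for $z\in\zmin$ and $M_0 := \widetilde{\sigma}\!\left(\begin{smallmatrix} B & \mathbf{c} \\ 0 & 0\end{smallmatrix}\right)$ with $B$ invertible and $\mathbf{c}$ chosen generically, the point $([M_0:1],z)$ lies in the $\hU$-stable locus $(W\times X)^{0}_{\operatorname{min}}\setminus U\, Z(W\times X)_{\operatorname{min}}$, so $y := \hU\cdot([M_0:1],z)$ is a well-defined point of $Y$, and $y\in Y^{T}$ if and only if $z$ is fixed by all of $U$. This is the one place where the rank-one argument must genuinely be reworked: whereas in the rank-one case a single probe vector only sees one linear functional of $u$, here the grading-direction column of the transformed matrix $N\,\widetilde{\rho}(u,t)^{-1}$ --- with $N = \left(\begin{smallmatrix} B & \mathbf{c} \\ 0 & 0\end{smallmatrix}\right)$ and $M_0 = \widetilde{\sigma}(N)$ --- is an \emph{invertible} affine function of $\mathbf{u}$ as soon as $B\in\GL(n;k)$, so a single probe matrix now detects the whole vector $\mathbf{u}$. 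Granting this, the matrix computation runs parallel to the proof of \thref{relatingfixedpoints}: $T$-fixedness of $y$ turns out to be equivalent to $(u,t)^{-1}\in\operatorname{Stab}_{\hU}(z)$ for a family of pairs $(u,t)\in\hU$ in which, as the coordinates of $T$ and the auxiliary parameter $t$ vary, $\mathbf{u}$ sweeps out a Zariski-dense subset of $U$; since $z$ is already fixed by the grading this forces $\operatorname{Stab}_{U}(z) = U$, while the converse implication is immediate. Establishing this equivalence --- in particular keeping track of the determinant twist and verifying that the swept-out values of $\mathbf{u}$ really are dense --- is the main obstacle; everything else is bookkeeping.

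With this lemma in hand, the proof concludes exactly as for \thref{induction}. Fix $x =: z_0 \in X^{U}\cap\zmin$ with $X$ smooth at $z_0$, and choose $M_0$ as above, in the smooth locus of $W$. Then $([M_0:1],z_0)$ is a smooth point of $W\times X$, and it has trivial $\hU$-stabiliser because the grading acts non-trivially on the scalar coordinate $v$; as $Y$ is a geometric $\hU$-quotient it follows that $Y$ is smooth at $y_0 := \hU\cdot([M_0:1],z_0)$, and hence, by reductivity of $T$ together with \cite[Prop.~1.3]{Iversen1972}, that $Y^{T}$ is smooth throughout some open neighbourhood $N_{y_0}$ of $y_0$. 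Shrinking $N_{y_0}$ if necessary, its preimage in $(W\times X)^{0}_{\operatorname{min}}\setminus U\, Z(W\times X)_{\operatorname{min}}$ is a product $N_{[M_0:1]}\times N_{z_0}$, where $N_{[M_0:1]}\subseteq W^{0}_{\operatorname{min}}\setminus U\, Z(W)_{\operatorname{min}}$ (which admits a geometric $\hU$-quotient, since \eqref{ss=s(U)} holds for $W$) and, by the lemma, $N_{z_0}\subseteq X^{U}\cap\zmin$, on which $\hU$ acts trivially. Consequently $N_{y_0}\cong (N_{[M_0:1]}/\hU)\times N_{z_0}$, and since $N_{y_0}$ is smooth so is $N_{z_0}$; therefore $X^{U}\cap\zmin = X^{\hU}\cap\xmino$ is smooth at $z_0$, as required.
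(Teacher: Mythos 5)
Your reduction of the statement to the smoothness of $X^{U}\cap\zmin$ at $x$ is correct, and your overall strategy (auxiliary variety with a torus action, the reductive result of Iversen, then descending smoothness through the geometric quotient) is the same as the paper's. However, your construction of the auxiliary variety is genuinely different from the paper's, and the key lemma on which it rests is false once $n=\dim U\geq 2$. You replace the paper's two-dimensional coadjoint probe by the coadjoint representation of all of $\hU$ on $k^{n+1}$ and use a \emph{single} probe matrix $M_0=\widetilde{\sigma}(N_0)$ with $N_0=\bigl(\begin{smallmatrix}B&\mathbf{c}\\0&0\end{smallmatrix}\bigr)$ and $B$ invertible. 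The left action of $\tau=\operatorname{diag}(t_1,\dots,t_{n+1})\in T$ sends $N_0$ to $\bigl(\begin{smallmatrix}DB&D\mathbf{c}\\0&0\end{smallmatrix}\bigr)$ with $D=\operatorname{diag}(t_1,\dots,t_n)$, whereas the right action of any $(u,t)^{-1}\in\hU$ sends $N_0$ to a scalar multiple of $\bigl(\begin{smallmatrix}B&B\mathbf{u}+t^{-w}\mathbf{c}\\0&0\end{smallmatrix}\bigr)$: it leaves the block $B$ untouched except for an overall scalar. Since $\widetilde{\sigma}(A)=c\,\widetilde{\sigma}(A')$ forces $A$ and $A'$ to be proportional, the identity $\tau\cdot[M_0:1]=(u,t)^{-1}\cdot[M_0:1]$ in $W$ has no solution at all unless $DB=\mu B$ for a scalar $\mu$, i.e.\ unless $t_1=\cdots=t_n$. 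Hence for $n\geq 2$ the point $y$ is \emph{never} fixed by the full diagonal torus, even when $z\in X^{U}$, so the ``if'' direction of your lemma fails. Moreover, on the subtorus $\{t_1=\cdots=t_n\}$ where solutions do exist, the elements $\mathbf{u}$ that are forced into $\operatorname{Stab}_U(z)$ are exactly those of the form $(1-t^{-w})B^{-1}\mathbf{c}$ --- a single line in $U$, not a Zariski-dense subset --- so even the correctly restricted statement would only show that $z$ is fixed by a one-parameter subgroup of $U$. Your claim that ``$\mathbf{u}$ sweeps out a Zariski-dense subset of $U$'' is therefore not salvageable with one probe: the torus directions that survive the $B$-block constraint are too few to sweep out $U$.

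The paper circumvents exactly this difficulty by using $n$ \emph{independent} copies of the rank-one probe rather than one large one: it embeds $\hU$ into $\hU_T:=U\rtimes\GG_m^{n}$ via $(u_1,\dots,u_n,t)\mapsto((u_1,t),\dots,(u_n,t))$, forms $Y_n=(W^{n}\times X)\gitq\hU_T$ with $W$ exactly as in the $\GG_a\rtimes\GG_m$ case, and uses the torus $T^{n}\subseteq\GL(2;k)^{n}$. Each factor of $W^{n}$ detects one coordinate $u_i$, and each $\GG_m^{2}$-factor of the torus can be compensated separately by the corresponding $\widehat{\GG}_a$-factor, so that $T^{n}$-fixedness of $y$ really is equivalent to $z$ being fixed by all of $U$. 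With this product construction in place of your single enlarged probe, the remainder of your argument (smoothness of $Y$ at $y_0$ via triviality of the stabiliser, Iversen, and the local product decomposition $N_{y_0}\cong(N_{[M_0:1]}/\hU_T)\times N_{z_0}$) goes through essentially verbatim and agrees with the paper's proof.
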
 

Our proof of this result is an extension of the proof of \thref{induction} to the case where $\operatorname{dim} U \geq 1$ and we follow the steps used to prove this result in Section \ref{subsec:basecase}. The first step is to define an auxiliary variety $Y_d$ admitting a torus action.

\subsubsection*{The auxiliary variety $Y_d$ when $\operatorname{dim} U = d\geq 1$.} 

As in Section \ref{subsec:basecase}, we let $W$ denote the closed subvariety of $\mathbb{P}(\operatorname{Mat}_{3 \times 3}(k)  \oplus k)$ consisting of pairs $[M:v]$ where $M$ is of the form given in \eqref{choiceofM} with $a,b \in k$. 

Let $d = \operatorname{dim} U$. Then since $U$ is abelian we can identify $U$ with $\GG_a^{d}$. Under this identification, we can consider the embedding $\hU \cong \GG_a^{d} \rtimes \GG_m \hookrightarrow \widehat{\GG}_a \times \cdots \times \widehat{\GG}_a$ given by $(u_1,\hdots,u_d,t) \mapsto ((u_1,t),\hdots,(u_d,t)).$ We identify the product $\widehat{\GG}_a \times \cdots \times \widehat{\GG}_a$ with $\hU_T : = U \rtimes T$ where $T = \GG_m^d$. 

We now consider the product $W^d$ of $d$ copies of $W$, on which we define a linear action of $\widehat{\GG}_a \times \cdots \times \widehat{\GG}_a$ as follows: the element $((u_1,t_1),\hdots,(u_d,t_d))$ acts on the $i$-th factor of $W^d$ via $(u_i,t_i)$ via multiplication on the right by $\widetilde{\rho}(u_i,t_i)^{-1}$ on matrices (see \eqref{uhatrep} for the definition of $\widetilde{\rho}$), and by multiplication by $t_i$ on the scalar coordinate. 

Each copy of $\widehat{\GG}_a$ inside the product $\widehat{\GG}_a \times \cdots \times \widehat{\GG}_a$ acts on $X$ via the restriction to $\widehat{\GG}_a \subseteq \hU$ of the $\hU$-action on $X$; we note that a point in $X$ is fixed by $\hU$ if and only if it is fixed by $\widehat{\GG}_a \times \cdots \times \widehat{\GG}_a$. We consider the natural linear action of $\widehat{\GG}_a \times \cdots \times \widehat{\GG}_a$  on the product $X' = W^d \times X$ induced by the linear action on each factor. Just as in the $d=1$ case, we obtain that the condition \eqref{ss=s(U)} is satisfied for this action (see \thref{ss=sissatisfied}). Thus we have a projective geometric quotient $$\pi: {X'}^{ss} \to  Y_d : = X' \gitq \widehat{U}_T.$$ 

We define an action of $(\GL(2;k))^d$ on $Y_d$, given by multiplication on the left for the matrices, and by the trivial action on the scalars and on $X$. We consider the restricted action of the maximal torus $T^d \subseteq (\GL(2;k)^d)$ consisting of $d$-tuples of diagonal matrices.

\subsubsection*{Relating $\hU$-fixed points in $X$ to $T^d$-fixed points in $Y_d$.}

We can now relate, as per the $d=1$ case, the condition of a point $z \in \zmin$ being fixed by $U$ to the condition of an associated point $y \in Y_d$ being fixed by $T^d$. That is, we prove the following generalisation of \thref{relatingfixedpoints}. 

\begin{lemma}[Relating $\hU$-fixed points in $X$ to $T^d$-fixed points in $Y_d$]  \thlabel{fixedpointcharac2}
Let $z \in \zmin$, let $$M = \begin{pmatrix} 
a^2 & 2 ab & b^2 \\
0 & 0 & 0 \\
0 & 0 & 0 
\end{pmatrix}$$ for some $a,b \neq 0$ and choose $v \neq 0$. Then $x':= ([M:v],\hdots,[M:v],z)$ lies in the $\hU_T$-semistable locus $X'^{ss}$, so that the $\hU$-orbit $y:= \hU_T ([M:v],\hdots,[M:v],z)$ is a well-defined point of $Y_d$. Moreover, the point $y \in Y_d$ is fixed by $T^d$ if and only if $z \in \zmin$ is fixed by $U$. 
\end{lemma}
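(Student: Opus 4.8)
The plan is to run the proof of \thref{relatingfixedpoints} in parallel on each of the $d$ factors of $W^d$, exploiting that $\widehat{U}_T = \widehat{\GG}_a \times \cdots \times \widehat{\GG}_a$ is a genuine direct product, so that the $d$ resulting conditions are coupled only through the common last coordinate $z \in X$. Throughout I write $\widehat{\GG}_a^{(i)}$ for the $i$-th factor of $\widehat{U}_T$, regarded also as a subgroup of $\hU$ via the given embedding $\hU \hookrightarrow \widehat{U}_T$; recall that a point of $X$ is fixed by $\hU$ if and only if it is fixed by every $\widehat{\GG}_a^{(i)}$, and that $\zmin \subseteq X^{\GG_m}$, so that $z$ is automatically fixed by the grading torus.

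\emph{Semistability of $x'$.} Since $X' = W^d \times X$ is linearised by the external tensor product, its minimal weight subvariety is $Z'_{\min} = Z(W)_{\min} \times \cdots \times Z(W)_{\min} \times \zmin$, and, using \eqref{descripofzmin} together with the computation of $U \cdot Z(W)_{\min}$, the locus $U Z'_{\min}$ lies inside the set of points of $X'$ whose every $W$-coordinate has vanishing scalar entry. Thus the hypotheses $b \neq 0$ and $v \neq 0$ force each $W$-coordinate of $x'$ into $W^0_{\min} \setminus U Z(W)_{\min}$ and hence $x' \in {X'}^0_{\min} \setminus U Z'_{\min}$; the argument of \thref{ss=sissatisfied} then places $x'$ in the $\widehat{U}_T$-semistable locus ${X'}^{ss}$, so that $y = \widehat{U}_T \cdot x'$ is a well-defined point of $Y_d$.

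\emph{The fixed-point condition.} By definition of the $(\GL(2;k))^d$-action on $Y_d$, the point $y$ is fixed by $T^d$ if and only if for every $(A_1, \ldots, A_d) \in T^d$ there exists $\widehat{u} = ((u_1, t_1), \ldots, (u_d, t_d)) \in \widehat{U}_T$ carrying $([\sigma(A_1) M : v], \ldots, [\sigma(A_d) M : v], z)$ back to $x'$. Because $\widehat{U}_T$ is a direct product one may test this one factor at a time, setting $A_j = I$ for $j \neq i$ and taking the $j$-th component of $\widehat{u}$ trivial; the $i$-th factor then yields exactly the matrix identity solved in the proof of \thref{relatingfixedpoints}, which — after using $v \neq 0$ to match the scalar coordinate — forces $(u_i, t_i) \in \widehat{\GG}_a^{(i)}$ with $u_i = b(t_i^w - 1)/(a t_i^w)$ and leaves only the condition that this element fix $z$. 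As $A_i$ runs over $T \subseteq \GL(2;k)$, these elements $(u_i, t_i)$ sweep out a dense subset of a curve in $\widehat{\GG}_a^{(i)}$ which, together with the grading $\GG_m$ (already fixing $z$), generates $\widehat{\GG}_a^{(i)}$; since fixed loci are closed, requiring all of them to fix $z$ is equivalent to $\GG_a^{(i)}$ fixing $z$. Ranging over $i = 1, \ldots, d$ and using $U = \prod_i \GG_a^{(i)}$, this is equivalent to $z \in X^U$. Conversely, if $z \in X^U$ then $z \in X^{\hU}$, so every $\widehat{\GG}_a^{(i)}$ fixes $z$ and a suitable $\widehat{u}$ always exists; hence $y \in Y_d^{T^d}$.

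The main obstacle is essentially bookkeeping: one must keep the torus $T = \GG_m^d$ used to build $\widehat{U}_T$ distinct from the torus $T^d \subseteq (\GL(2;k))^d$ acting on $Y_d$, and one must justify the ``test one factor at a time'' reduction — namely that the stabiliser freedom in the untested factors can be absorbed by taking those components of $\widehat{u}$ trivial, so that the $d$ equations defining $T^d$-fixedness decouple except through their shared requirement that $\widehat{u}$ fix $z$. Once this is in place, the statement is a routine $d$-fold repetition of \thref{relatingfixedpoints}.
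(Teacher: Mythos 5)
Your proof is correct and takes essentially the same route as the paper's: semistability is verified factor by factor, and the $T^d$-fixed-point condition is reduced to $d$ parallel copies of the $d=1$ computation of \thref{relatingfixedpoints}, coupled only through the requirement that the resulting elements fix $z$. One caveat on the semistability step: for $\hU_T = U\rtimes T$ with $\operatorname{dim} T = d>1$ the semistable locus need not coincide with $X'^0_{\operatorname{min}}\setminus U Z'_{\operatorname{min}}$ (that description of the stable locus comes from the $\hU$-theorem for groups of the form $U\rtimes\GG_m$), so the inference ``$x'\in X'^0_{\operatorname{min}}\setminus U Z'_{\operatorname{min}}$, hence semistable by \thref{ss=sissatisfied}'' is not valid as written. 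The paper instead applies the Hilbert--Mumford-type criterion \eqref{hmcriterionH} to identify $X'^{ss} = X'^0_{\operatorname{min}}\setminus\bigcup_{i=1}^d p_i^{-1}(\GG_a Z(W\times X)_{\operatorname{min}})$. Fortunately your factor-wise observation --- each $W$-coordinate of $x'$ has non-vanishing scalar entry and hence avoids $\GG_a Z(W)_{\operatorname{min}}$ --- is exactly the condition needed to place $x'$ in this set, so the conclusion survives once the correct description of $X'^{ss}$ is substituted. Your treatment of the decoupling of the $T^d$-condition is at the same level of detail as the paper's ``same calculations as in the $d=1$ case for each factor''; the one point to keep in mind there is that when you set $A_j=I$ for $j\neq i$, the $j$-th components of $\widehat{u}$ are constrained only to lie in the finite stabiliser of $[M{:}v]$ in $\widehat{\GG}_a$ rather than forced to be trivial, a finite ambiguity that does not affect the density-and-closure argument.
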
 

\begin{proof} 
To show that $x'$ lies in the $\hU_T$-semistable locus $X'^{ss}$, we can use the Hilbert-Mumford criterion for the action of groups of the form $\hU_T$, as stated in Section \ref{subsec:ss=snonred} (see \eqref{hmcriterionH}). That is, the semistable locus $X'^{ss}$ for the action of $\hU_T$ on $X'$ is given by \begin{equation} X'^{ss} = \bigcap_{u \in U} u X'^{ss,T}. \label{tss}
\end{equation}  

To describe the semistable locus for this torus action, we let $p_i$ denote the projection from $W^d \times X $ to the product of the $i$-th factor of $W^d$ with $X$, which has an action of $\hU$. We recall that the semistable locus for the action of the grading $\GG_m$ in $\hU$ on $W \times X$ (after having suitably modified the linearisation) is given by $(W^0_{\operatorname{min}} \times \xmino) \setminus Z(W \times X)_{\operatorname{min}}$. 
It follows that $$X'^{ss,T} = X'^0_{\operatorname{min}} \setminus \bigcup_{i=1}^d p_i^{-1}(Z(W \times X)_{\operatorname{min}}).$$ By \eqref{tss}, we obtain that $$X'^{ss} =  X'^0_{\operatorname{min}} \setminus \bigcup_{i=1}^d p_i^{-1}( \GG_a Z(W \times X)_{\operatorname{min}}).$$ 

Since $z \in \zmin$ and $[M{:}v] \notin Z(W)_{\operatorname{min}}$, we know that $([M{:}v],z) \notin \GG_a Z(W \times X)_{\operatorname{min}} = \GG_a (Z(W)_{\operatorname{min}} \times \zmin)$. 

By the same calculations as in the $d=1$ case for each factor $W $, we obtain that $y: = \hU_T \cdot ([M{:}v],\hdots,[M{:}v],z)$ is fixed by $T^d$ if and only if $z$ is fixed by $\widehat{\GG}_a \times \cdots \times \widehat{\GG}_a$ or equivalently by $U$. 
\end{proof} 

We now have all the ingredients needed to generalise the proof of \thref{induction} to the case where $\operatorname{dim} U \geq 1$.  

\begin{proof}[Proof of \thref{proofinspecialcase}]
Let $z_0 \in X^U \cap \zmin$ and suppose that $X$ is smooth at $z_0$. we wish to show that $X^U \cap \zmin$ is smooth at $z_0$ using the action of the torus $T^d$ on the auxiliary variety $Y_d$. To simplify notation we set $T = T_d$ and $Y = Y_d$. As in the $d=1$ case, we let $$M_0 = \begin{pmatrix} 
a^2 & 2 ab & b^2 \\
0 & 0 & 0 \\
0 & 0 & 0 
\end{pmatrix}$$ for some $a,b \neq 0$ and let $y_0: = \hU_T  \cdot ([M_0:1],\hdots, [M_0:1],z_0) \in Y$. Then by \thref{fixedpointcharac2} we have that $y_0 \in Y^{T}$. Just as in the proof of \thref{induction}, we have that $Y$ is smooth at $y_0$ since $\pi$ is a geometric quotient and $v_i \neq 0$ for all $i$. Thus since $T$ is reductive we can conclude that $Y^{T^d}$ is smooth at $y_0$, and also in a small neighbourhood $N_{y_0}$ of $y_0$. 

The preimage of $N_{y_0}$ under $\pi$ is an open neighbourhood of $([M_0:1],\hdots,[M_0:1],z_0)$ in $X'^{ss}$, or equivalently a product of open neigbhourhoods $N_{([M_0:1],\hdots,[M_0:1])}$ and $N_{z_0}$ of $([M_0:1],\hdots,[M_0:1])$ and of $z_0$ respectively. By \thref{fixedpointcharac2}, we know that by making $N_{y_0}$ smaller if necessary we can ensure that $N_{z_0}$ is contained in $X^U \cap \zmin$ so that it is an open neighbourhood of $z_0$ in $X^U \cap \zmin$. Then by using the same argument as in the last part of the proof of \thref{induction}, we can conclude that $N_{z_0}$ must also be smooth. This proves that $X^U \cap \zmin$ is smooth at $z_0$. 
\end{proof}

\subsection{The general non-reductive case}  \label{subsec:generalH}

We conclude this section by proving \thref{mainsmoothnessresult}, which is a simple consequence of \thref{proofinspecialcase}. 
 
\begin{cor}[\thref{mainsmoothnessresult}] \thlabel{mainsmoothnessresultbody} 
Let $H = U \rtimes R$ denote a linear algebraic group such that $U$ is abelian and such that $R$ contains a central one-parameter subgroup $\lambda: \GG_m \to Z(R)$ acting with a single and positive weight on the Lie algebra of $U$ via the adjoint action. Suppose that $H$ acts on an irreducible projective scheme $X$, and let $\xmino$ denote the open Bialynicki-Birula stratum associated to the action of $\lambda(\GG_m)$ on $X$. Then $X^H \cap \xmino$ is smooth at any point at which $\xmino$ is smooth. In particular, the scheme $X^H \cap \xmino$ is smooth if $X$ is smooth. 
\end{cor}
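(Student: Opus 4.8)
The plan is to deduce \thref{mainsmoothnessresultbody} from \thref{proofinspecialcase} together with the classical result that fixed point sets for reductive group actions on smooth schemes are smooth (\cite[Prop 1.3]{Iversen1972}). The basic observation is that fixed point sets for a product (or an iterated extension) of groups can be computed one subgroup at a time: since $H = U \rtimes R$ and $\lambda(\GG_m) \subseteq Z(R)$, we have $H = \hU \rtimes R_\lambda$ where $\hU = U \rtimes \lambda(\GG_m)$ and $R_\lambda = R/\lambda(\GG_m)$ is reductive. Consequently $X^H = (X^{\hU})^{R_\lambda}$ as schemes — one first takes the fixed locus of the normal subgroup $\hU$, on which $R_\lambda$ acts (the action descends precisely because $\hU$ is normal in $H$), and then takes the $R_\lambda$-fixed locus. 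Intersecting with $\xmino$, and using that $\xmino$ is $H$-invariant (it is defined via $\lambda(\GG_m) \subseteq H$ and is preserved by the whole group since $\lambda$ is central in $R$ and $U$ acts trivially on the relevant limits), one gets $X^H \cap \xmino = \big(X^{\hU} \cap \xmino\big)^{R_\lambda}$.

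The argument then runs as follows. First, I would record the identification $X^H \cap \xmino = (X^{\hU} \cap \xmino)^{R_\lambda}$ and check that $X^{\hU}\cap\xmino$ carries a natural $R_\lambda$-action — this is where one uses normality of $\hU$ in $H$ and centrality of $\lambda$ in $R$. Second, fix a point $x \in X^H \cap \xmino$ at which $\xmino$ is smooth; I want to show $X^H \cap \xmino$ is smooth at $x$. By \thref{proofinspecialcase} applied to the action of $\hU$ on $X$ (noting the hypotheses on $H$ say exactly that $U$ is abelian and $\lambda(\GG_m)$ acts with a single positive weight on $\Lie U$, so $\hU$ is of the required form), the scheme $X^{\hU} \cap \xmino$ is smooth at every point at which $\xmino$ is smooth — in particular at $x$, and hence smooth in an open neighbourhood of $x$ inside $X^{\hU}\cap\xmino$. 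Third, since $R_\lambda$ is reductive and acts on the (locally at $x$) smooth scheme $X^{\hU}\cap\xmino$, the classical result gives that its fixed locus $(X^{\hU}\cap\xmino)^{R_\lambda} = X^H \cap \xmino$ is smooth at $x$. The final sentence ("in particular smooth if $X$ is smooth") is immediate since smoothness of $X$ implies smoothness of the open subscheme $\xmino$ everywhere.

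A couple of technical points deserve care but should not be serious obstacles. One is that \thref{proofinspecialcase} is stated for $X$ an irreducible projective \emph{variety}, whereas the corollary allows a projective \emph{scheme}; given the paper's conventions (working with varieties throughout for cohomological purposes) and that the interesting case is $X$ smooth — hence reduced — this is a matter of either restricting to that case or checking the argument of \thref{proofinspecialcase} goes through scheme-theoretically, which it does since the auxiliary-variety construction only used that $\xmino$ admits the relevant geometric quotient. A second point is irreducibility: $X^{\hU}\cap\xmino$ need not be irreducible even if $X$ is, so when invoking \thref{proofinspecialcase} and \cite[Prop 1.3]{Iversen1972} one should work componentwise or simply note that both cited results are local statements about smoothness at a point. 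The main (and really the only) obstacle is verifying cleanly the scheme-theoretic identity $X^H \cap \xmino = (X^{\hU} \cap \xmino)^{R_\lambda}$ with its induced $R_\lambda$-action — i.e. that Fogarty's fixed-point functor is compatible with the semidirect product decomposition and with restriction to the invariant open $\xmino$; once that is in hand the result is a two-line application of \thref{proofinspecialcase} and the reductive smoothness theorem.
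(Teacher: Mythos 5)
Your proposal is correct and follows essentially the same route as the paper: apply \thref{proofinspecialcase} to $\hU = U \rtimes \lambda(\GG_m)$ to get smoothness of $X^{\hU}\cap\xmino$ at $x$, then use the identification $X^H\cap\xmino = (X^{\hU}\cap\xmino)^{H/\hU}$ and the reductive smoothness theorem of Iversen/Fogarty for the residual action of $H/\hU \cong R/\lambda(\GG_m)$. The technical caveats you flag (scheme vs.\ variety, irreducibility, the induced action on the fixed locus) are real but minor, and the paper treats them with the same brevity.
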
 

\begin{proof}
Suppose that $X$ is smooth at a point $x \in X^H \cap \xmino$. Let $\hU := U \rtimes \lambda(\GG_m)$ and note that $x \in X^{\hU} \cap \xmino$. Then by \thref{proofinspecialcase}, we have that $X^{\hU} \cap \xmino$ is smooth at $x$. 
The variety $X^{\hU} \cap \xmino$ has an induced action of $H/\hU$, which is reductive. Thus by \cite[Thm 1.3]{Iversen1972} we obtain that $(X^{\hU} \cap \xmino)^{H/\hU}$ is smooth at $x$. By observing that $$(X^{\hU} \cap \xmino)^{H/\hU} = (X^{\hU})^{H/\hU} \cap \xmino = X^H \cap \xmino,$$ we can conclude that $X^H \cap \xmino$ is smooth at $x$ as required. 
\end{proof}

\section{Cohomology when `semistability coincides with stability'} 

\label{sec:cohomss=s}

Having proved \thref{mainsmoothnessresult} in Section \ref{sec:smoothness} above, our aim for the remainder of this paper is to prove the second of the two main results of this paper, namely \thref{maintheorem}. That is, we wish to compute the Poincar\'e series of non-reductive GIT quotients in the case where `semistability does not coincide with stability'. Doing so relies on existing results for computing the Poincar\'e series of GIT quotients (both classical and non-reductive) when `semistability coincides with stability'; the aim of this section is to summarise these results. In Section \ref{subsec:cohomred} we consider the reductive case, in Section \ref{subsec:cohomuhat} the externally graded unipotent case and finally in Section \ref{subsec:cohomH} the general internally graded non-reductive case. 

We assume from here on that we are working over the field of complex numbers $k = \CC$. 

\subsection{The reductive case} \label{subsec:cohomred}

Given the linear action of a reductive group $G$ on a smooth projective variety $X$, a method is introduced in \cite{Kirwan1984} for inductively computing the $G$-equivariant Poincar\'e series of the semistable locus $X^{ss}$. When semistability coincides with stability this method gives the Poincar\'e series of the GIT quotient $X \gitq G$. The method of \cite{Kirwan1984} relies on the GIT-instability stratification associated to the linear action of a reductive group $G$ on a projective variety $X$, and thus we start by reviewing its properties below.

\subsubsection*{GIT-instability stratification.} 
Given the choice of an invariant inner product on the Lie algebra of maximal torus of $G$, there is a finite stratification \begin{equation} X = \bigsqcup_{\beta \in \mathcal{B}} S_{\beta} \label{GITinstabstrat} \end{equation} of $X$ satisfying the following properties (see \cite[\S 12]{Kirwan1984} for the construction of the strata): 
\begin{enumerate}[(i)]
\item the open stratum $S_0$ coincides with $X^{ss}$;
\item  for each $\beta \neq 0$, there is an isomorphism \begin{equation} 
S_{\beta} \cong G \times_{P_{\beta}} Y_{\beta}^{ss}  \label{sbetadescrip}
\end{equation} where $Y_{\beta}^{ss}$ is a locally closed subvariety of $X$ and $P_{\beta}$ is a parabolic subgroup of $G$;
\item for every $\beta \neq 0$, there exists a $P_{\beta}$-equivariant locally trivially fibration $p_{\beta}: Y_{\beta}^{ss} \to Z_{\beta}^{ss}$ with affine spaces as fibres, where $Z_{\beta}^{ss}$ is the semistable locus for the linear action of a maximal reductive subgroup $\operatorname{Stab} \beta$ of $P_{\beta}$ on a closed subvariety $Z_{\beta}$ of $X$. \label{fibration}
\end{enumerate} 

\subsubsection*{Equivariant Poincar\'e series of the semistable locus.} 
It can be shown using techniques from symplectic geometry (see \cite[Thm 5.4]{Kirwan1984}) that the stratification \eqref{GITinstabstrat} is equivariantly perfect. By definition (see \cite[\S 2.16]{Kirwan1984}), this means that  \begin{equation} P_t^G(X) = P_t^{G}(X^{ss}) +  \sum_{\beta \in \mathcal{B} \setminus \{0\}} t^{2 d(\beta)} P_t^{G}(S_{\beta}), \label{eqperfect}
\end{equation} where $d(\beta)$ is the complex codimension of $S_{\beta}$ in $X$. For simplicity we have assumed that all of the connected components for a given $S_{\beta}$ have the same dimension (so that $d(\beta)$ is well-defined). If this is not the case, the formula needs modification (see \cite[\S 8.12]{Kirwan1984}). 

The Poincar\'e series $P_t^G(X)$ and $P_t^G(S_{\beta})$ can be further simplified. For the former, we can use the fact that if $X$ is a smooth projective variety acted upon by a reductive group $G$, then $P_t^G(X) = P_t(X) P_t(BG)$ (see \cite[Prop 5.8]{Kirwan1984}). For the latter, we can use the isomorphism $S_{\beta} \cong G \times_{P_{\beta}} Y_{\beta}^{ss}$ (see \eqref{sbetadescrip} above) and the map $p_{\beta}: Y_{\beta}^{ss} \to Z_{\beta}^{ss}$ introduced in \ref{fibration} above. Using moreover the fact that $P_{\beta}$ is homotopically equivalent to $\operatorname{Stab} \beta$, we obtain that for each stratum $S_{\beta}$ there is an isomorphism of rational cohomology groups $H_G^{\ast}(S_{\beta}, \mathbb{Q}) \cong H_{\operatorname{Stab} \beta}^{\ast} (Z_{\beta}^{ss}, \mathbb{Q})$ from which it follows that $P_t^G(S_{\beta}) = P_t^{\operatorname{Stab}\beta} (Z_{\beta}^{ss}).$
Thus we obtain an inductive formula for the $G$-equivariant Poincar\'e series of $X^{ss}$: \begin{equation} P_t^G(X^{ss}) = P_t(X) P_t(BG) + \sum_{\beta \in \mathcal{B} \setminus \{0\}} t^{2 d(\beta)} P_t^{\operatorname{Stab} \beta} (Z_{\beta}^{ss}). \label{formulass}
\end{equation}

\subsubsection*{Poincar\'e series of $X \gitq G$ when $X^{ss}=X^{s}$.} If moreover the semistable and stable loci coincide, the action of $G$ on $X^{ss}$ has at worst finite stabiliser groups so that the equivariant rational cohomology of the semistable locus coincides with the rational cohomology of the GIT quotient. Thus we have that \begin{equation} P_t(X \gitq G) = P_t^G(X^{ss})  = P_t(X) P_t(BG) + \sum_{\beta \in \mathcal{B} \setminus \{0\}} t^{2 d(\beta)} P_t^{\operatorname{Stab} \beta} (Z_{\beta}^{ss}). \label{redformula} 
\end{equation}

\subsection{The externally graded unipotent case} \label{subsec:cohomuhat}

A similar approach to that described in Section \ref{subsec:cohomred} 
is adopted in \cite{Berczi2019} to obtain a formula for the Poincar\'e series of non-reductive GIT quotients of the form $X \gitq \hU$ when \eqref{ss=s(U)} is satisfied. However, the situation is simpler in this case than in the reductive case: an explicit, rather than inductive, formula can be obtained thanks to the existence of a distinguished subvariety $\zmin$ of $X$ which carries cohomological information about $\xmino$. The approach of \cite{Berczi2019} can be summarised as follows. 

Suppose that an externally graded unipotent group $\hU$ acts linearly on a smooth projective variety $X$ such that the condition \eqref{ss=s(U)} is satisfied. Then by \thref{uhatthm} (the $\hU$-theorem), there exists a projective quotient $X \gitq \hU$ which is a geometric quotient for the action of $\hU$ on the semistable locus $X^{ss} = \xmino \setminus U \zmin$. By \cite[Cor 5.4]{Berczi2019}, the stratification \begin{equation} \xmino = X^{ss} \sqcup U \zmin  \label{perfstrat}
\end{equation} of $\xmino$ is equivariantly perfect so that $$P_t^{\hU} (\xmino) = P_t^{\hU}(X^{ss}) + t^{2d} P_t^{\hU}(U \zmin)$$ where $d = \operatorname{dim} X - \operatorname{dim} U - \operatorname{dim} \zmin$ is the complex codimension of $U\zmin$ in $\xmino$. Moreover, since the condition \eqref{ss=s(U)} is satisfied, the stabiliser groups of points in $X^{ss}$ are finite and thus as in the classical case we have that $P_t(X \gitq \hU) = P_t^{\hU}( X^{ss})$.

The fact that $\xmino$ retracts onto $\zmin$ and that $\hU$ is homotopically equivalent to the grading $\GG_m$ allows further simplification of the formula for $P_t^{\hU}( X^{ss})$. Indeed, it implies that $P_t^{\hU} (\xmino) = P_t^{\GG_m} (\zmin)$, and since $\GG_m$ acts trivially on $\zmin$, there is an equality $$P_t^{\GG_m}(\zmin)  = P_t(\zmin) P_t(B\GG_m) = P_t(\zmin) \frac{1}{1-t^2}.$$ Thus replacing and rearranging the terms of \eqref{perfstrat}, we obtain that \begin{equation} P_t(X \gitq \hU) = P_t(\zmin) \frac{1 - t^{2d} }{1-t^2}. \label{uhatformula}\end{equation}  This formula contrasts with \eqref{redformula} in the reductive case where an inductive procedure is needed to compute $P_t(X \gitq G) = P_t^G(X^{ss})$; for the $\hU$-action there is a distinguished subvariety $\zmin$ of $X$ which carries much of the cohomological information of $X \gitq \hU$.

\subsection{The general non-reductive case}
\label{subsec:cohomH} 

As we have seen in Section \ref{subsec:ss=snonred}, if $H = U \rtimes R$ is a linear algebraic group with internally unipotent radical $U$ acting linearly on an irreducible projective variety $X$, such that \eqref{ss=s(U)} is satisfied, then a GIT quotient $X \gitq H$ can be constructed in stages, first by quotienting by $\hU$, and then by quotienting by the residual reductive group $R_{\l}: =R /\lambda(\GG_m)$ where $\lambda(\GG_m) \subseteq Z(R)$ is the grading one-parameter subgroup. If we moreover assume that semistability coincides with stability for the action of $R_{\l}$ on $X \gitq \hU$, then the formulae \eqref{redformula} and \eqref{uhatformula} from the above Sections \ref{subsec:cohomred} and \ref{subsec:cohomuhat} for the Poincar\'e series of classical and $\hU$-GIT quotients can be combined to produce an inductive formula for the Poincar\'e series of GIT quotients by linear algebraic groups $H$ with internally graded unipotent radical.

However, the resulting formula involves the GIT-instability stratification for the action of $R_{\l}$ on the intermediate quotient $X \gitq \hU$ (the choice of an invariant inner product on $H$ induces one on $R_{\l}$), which may be difficult to describe in practice (for example if the intermediate quotient has no obvious modular interpretation, in the case where we are using GIT to construct a moduli space). For this reason, a different approach is given in \cite{Berczi2019} for computing the Poincar\'e series of the GIT quotient $X \gitq H$, resulting in a formula which depends on information only about $X$, rather than about the intermediate quotient $X \gitq \hU$ as well. This approach requires assuming that semistability coincides with stability for the action of $R_{\l}$ on $\zmin$, in addition to assuming that \eqref{ss=s(U)} is satisfied and that semistability coincides with stability for the action of $R_{\l}$ on $X \gitq \hU$. 

Under this assumption, by \cite[Lem 5.6]{Berczi2019} we have that $$X \gitq H = \left( p^{-1}(\zmin^{ss,R_{\l}}) \setminus U  \zmin^{ss,R_{\l}}  \right) / H.$$ The stratification approach from Section \ref{subsec:cohomuhat}, this time applied to $p^{-1}(\zmin^{ss,R_{\l}})$ instead of $\xmino$, can then be used to obtain that \begin{equation} 
P_t(X \gitq H)  = P_t(\zmin \gitq R_{\l}) \frac{1 - t^{2d}}{1-t^2}. \label{Hformula}
\end{equation} 
The Poincar\'e series $P_t(\zmin \gitq R_{\l})$ can in turn be computed using \eqref{redformula}, which involves the GIT-instability stratification for the action of $R_{\l}$ on $\zmin$, rather than a GIT-instability stratification on the intermediate quotient $X \gitq \hU$ which is typically harder to describe explicitly.

\section{Cohomology when `semistability does not coincide with stability'} 
\label{sec:cohomssneqspart2}
In this section we prove \thref{maintheorem}, which provides a method for computing the Poincar\'e series of non-reductive GIT quotients when `semistability does not coincide with stability' and when the internally graded group $H$ has an abelian unipotent radical with the grading $\GG_m$ acting with a single weight on $\operatorname{Lie} U$. That is, we study the Poincar\'e series of the non-reductive GIT quotient obtained after the sequence of non-reductive blow-ups. We recall that we are working over the field $k = \mathbb{C}$ of complex numbers.  

In Section \ref{subsec:nrsmoothbulocus} we prove that the centres of the blow-ups are smooth if the initial variety is smooth (see \thref{smoothnessofbulocus}), a necessary preliminary result for proving \thref{maintheorem}. In Section \ref{subsec:simplesnonred} we use this result to establish a formula for the Poincar\'e series of the quotient $\hX \gitq \hU$ by the externally graded unipotent group $\hU : = U \rtimes \lambda(\GG_m) \subseteq H$, establishing parts \ref{part1} and \ref{part2} of \thref{maintheorem} (see \thref{nonredformula}). In Section \ref{subsec:gennonred} we extend this formula to the Poincar\'e series of the quotient $\hX \gitq H$ under an additional assumption on the action of $H / \hU$ to prove part \ref{part3} of \thref{maintheorem} (see \thref{mostgeneralformula}).

\subsection{Smoothness of the centres of the blow-ups}  \label{subsec:nrsmoothbulocus}

In this section we prove that the centres of the blow-ups from Non-Reductive GIT are smooth if the initial variety is smooth. This result immediately implies (see \thref{Uabelian} below) that if $X$ is smooth then the open Bialynicki-Birula stratum $\hX^0_{\operatorname{min}}$ for the resulting variety is also smooth, and thus that the non-reductive GIT quotient $\hX \gitq \hU$ has at worst finite quotient singularities.

\begin{theorem}[The centres of the blow-ups are smooth if $X$ is smooth]\thlabel{smoothnessofbulocus} 
Let $\hU: = U \rtimes \GG_m$ where $U$ is an abelian unipotent group and $\GG_m$ acts on $\operatorname{Lie} U$ via the adjoint action with a strictly positive weight. Suppose that $\hU$ acts linearly on an irreducible projective variety $X$. 
Then $C_{\operatorname{max}}(X^0_{\operatorname{min}},\hU)$ is smooth at $x \in C_{\operatorname{max}}(X^0_{\operatorname{min}},\hU)$ if $X$ is smooth at $x$. 
In particular, if $X$ is smooth then $C_{\operatorname{max}}(X^0_{\operatorname{min}},\hU)$ is a smooth subvariety of $\xmino$.
\end{theorem}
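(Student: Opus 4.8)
The plan is to reduce \thref{smoothnessofbulocus} to the smoothness result \thref{proofinspecialcase} already proved in Section \ref{subsec:Uhat}, by identifying the blow-up centre $C_{\operatorname{max}}(\xmino,\hU)$ with a fixed-point-type locus for a smaller group. First I would analyse the local structure of the stabiliser. Fix $x \in C_{\operatorname{max}}(\xmino,\hU)$; since $x \in \xmino$ its stabiliser in $\hU$ is contained in $U$ (the grading $\GG_m$ moves any point of $\xmino\setminus\zmin$ towards $\zmin$, and on $\zmin$ it acts with the minimal weight, so no nontrivial element of $\lambda(\GG_m)$ can fix a point of $\xmino$ unless... — more carefully, $\operatorname{Stab}_{\hU}(x)\cap\lambda(\GG_m)$ is finite, and since $\hU/U\cong\GG_m$ is connected one-dimensional, $\operatorname{Stab}_{\hU}(x)^0\subseteq U$). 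Because $U$ is abelian, the connected stabilisers $\operatorname{Stab}_U(x)^0$ are subgroups of the fixed vector group $U$, and over the locus of maximal stabiliser dimension $d_{\operatorname{max}}$ I expect these subgroups to be \emph{constant}: the adjoint $\GG_m$-action on $\Lie U$ has a single weight, so $\GG_m$ acts on the Grassmannian of $d_{\operatorname{max}}$-dimensional subspaces of $\Lie U$ trivially (scaling does not change a subspace), hence for $x\in\zmin$ the subalgebra $\Lie\operatorname{Stab}_U(x)$ is $\lambda(\GG_m)$-invariant automatically, and a semicontinuity/rigidity argument shows it is locally constant on $C_{\operatorname{max}}$. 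Let $U' := \operatorname{Stab}_U(x)^0 \leq U$ be this fixed connected subgroup (abelian unipotent), and set $\hU' := U' \rtimes \lambda(\GG_m)$, which again has $\lambda(\GG_m)$ acting with a single positive weight on $\Lie U'$.

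The key step is then to show that, near $x$, the locus $C_{\operatorname{max}}(\xmino,\hU)$ coincides with $X^{\hU'}\cap\xmino$ (or at least with a union of connected components thereof, which suffices since smoothness is local). One inclusion is by construction: points of $C_{\operatorname{max}}$ near $x$ are fixed by $U'$ by the rigidity argument above, and they lie in $\xmino$ which is $\lambda(\GG_m)$-invariant, so — after checking they are actually pointwise fixed by $\lambda(\GG_m)$ is \emph{not} what we want; rather we want them fixed by $U'$ only, and $X^{U'}\cap\xmino$ is the relevant object. For the reverse inclusion, a point $y\in X^{U'}\cap\xmino$ near $x$ has $\operatorname{Stab}_U(y)\supseteq U'$ so $\dim\operatorname{Stab}_U(y)\geq d_{\operatorname{max}}$, forcing equality by maximality, hence $y\in C_{\operatorname{max}}$. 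So locally around $x$, $C_{\operatorname{max}}(\xmino,\hU) = X^{U'}\cap\xmino$ as sets; one should also check scheme structures agree, but for smoothness of the underlying variety the set-theoretic identification on a neighbourhood is enough once we know $C_{\operatorname{max}}$ is reduced (it is, being a locally closed subvariety of the variety $X$). Now apply \thref{proofinspecialcase} to the group $\hU'$ acting on $X$ (noting $X^{U'}\cap\xmino = X^{\hU'}\cap\xmino$ since points of $\xmino$ fixed by $U'$ and lying over... — here I use that $\xmino$ for the $\hU$-action equals $\xmino$ for the $\hU'$-action because both are defined by $\lambda(\GG_m)$): since $X$ is smooth at $x$, the theorem gives that $X^{\hU'}\cap\xmino$ is smooth at $x$, hence $C_{\operatorname{max}}(\xmino,\hU)$ is smooth at $x$.

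The main obstacle I anticipate is the rigidity claim that $\operatorname{Stab}_U(x)^0$ is locally constant on $C_{\operatorname{max}}$, or at least constant on connected components through points where $X$ is smooth. One has to rule out a family of distinct abelian subalgebras of the fixed single-weight vector space $\Lie U$ all of dimension $d_{\operatorname{max}}$ varying with the point. The single-weight hypothesis on the adjoint action is essential here: it makes the $\GG_m$-action on $\Lie U$ a pure scaling, so every linear subspace is $\GG_m$-invariant and one cannot use $\GG_m$-invariance alone to pin down the subalgebra — instead I would argue by upper semicontinuity of stabiliser dimension together with the observation that $C_{\operatorname{max}}$ is the locus where the dimension is \emph{exactly} maximal, combined with a constructibility/generic-flatness argument for the family $x\mapsto \Lie\operatorname{Stab}_U(x)$ over $C_{\operatorname{max}}$, upgraded to local constancy using that the total space $X$ is smooth (hence $C_{\operatorname{max}}$, while possibly not smooth a priori, sits inside a smooth ambient space so its tangent behaviour is controlled). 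An alternative route, avoiding the rigidity issue entirely, is to work directly: show that the tangent space to $C_{\operatorname{max}}$ at $x$ has the expected dimension by relating it via the construction to the tangent space of the torus-fixed locus in the auxiliary variety $Y_d$ of Section \ref{subsec:Uhat}, mirroring the proof of \thref{proofinspecialcase} but with the extra stabiliser $U'$ carried along; this is essentially what \thref{proofinspecialcase} already packages, so the cleanest exposition is the reduction described above with the rigidity lemma isolated as a separate preliminary claim.
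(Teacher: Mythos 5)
Your instinct to reduce to \thref{proofinspecialcase} is correct --- the paper's proof does exactly that --- but both of the steps you yourself flag as delicate genuinely fail, and the repair is structurally different from anything in your sketch. First, the rigidity lemma is false: $\operatorname{Stab}_U(x)^0$ need not be locally constant on $C_{\operatorname{max}}$. Take $X=\PP^2$ with coordinates $[a:b:c]$, let $U=\GG_a^2$ act by $(u_1,u_2)\cdot[a:b:c]=[a:b:c+u_1a+u_2b]$ and let $\lambda(t)\cdot[a:b:c]=[a:b:tc]$; then $\GG_m$ acts on $\operatorname{Lie}U$ with the single positive weight $1$, $\zmin=\{[a:b:0]\}$, and every point of $\zmin$ has the one-dimensional stabiliser $\{u_1a+u_2b=0\}$, which rotates inside $\operatorname{Lie}U$ as $[a:b]$ varies. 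No semicontinuity or generic-flatness argument can make this locally constant, and (as you half-suspect) the single-weight hypothesis gives you nothing here, since every subspace of $\operatorname{Lie}U$ is $\GG_m$-invariant.

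Second, even granting constancy of $U'$, the proposed identification cannot hold: a point of $\xmino$ fixed by $\lambda(\GG_m)$ equals its own limit under $\lambda$ and hence lies in $\zmin$, so $X^{\hU'}\cap\xmino=X^{U'}\cap\zmin$ and your claimed equality $X^{U'}\cap\xmino=X^{\hU'}\cap\xmino$ is false. Meanwhile the actual blow-up centre is $C_{\operatorname{max}}(\xmino,\hU)=C_{\operatorname{max}}(\xmino,U)\cap U\zmin$ (on $U\zmin$ the $\hU$-stabiliser acquires an extra conjugate of $\lambda(\GG_m)$, contrary to your opening assertion that $\operatorname{Stab}_{\hU}(x)^0\subseteq U$ on $\xmino$), and this locus contains the full $U$-orbit of each of its points, so its dimension exceeds that of $X^{U'}\cap\zmin$ by $\dim U-\dim U'$ whenever $U'\neq U$; smoothness of the smaller set does not give smoothness of the centre. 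The paper repairs both defects with a single move absent from your proposal: choose a complement $U'^{\perp}$ of $U'=\operatorname{Stab}_U(x_0)$ in the abelian group $U$. On a small neighbourhood $N_0$ of $x_0$, membership of $C_{\operatorname{max}}(\xmino,U)$ is characterised by the \emph{open} condition that $\operatorname{Stab}_U(x)$ stay complementary to $U'^{\perp}$ (no constancy needed), $U'^{\perp}$ acts on $N_0$ with trivial stabilisers, and in the geometric quotient $Y=N_0/U'^{\perp}$ the locus $C_{\operatorname{max}}(\xmino,\hU)\cap N_0$ becomes precisely the fixed-point set of the residual externally graded group $\hU/U'^{\perp}$: the varying stabilisers all surject onto $U/U'^{\perp}$, and the $U$-orbit directions are absorbed into the fibres of $\pi$. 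One then applies \thref{proofinspecialcase} to a projective completion $\overline{Y}$ of $Y$ and pulls smoothness back along the quotient map.
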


\begin{cor}[If $X$ is smooth then $\hX \gitq \hU$ is cohomologically smooth when $U$ is abelian] \thlabel{Uabelian}
Let $\hU: = U \rtimes \GG_m$ where $U$ is an abelian unipotent group and $\GG_m$ acts on $\operatorname{Lie} U$ via the adjoint action with a strictly positive weight. Suppose that $\hU$ acts linearly on a smooth projective variety $X$ such that \eqref{snonempty} is satisfied. Then the projective geometric quotient $\hX \gitq \hU$ obtained from the blow-up construction of \thref{uhatwbups} has at worst finite quotient singularities. 
\end{cor}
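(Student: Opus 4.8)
The plan is to deduce the corollary from \thref{smoothnessofbulocus} together with the explicit recipe for the blow-up construction recalled above: the smoothness of the blow-up centres propagates through the whole tower of blow-ups, so that the relevant open subset of $\hX$ is smooth, and the statement about singularities of $\hX\gitq\hU$ then follows because $\hU$ acts on this locus with $U$ acting freely and $\hU/U\cong\GG_m$ with finite stabilisers.

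\textbf{Step 1: $\hX^0_{\operatorname{min}}$ is smooth.} Since \eqref{snonempty} holds, \thref{uhatwbups} produces $\hX$ as the end of a finite tower $X = X^{(0)} \leftarrow X^{(1)} \leftarrow \cdots \leftarrow X^{(N)} = \hX$, where $X^{(j+1)}$ is the blow-up of $X^{(j)}$ along the closure of $C_{\operatorname{max}}((X^{(j)})^0_{\operatorname{min}},\hU)$, the tower terminating because the maximal stabiliser dimension on the minimal stratum strictly decreases at each step (\cite[Prop 8.8]{Berczi2020}). I would show by induction on $j$ that $(X^{(j)})^0_{\operatorname{min}}$ is smooth; the case $j=0$ is the hypothesis. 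For the inductive step, $X^{(j)}$ is an irreducible projective variety with a linear $\hU$-action, so \thref{smoothnessofbulocus} applies to it — its only hypothesis being smoothness at the relevant points, which by induction holds all along $(X^{(j)})^0_{\operatorname{min}}$, where the centre lives — and yields that $C_{\operatorname{max}}((X^{(j)})^0_{\operatorname{min}},\hU)$ is a smooth closed subvariety of $(X^{(j)})^0_{\operatorname{min}}$. Since blowing up commutes with restriction to an open subset and, by construction, $(X^{(j+1)})^0_{\operatorname{min}}$ lies over $(X^{(j)})^0_{\operatorname{min}}$ and is an open subvariety of the blow-up of $(X^{(j)})^0_{\operatorname{min}}$ along that smooth centre, it is smooth. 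Taking $j=N$ gives that $\hX^0_{\operatorname{min}}$ is smooth.

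\textbf{Step 2: finite quotient singularities.} By \thref{uhatwbups}, condition \eqref{ss=s(U)} holds for the $\hU$-action on $\hX$, so by \thref{uhatthm} (after the usual modification of the linearisation) $\hX\gitq\hU$ is a projective geometric quotient of the stable locus $\hX^s := \hX^0_{\operatorname{min}} \setminus U\,Z(\hX)_{\operatorname{min}}$ by $\hU$, and $\hX^s$ is smooth by Step 1. On $\hX^s$ the unipotent radical $U$ acts with trivial stabilisers and — this being the first of the two roles of the grading in the $\hU$-theorem — admits a geometric quotient $\hX^s \to \hX^s/U$ which is a Zariski-locally trivial bundle with fibre $U\cong\mathbb{A}^{\dim U}$; hence $\hX^s/U$ is smooth. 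The residual group $\hU/U\cong\GG_m$ acts on $\hX^s/U$ with closed orbits and only finite stabilisers, since a positive-dimensional subgroup of $\GG_m$ fixing a point $x$ would force $x = \lim_{t\to0} t\cdot x \in Z(\hX)_{\operatorname{min}}$, contradicting $x\notin U\,Z(\hX)_{\operatorname{min}}$. As $\hX\gitq\hU = (\hX^s/U)/\GG_m$ and $\GG_m$ is reductive, Luna's slice theorem exhibits this quotient \'etale-locally as a quotient of a smooth variety by a finite group, so $\hX\gitq\hU$ has at worst finite quotient singularities. (When $X$ is not irreducible one argues on each connected component.)

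\textbf{Expected main obstacle.} The delicate point is the bookkeeping in Step 1: one must verify that at each stage the minimal stratum $(X^{(j+1)})^0_{\operatorname{min}}$ of the blow-up really does lie over $(X^{(j)})^0_{\operatorname{min}}$ and coincides, up to a further open restriction, with the blow-up of $(X^{(j)})^0_{\operatorname{min}}$ along the centre furnished by \thref{smoothnessofbulocus}, so that smoothness is preserved along the entire tower — this requires unwinding the precise construction of $\hX$ in \cite{Berczi2020} and tracking the linearisation through each blow-up. Granting this, the passage to finite quotient singularities in Step 2 is essentially formal.
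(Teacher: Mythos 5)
Your proposal is correct and follows exactly the route the paper intends: the paper states this corollary without a separate proof, asserting that \thref{smoothnessofbulocus} "immediately implies" that $\hX^0_{\operatorname{min}}$ is smooth (the induction through the blow-up tower that you spell out in Step 1, which the paper itself sketches later in the proof of \thref{mostgeneralformula}), after which finite quotient singularities follow from \eqref{ss=s(U)} giving finite stabilisers on the stable locus. Your Step 2 and your identification of the bookkeeping over the minimal strata (via \cite[Prop 8.5]{Berczi2020}) as the only delicate point match the paper's treatment.
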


We will prove \thref{smoothnessofbulocus} using \thref{mainsmoothnessresult}, by reducing to the case where the maximal dimension of unipotent stabiliser group for points in $\xmino$  coincides with the dimension of $U$. Indeed in this case $C_{\operatorname{max}}(\xmino,\hU) = X^U \cap \zmin$ which is smooth if $X$ is smooth by \thref{mainsmoothnessresult}.

\begin{proof}[Proof of \thref{smoothnessofbulocus}]
We first show that $$C_{\operatorname{max}}(\xmino,\hU) = C_{\operatorname{max}}(\xmino, U) \cap U \zmin. \label{usefuleq}$$ For the inclusion, we note that if $x \in C_{\operatorname{max}}(\xmino,\hU)$, then it must have the same unipotent stabiliser dimension as $p(x)$. This is because $\GG_m$ normalises $U$, which implies that $\operatorname{dim} \operatorname{Stab}_U(p(x)) \geq \operatorname{dim} \operatorname{Stab}_U(x)$ for every $x \in \xmino$ (see \cite[Rk 5.7]{Berczi2020}). Moreover, since $p(x) \in \zmin$ is fixed by $\GG_m$, then $x$ must also be fixed by a one-parameter subgroup of $\hU$, which will be a conjugate of the grading $\GG_m$ in $\hU$ and thus we obtain that $x \in U \zmin$. For the reverse inclusion, if $x  = u \cdot z \in  U \zmin$ has maximal dimension stabiliser group in $U$, then $ u^{-1} \cdot x = z$, so that $u^{-1} \cdot x \in \zmin$ and hence $x$ is fixed by $u^{-1}  \GG_m u^{-1}$. Thus $\operatorname{dim} \operatorname{Stab}_{\hU}(x) = \operatorname{dim} \operatorname{Stab}_U (x) +1$ and so $x$ lies in $C_{\operatorname{max}}(\xmino,\hU)$. 

Suppose that $X$ is smooth at a point $x_0 \in C_{\operatorname{max}}(\xmino, U) \cap U \zmin$. Our aim is to show that $ C_{\operatorname{max}}(\xmino, U) \cap U \zmin$ is smooth at $x_0$. Our proof of this result relies on the set-up established in \cite[\S 7.1, pp 33-34]{Berczi2020} to prove part of \thref{uhatthm}, namely to construct a locally trivial $U$-quotient of $\xmino$ with an explicit $\GG_m$-equivariant projective completion, given the action of an externally graded unipotent group $\hU$ on a projective variety $X$ satisfying \eqref{ss=s(U)}. Although in our case we are of course not assuming that \eqref{ss=s(U)} is satisfied (since this is the reason we are doing blow-ups), we will only use results which remain valid even without the assumption \eqref{ss=s(U)}.

Let $U' = \operatorname{Stab}_U (x_0)$. Since $U$ is abelian, we can choose a complementary subgroup $U'^{\perp}$ of $U'$ in $U$, so that $U = U' \times U'^{\perp}$. Then, for $x$ in a small enough neighbourhood $N_0$ of $x_0$ in $\xmino$ (which we can take to be invariant under $U'^{\perp}$), we have that $U'^{\perp}$ is complementary to $\operatorname{Stab}_U(x)$ if and only if $x \in C_{\operatorname{max}}(\xmino,U)$. Moreover the action of $U'^{\perp}$  on $N_0$ has trivial stabilisers, so that $N_{0}$ is contained in the stable locus for the action of $U'^{\perp}$ on $\xmino$ (by \cite[Thm 8.16 (1)]{Berczi2020}). As a result, by \cite[Prop 7.1]{Berczi2020} we obtain a geometric quotient $\pi: N_0 \to Y := N_0 / U'^{\perp}$, which has an induced linear action of $\hU / U'^{\perp} \cong U / U'^{\perp} \rtimes \GG_m$.

We now show that given $x \in N_0$ and $y = \pi(x) \in Y$, we have that $x \in  C_{\operatorname{max}}(\xmino,U) \cap U \zmin$ if and only if $y \in Y^{\hU / U'^{\perp} }$. If $x \in N_0$, then by the construction of $N_0$ we can assume that $U'^{\perp}$ is complementary to $\operatorname{Stab}_U (x)$ so that $U = U'^{\perp} \times \operatorname{Stab}_U(x)$. Note that there is an equality \begin{equation} \operatorname{Stab}_{U/U'^{\perp}}(\pi(x)) = (U'^{\perp} \times \operatorname{Stab}_U(x) ) / U'^{\perp}. \label{staby}
\end{equation} Thus if $x \in C_{\operatorname{max}}(\xmino,U)$ then $y = \pi(x)$ must be fixed by $(U'^{\perp} \times \operatorname{Stab}_U(x))/U'^{\perp} = U /U'^{\perp}$. If moreover $x \in U \zmin$, then $y = \pi(x)$ is fixed by the whole group $\hU / U'^{\perp}$, since $x$ is also fixed by a $\GG_m$ inside $\hU$. To show the converse, suppose that $x \in N_0$ does not have maximal dimension stabiliser group in $U$. Using \eqref{staby} again we see that $y$ cannot be fixed by all of $U/ U'^{\perp}$, since in this case $U'^{\perp}  \times \operatorname{Stab}_U(x) \subseteq U$. Therefore $y$ is not fixed by $\hU / U'^{\perp} $. This shows that $x \in C_{\operatorname{max}}(\xmino,U) \cap U \zmin$ if and only if $y \in Y^{\hU / U'^{\perp} }$.

Since $\pi$ is a geometric quotient with trivial stabiliser groups, it follows that $C_{\operatorname{max}}(\xmino,U) \cap U \zmin$ is smooth at $x_0$ if and only if $Y^{\hU / U'^{\perp} }$ is smooth at $y_0$. We note that since by assumption $X$ is smooth at $x_0$, we also have that $Y$ is smooth at $y_0$. 

By \cite[pp 33-34]{Berczi2020}, there exists an integer $s >0$ such that $Y$ admits a $\hU / U'^{\perp} $-equivariant locally closed $\GG_m$-equivariant embedding into the projective space $\mathbb{P}(W^{\vee})$ where $W = H^0(X,L^{\otimes s})^U$ (here $L$ denotes the ample line bundle on $X$ with respect to which the action is linearised). 
By \cite[Lem 7.6]{Berczi2020}, the image of the locally closed $\GG_m$-equivariant embedding $Y \to \mathbb{P}(W^{\vee})$ is contained in $\mathbb{P}(W^{\vee})^0_{\operatorname{min}}$, and this map restricts to a closed embedding $Y \to \mathbb{P}(W^{\vee})^0_{\operatorname{min}}$. We can therefore identify $Y$ as a subvariety of $\mathbb{P}(W^{\vee})^0_{\operatorname{min}}$.  We let $\overline{Y}$ denote the closure of $Y$ in this projective space. Then $\overline{Y}$ is a projective variety with a linear action of the externally graded unipotent group $\hU / U'^{\perp} $.

Since $y_0 \in Y$, we have that $y_0 \in \overline{Y}^0_{\operatorname{min}}  = \overline{Y} \cap \mathbb{P}(W^{\vee})^0_{\operatorname{min}}$. Moreover, from above we have that $y_0 \in Y^{\hU / U'^{\perp}}$ since $x_0 \in C_{\operatorname{max}}(\xmino, U) \cap U \zmin$. Therefore $y_0 \in  \overline{Y}^{\hU / U'^{\perp} } \cap \overline{Y}^0_{\operatorname{min}}  = \overline{Y}^{U/ U'^{\perp}} \cap Z(\overline{Y})_{\operatorname{min}}$. Since $Y$ is smooth at $y_0$, the projective variety $\overline{Y}$ is also smooth at $y_0$. We can therefore apply \thref{proofinspecialcase} to conclude that $\overline{Y}^{U/ U'^{\perp}} \cap Z(\overline{Y})_{\operatorname{min}}  =  \overline{Y}^{\hU / U'^{\perp} } \cap \overline{Y}^0_{\operatorname{min}}$ is smooth at $y_0$. And since $Y^{\hU / U'^{\perp} } \cap \overline{Y}^0_{\operatorname{min}}$ is an open subset of $\overline{Y}^{\hU / U'^{\perp} }$, it follows that $Y^{\hU / U'^{\perp} }$ is smooth at $y_0$. This implies by our argument that $C_{\operatorname{max}}(\xmino,\hU)$ is smooth at $x_0$. 
\end{proof}  

\subsection{The externally graded unipotent case} 

\label{subsec:simplesnonred}

If an externally graded unipotent group $\hU := U \rtimes \GG_m$ acts linearly on an irreducible projective variety $X$ and \eqref{ss=s(U)} is not satisfied, then provided \eqref{snonempty} is satisfied, by \thref{uhatwbups} a sequence of equivariant blow-ups of $X$ can be performed to obtain a variety $\hX$ for which \eqref{ss=s(U)} is satisfied for an induced linear action of $\hU$ on $\hX$, so that $\hX \gitq \hU$ is well-defined. In this section we establish a formula (see \thref{nonredformula}) for the Poincar\'e series of $\hX \gitq \hU$ in terms of cohomological information about $X$, under the assumptions that $X$ is smooth, that $U$ is abelian and that $\GG_m$ acts with a single weight on $\operatorname{Lie} U$, proving parts \ref{part1} and \ref{part2} of \thref{maintheorem} in the process.

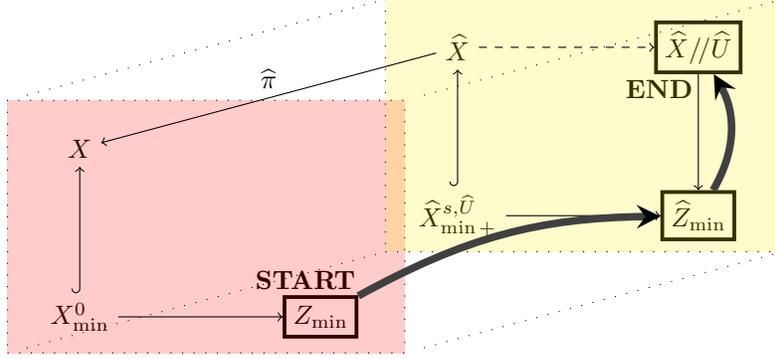
\begin{figure}
    \centering

		\begin{tikzpicture}[scale=0.8,x=(15:6.5cm),y=(0:4cm), z=(90:4cm),
    cross line/.style={preaction={draw=white,-,line width=3pt}},
    equal/.style={double distance=1pt},
    roundnode/.style={circle, draw=green!50, fill=gray!5, line width = 0.7mm},
    calcarrow/.style={line width=1mm,scale=3,>=stealth,draw=green!50}]
    

\node (c1) at (1,-1.3,0.55) {  } ;
\node (c2) at (1,-1.3,1.6) { };
\node (c3) at (1,0.35,0.55) { };
\node (c4) at (1,0.35,1.6) { };
\node (c5) at (0,-1.3,0.55) { } ;
\node (c6) at (0,-1.3,1.6) { } ;
\node (c7) at (0,0.35,0.55) { } ;
\node (c8) at (0,0.35,1.6) { } ;

\node (1) at (0,-1,1.4) {\( X \)};
\node[draw,line width = 0.5mm,label={[black!100,shift={(0,-0.05,-0.01)},font=\bfseries]{START}}] (1') at (0,0,0.7) {\( \zmin \)};
\node (2) at (0,-1,0.7) {\( \xmino \)};
\node (16)[draw, line width = 0.5mm,label={[black!100,shift={(0,-0.13,-0.29)},font=\bfseries]{END}}] at (1,0,1.4) {\(   \hX \gitq \hU  \)};
\node (19) at (1,-1,1.4) {\( \hX \)};
\node[draw,line width=0.5mm] (19') at (1,0,0.7) {\( \widehat{Z}_{\operatorname{min}}  \)};
\node (20) at (1,-1,0.7) {\(  \hX^{s,\hU}_{\operatorname{min}+} \)};

\filldraw  [fill = red, draw=black,loosely dotted, opacity=0.2] (c5) rectangle (c8);
\filldraw  [fill = yellow, draw=black,loosely dotted, opacity=0.2] (c1) rectangle (c4);
  
\draw [black,loosely dotted] (c1) rectangle (c4);
\draw  [black,loosely dotted] (c5) rectangle (c8);
\draw [black, loosely dotted] (c3) -- (c7);
\draw [black, loosely dotted] (c4) -- (c8);
\draw [black, loosely dotted] (c1) -- (c5); 
\draw [black, loosely dotted] (c2) -- (c6);

\draw [->] (20) -- (19');
\draw [right hook ->](2) -- (1);
\draw [->] (2) -- (1');

\draw [dashed, ->] (19) -- (16);

\draw [right hook ->] (20) -- (19);
\draw [->] (19) -- (1) node[pos=0.5,above] {$\widehat{\pi}$} ;

	\draw [calcarrow,->,draw=black!75] (1') to [bend left=14] (19');

  \draw [calcarrow,->,draw=black!75] (19') to [bend right=30] (16);
\draw[->] (16) to (19');

		\end{tikzpicture}

	\caption{Strategy for computing the Poincar\'e series of $\hX \gitq G$ in terms of the Poincar\'e series of $\zmin$, for an externally graded unipotent group $\hU = U \rtimes \GG_m$ acting linearly on a smooth complex projective variety $X$ such that there exists a point $z \in \zmin$ with $\operatorname{Stab}_U (z) = \{e\}$. The sequence of bold arrows indicates that the Poincar\'e series of $\hX \gitq \hU$ can be computed from the Poincar\'e series of $\widehat{Z}_{\operatorname{min}}$ using \eqref{uhatformula}, which can in turn be computed from the Poincar\'e series of $\zmin$ by studying the blow-up construction of \thref{uhatwbups}. 	}
 \label{fig:ssneqsuhat} 
		
\end{figure}

To prove \thref{nonredformula}  
we will proceed in two steps: first by establishing in Section \ref{afteronebu} a formula for the Poincar\'e series after a single blow-up (see \thref{onebu}), and then by applying \thref{onebu} iteratively for each stage of the blow-up in Section \ref{combining}. 

\subsubsection{Poincar\'e series after one blow-up}  \label{afteronebu}

The statement of \thref{onebu} relies on the following notation, which we will also use in \thref{nonredformula}.  

\begin{notation} \thlabel{notation1}
In \thref{cmaxnotation}, given the action of a group $H$ on a variety $Y$, we defined for any $d \in \mathbb{N}$ the subvariety $C_{d}(Y,H)$ of $Y$ consisting of points in $Y$ with $d$-dimensional stabiliser group in $H$. We now also define for any $d \in \mathbb{N}$ the subvariety $$C_{\geq d}(Y,H) : = \left\{ y \in Y \ | \ \operatorname{dim} \operatorname{Stab}_{H} (y) \geq d \right\} $$ of $Y$, and we note that this is a closed subvariety of $Y$ by the upper semi-continuity of stabiliser dimensions. 

When $H = \hU$ is an externally graded unipotent radical acting linearly on a projective variety $X$, then there is an equality   \begin{equation} C_{\geq d}(\xmino,\hU) \cap \zmin = C_{\geq d-1}(\zmin,U). \label{simplifyingeq} \end{equation}  Indeed, the grading condition on $\hU$ implies that $$\operatorname{dim} \operatorname{Stab}_U (p(x)) \geq \operatorname{dim} \operatorname{Stab}_U (x)$$ for any $x \in \xmino$, where we recall that $p: \xmino \to \zmin$ is the retraction map determined by the grading one-paramater subgroup of $\hU$.  

\end{notation} 

In \thref{onebu} below we give a formula for the Poincar\'e series of the $\zmin$ obtained after a single blow-up. The set-up is as follows. Let $\hU = U \rtimes \GG_m$ where the unipotent radical $U$ is abelian and $\GG_m$ acts with strictly positive weights on $\operatorname{Lie} U$ via the adjoint action. Suppose that $\hU$ acts linearly on a smooth projective variety $X$ and that \eqref{snonempty} is satisfied. Let $\overline{Y}$ denote the blow-up of $X$ along the closure of $C_{\operatorname{max}}(\xmino,\hU)$ in $X$, with a linear $\hU$-action obtained by pulling back the linearisation of the $\hU$-action on $X$ and perturbing by a small multiple $\epsilon$ of the exceptional divisor. Let $\pi: Y  \to \xmino$ denote the restriction of the blow-up to $\xmino$, or equivalently the blow-up of $\xmino$ along $C_{\operatorname{max}}(\xmino,\hU)$. By choosing $\epsilon$ small enough, the open subvariety $\overline{Y}^0_{\operatorname{min}}$ of $\overline{Y}$ will be contained in $Y$ (see \cite[Prop 8.5]{Berczi2020}). Since it is the open subvariety $\overline{Y}^0_{\operatorname{min}}$ that we are interested in, rather than the whole of $\overline{Y}$, to simplify notation we work with $Y$ instead of $\overline{Y}$. Nevertheless, where the definitions require a projective variety, the variety $Y$ should be replaced with $\overline{Y}$.

\begin{prop}[Poincar\'e series after one blow-up] \thlabel{onebu} 
Suppose that an externally graded abelian unipotent group $\hU$, where $U$ is abelian and $\GG_m$ acts with a single weight on $\operatorname{Lie} U$, acts linearly on a smooth complex projective variety $X$ such that \eqref{snonempty} is satisfied, and let $\pi: Y \to \xmino$ denote the blow-up of $\xmino$ along $C_{\operatorname{max}}(\xmino,\hU)$. 
Then we have: \begin{enumerate}[(i)]
\item the centre $C_{\operatorname{max}}(\xmino,\hU)$ of the blow-up is smooth, as is its intersection $C_{\operatorname{max}}(\zmin,U)$ with $\zmin$;
\item the Poincar\'e series of $Z(Y)_{\operatorname{min}}$ is given by $$P_t(Z(Y)_{\operatorname{min}}) = P_t(\zmin) + \frac{t^2 ( 1 - t^{2 d^0})}{1-t^2} P_t (C_{\operatorname{max}}(\zmin,U))$$ where $d^0 = \operatorname{codim}(C_{\operatorname{max}}(\zmin,U),\zmin)$;
\item the closed subvariety $C_{\operatorname{max}}(Z(Y)_{\operatorname{min}},U)$ of $Z(Y)_{\operatorname{min}}$ is a resolution of singularities of the closed subvariety $C_{\geq d_1-1}(\zmin,U)$ of $\zmin$, where $d_1$ is the second largest dimension of stabiliser group for points in $\xmino$. \label{resofsing}
\end{enumerate}
\end{prop}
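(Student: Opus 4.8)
The plan is to handle the three parts in order, the substance lying in (ii) and (iii); throughout I write $\bar C$ for the closure of $C_{\operatorname{max}}(\xmino,\hU)$ in $X$, so that $\overline{Y}=\Bl_{\bar C}X$ with linearisation perturbed by a small multiple of the exceptional divisor $E=\mathbb{P}(N_{\bar C/X})$. For part (i), the smoothness of $C_{\operatorname{max}}(\xmino,\hU)$ is exactly \thref{smoothnessofbulocus}, so it remains to treat its intersection with $\zmin$. I would first record that $d_{\operatorname{max}}(\xmino,\hU)=d_{\operatorname{max}}(\zmin,U)+1$: this follows from the identity $C_{\operatorname{max}}(\xmino,\hU)=C_{\operatorname{max}}(\xmino,U)\cap U\zmin$ from the proof of \thref{smoothnessofbulocus} together with the grading inequality $\operatorname{dim}\Stab_U(p(x))\geq\operatorname{dim}\Stab_U(x)$ behind \eqref{simplifyingeq}, which also yields $C_{\operatorname{max}}(\zmin,U)=C_{\operatorname{max}}(\xmino,\hU)\cap\zmin$. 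Since the $\GG_m$-fixed locus of $\xmino$ is precisely $\zmin$, this intersection is the $\GG_m$-fixed locus of the smooth variety $C_{\operatorname{max}}(\xmino,\hU)$, hence smooth by the classical result for reductive actions \cite[Prop 1.3]{Iversen1972} (equivalently, apply \thref{proofinspecialcase} after the reduction used to prove \thref{smoothnessofbulocus}).

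For part (ii), the heart of the matter is a description of $Z(\overline{Y})_{\operatorname{min}}$ in terms of $\zmin$ and $C_{\operatorname{max}}(\zmin,U)$. I would extract from the blow-up construction \cite[\S 8]{Berczi2020} — in particular its analysis of how the minimal $\GG_m$-weight space changes under this perturbed blow-up — that the blow-down restricts to a morphism $q\colon Z(\overline{Y})_{\operatorname{min}}\to\zmin$ which is an isomorphism over $\zmin\setminus C_{\operatorname{max}}(\zmin,U)$ and, over $C_{\operatorname{max}}(\zmin,U)$, a Zariski-locally trivial fibration with fibre $\mathbb{P}^{d^0}$ (assembled from the minimal-weight part of the projectivised normal bundle of $\bar C$ together with the proper transform of $\zmin$). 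Granting this, $Z(\overline{Y})_{\operatorname{min}}$ is smooth, being such a modification of the smooth $\zmin$ along the smooth locus $C_{\operatorname{max}}(\zmin,U)$ of part (i); and the Leray--Hirsch theorem applied to the $\mathbb{P}^{d^0}$-bundle $q^{-1}(C_{\operatorname{max}}(\zmin,U))$, combined with the decomposition $P_t(Z(\overline{Y})_{\operatorname{min}})=P_t(\zmin)-P_t(C_{\operatorname{max}}(\zmin,U))+P_t(q^{-1}(C_{\operatorname{max}}(\zmin,U)))$ — valid as everything in sight is smooth projective over $\CC$, e.g.\ by the decomposition theorem for the proper birational $q$ — yields
\[
P_t(Z(\overline{Y})_{\operatorname{min}})=P_t(\zmin)+\bigl(t^2+t^4+\cdots+t^{2d^0}\bigr)P_t(C_{\operatorname{max}}(\zmin,U))=P_t(\zmin)+\frac{t^2(1-t^{2d^0})}{1-t^2}\,P_t(C_{\operatorname{max}}(\zmin,U)).
\]

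For part (iii), I would use the description of $Z(\overline{Y})_{\operatorname{min}}$ above to study $C_{\operatorname{max}}(Z(\overline{Y})_{\operatorname{min}},U)$. It is smooth, since $\overline{Y}$ is smooth (a blow-up of the smooth $X$ along the smooth $\bar C$) so that \thref{smoothnessofbulocus} applies to $\overline{Y}$. By \cite[Prop 8.8]{Berczi2020} the blow-up strictly lowers the maximal $\hU$-stabiliser dimension on $\xmino$, and one verifies it drops precisely to $d_1$ — away from $E$ the $\hU$-action is unchanged, and $E$ contributes only strictly smaller stabilisers — so by \eqref{simplifyingeq} for $\overline{Y}$ the maximal $U$-stabiliser dimension on $Z(\overline{Y})_{\operatorname{min}}$ equals $d_1-1$. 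Since the blow-down is $\hU$-equivariant, stabilisers only grow under $q$, whence $q$ carries $C_{\operatorname{max}}(Z(\overline{Y})_{\operatorname{min}},U)$ into $C_{\geq d_1-1}(\zmin,U)=C_{\geq d_1}(\xmino,\hU)\cap\zmin$; the restricted map is proper and is an isomorphism over $C_{\geq d_1-1}(\zmin,U)\setminus C_{\operatorname{max}}(\zmin,U)$, which is dense because $d_1-1<d_{\operatorname{max}}(\zmin,U)$ forces the proper containment $C_{\operatorname{max}}(\zmin,U)\subsetneq C_{\geq d_1-1}(\zmin,U)$ and over which $q$ and the $U$-action agree with those on $\zmin$. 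Hence it is a proper birational morphism with smooth source, i.e.\ a resolution of singularities of $C_{\geq d_1-1}(\zmin,U)$.

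The main obstacle is the geometric input to part (ii): making the description of $Z(\overline{Y})_{\operatorname{min}}$ precise from \cite[\S 8]{Berczi2020}. The perturbation of the linearisation by a small multiple of $E$ interacts delicately with the $\GG_m$-weights on $E=\mathbb{P}(N_{\bar C/X})$, so determining exactly which connected components of $\overline{Y}^{\GG_m}$ lie at the new minimal weight — and, in particular, why the fibres of $q$ over $C_{\operatorname{max}}(\zmin,U)$ are $\mathbb{P}^{d^0}$ and not $\mathbb{P}^{d^0-1}$ — is where the real work lies. Once that description is in place, the Poincar\'e bookkeeping (this modification is not literally a blow-up, so the cohomological decomposition above needs a brief justification) and the resolution statement of part (iii) follow formally from the smoothness results of this subsection and standard facts about projective bundles and proper birational morphisms.
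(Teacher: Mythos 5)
Your architecture tracks the paper's closely: part (i) via \thref{smoothnessofbulocus} together with smoothness of torus fixed loci in a smooth variety, part (ii) via a geometric description of $Z(\overline{Y})_{\operatorname{min}}$ over $\zmin$ plus standard Poincar\'e bookkeeping, and part (iii) by exhibiting $C_{\operatorname{max}}(Z(\overline{Y})_{\operatorname{min}},U)\to C_{\geq d_1-1}(\zmin,U)$ as proper birational with smooth source (the paper does the same, identifying the map as the blow-up of $C_{\geq d_1-1}(\zmin,U)$ along $C_{\operatorname{max}}(\zmin,U)$ and using \cite[Prop 8.8 (c)]{Berczi2020} for the drop in stabiliser dimension). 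The problem is the step you yourself flag as "where the real work lies": you never establish the description of $Z(\overline{Y})_{\operatorname{min}}$, and the description you posit is not the one the paper uses. The paper closes this step by citing \cite[Prop 8.8]{Berczi2020}: $Z(Y)_{\operatorname{min}}$ is exactly the \emph{proper transform} of $\zmin$ under $\pi$, i.e.\ the blow-up of $\zmin$ along $C_{\operatorname{max}}(\zmin,U)$ — no component of the $\GG_m$-fixed locus inside the exceptional divisor descends to the new minimal weight. In particular the fibre of $q$ over a point of $C_{\operatorname{max}}(\zmin,U)$ is the projectivised normal space $\mathbb{P}(N_{C_{\operatorname{max}}(\zmin,U)/\zmin})_z\cong\mathbb{P}^{d^0-1}$, not the $\mathbb{P}^{d^0}$ you assume. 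Your $\mathbb{P}^{d^0}$ is reverse-engineered from the closed form $\frac{t^2(1-t^{2d^0})}{1-t^2}$ in the statement; but the paper's own displayed computation inserts the blow-up contribution $t^2+t^4+\cdots+t^{2(d^0-1)}$ in its intermediate line and then (inconsistently) sums it to $\frac{t^2(1-t^{2d^0})}{1-t^2}$. You have therefore correctly detected an off-by-one tension in the proposition, but you cannot resolve it by declaring the fibres to be $\mathbb{P}^{d^0}$ without proof — and the result that is actually available says the opposite, so this step of your argument would fail rather than merely remain open. The fix is to import the proper-transform statement of \cite[Prop 8.8]{Berczi2020} and run the standard blow-up formula, accepting that the exponent in the closed form should read $2(d^0-1)$.

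The remainder is sound and matches the paper. Two small points worth tightening in part (iii): smoothness of $C_{\operatorname{max}}(Z(\overline{Y})_{\operatorname{min}},U)$ should be deduced, as in the paper, from \thref{smoothnessofbulocus} applied to the smooth $\overline{Y}$ together with the identity \eqref{simplifyingeq} (which requires knowing that the maximal $\hU$-stabiliser dimension on $\overline{Y}^0_{\operatorname{min}}$ is exactly $d_1$, again \cite[Prop 8.8]{Berczi2020} rather than the informal "the exceptional divisor contributes only strictly smaller stabilisers"); and your density claim for $C_{\geq d_1-1}(\zmin,U)\setminus C_{\operatorname{max}}(\zmin,U)$ needs the (implicit, also present in the paper's proper-transform formulation) assumption that no irreducible component of $C_{\geq d_1-1}(\zmin,U)$ is wholly contained in $C_{\operatorname{max}}(\zmin,U)$.
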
 

\begin{proof} 
As seen in Section \ref{subsec:ssneqsnonred}, the first step of the blow-up construction of \thref{uhatwbups} is to blow $\xmino$ up along the closed subvariety $C_{\operatorname{max}}(\xmino,\hU)$, namely the locus of points in $\xmino$ with maximal dimension unipotent stabiliser group\footnote{We recall from the discussion preceding the statement of \thref{onebu} that we must in fact blow $X$ up along the closure of $C_{\operatorname{max}}(\xmino,\hU)$ to ensure that we obtain a projective variety $\overline{Y}$. Since $\overline{Y}^0_{\operatorname{min}}$ is contained in $Y$ provided the linearisation is chosen appropriately, for simplicity we work with $Y$ only.}. We let $\pi : Y \to \xmino$ denote the blow-up of $\xmino$ along $C_{\operatorname{max}}(\xmino,\hU)$, and let $E$ denote the exceptional divisor. Since $C_{\operatorname{max}}(\xmino,\hU)$ is $\hU$-invariant, there is an induced action of $\hU$ on $Y$. As in the reductive case, we consider the linearisation of this action given by pulling back along the blow-up map the linearisation of the $\hU$-action on $\xmino$ and taking a tensor product with a sufficiently small multiple $\epsilon$ of the exceptional divisor. By \cite[Prop 8.8]{Berczi2020},nthe subvariety $Z(Y)_{\operatorname{min}}$ is the proper transform of $\zmin$ under this blow-up (or equivalently the blow-up of $\zmin$ along $C_{\operatorname{max}}(\xmino,\hU) \cap \zmin = C_{\operatorname{max}}(\zmin,U)$). 

Since $X$ is smooth, the subvariety $\xmino$ is also smooth and therefore by \thref{smoothnessofbulocus} we have that $C_{\operatorname{max}}(\xmino,\hU)$ is smooth. Moreover, by considering the action of $\GG_m$ on the closed subvariety $C_{\operatorname{max}}(\xmino,\hU)$ of $\xmino$, by the results of \cite{Bialynicki-Birula1973} we can conclude that $C_{\operatorname{max}}(\xmino,\hU) \cap \zmin = C_{\operatorname{max}}(\zmin,U)$ is also smooth. It follows that both $Y$ and $Z(Y)_{\operatorname{min}}$ are smooth, since they are the blow-ups of smooth varieties along smooth subvarieties.  

Using the properties of Poincar\'e series under blow-ups for smooth subvarieties, we obtain the desired formula for the equivariant Poincar\'e series of $Z(Y)_{\operatorname{min}} = \operatorname{Bl}_{C_{\operatorname{max}}(\zmin,U)} \zmin$: \begin{align} 
P_t(Z(Y)_{\operatorname{min}}) &  = P_t(\zmin) + P_t(E|_{Z(Y)_{\operatorname{min}}}) - P_t (C_{\operatorname{max}}(\zmin,U)) \nonumber \\
& = P_t(\zmin) + P_t (C_{\operatorname{max}}(\zmin,U)) \left( t^2 + t^4 + \cdots  +  t^{2(\operatorname{codim}( C_{\operatorname{max}}(\zmin,U), \zmin)-1)} \right) \nonumber\\
& = P_t(\zmin) + \frac{t^2 ( 1 - t^{2 d^{0}})}{1-t^2} P_t (C_{\operatorname{max}}(\zmin,U)). \label{formula1}
\end{align} 

To prove \ref{resofsing}, namely that $C_{\operatorname{max}}(Z(Y)_{\operatorname{min}},U)$ is a resolution of singularities of $C_{\geq d_{1}-1}(\zmin,U)$, it suffices to show that $C_{\operatorname{max}}(Z(Y)_{\operatorname{min}},U)$ is a blow-up of $C_{\geq d_{1}-1}(\zmin,U)$. Indeed, since $Z(Y)_{\operatorname{min}}$ is smooth, by \thref{smoothnessofbulocus} it follows that $C_{\operatorname{max}}(Z(Y)_{\operatorname{min}},U)$ is also smooth. We will therefore show that $C_{\operatorname{max}}(Z(Y)_{\operatorname{min}},U)$ is the blow-up of $C_{\geq d_{1}-1}(\zmin,U)$ along $C_{\operatorname{max}}(\zmin,U)$. This is equivalent to showing that $C_{\operatorname{max}}(Z(Y)_{\operatorname{min}},U)$ is the proper transform of $C_{\geq d_{1}-1}(\zmin,U)$ with respect to the blow-up $\pi: Y \to \xmino$. And this result follows from two observations. First, that there is an equality $d_1= d_{\operatorname{max}}(Y^0_{\operatorname{min}},\hU)$. Second, that by \cite[Prop 8.8 (c)]{Berczi2020} there is a strict inequality $\operatorname{dim} \operatorname{Stab}_U(z) <  d_{\operatorname{max}}(\zmin,U)$ for every $z \in Z(Y)_{\operatorname{min}}$.
\end{proof}

\subsubsection{General formula}  \label{combining}
We now give a general formula for the Poincar\'e series of $\hX \gitq \hU$, by applying \thref{onebu} iteratively for each stage of the blow-up construction (see \thref{nonredformula}). \thref{nonredformula} is the precise formulation of parts \ref{part1} and \ref{part2} of \thref{maintheorem}. In addition to \thref{notation1}, we will use the following notation. 

\begin{notation}
Suppose that an externally graded abelian unipotent group $\hU$ acts linearly on a projective variety $X$. We let $$d_{\operatorname{min}}(\xmino,\hU) := d_{(r+1)} < d_{(r)} < \cdots < d_{(1)} < d_{(0)} : = d_{\operatorname{max}}(\xmino,\hU)$$ denote the integers $d \in \mathbb{N}$ arising as the dimensions of stabiliser groups of points in $\xmino$. Moreover, for each $i = 0, \hdots, r+1$ we let \begin{equation} d^{i} := \operatorname{codim}(C_{\geq d_{i}}(\xmino,\hU) \cap \zmin, \zmin) = \operatorname{codim}(C_{\geq d_{i}-1}(\zmin,U),\zmin). \label{didef}
\end{equation}   
\end{notation}

\begin{theorem}[Poincar\'e series of $\hX \gitq \hU$ --  \thref{maintheorem} \ref{part1} \& \ref{part2}] \thlabel{nonredformula} 
Let $\hU = U \rtimes \GG_m$ where the unipotent radical $U$ is abelian and $\GG_m$ acts with strictly positive weights on $\operatorname{Lie} U$ via the adjoint action. Suppose that $\hU$ acts linearly on a smooth complex projective variety $X$ such that \eqref{snonempty} is satisfied.  
Let $$\hX := X_{(r+1)} \to X_{(r)} \to \cdots \to X_{(0)} = X$$ denote the variety resulting from applying the sequence of blow-ups of \thref{uhatwbups} to the action of $\hU$ on $X$ (for simplicity we let $\zmin^{i}$ denote the analogue of $\zmin $ for each $X_{i}$, namely $\zmin^{i} := Z(X_{i})_{\operatorname{min}}$). Then we have: \begin{enumerate}[(i)]
\item at each stage $i$ of the blow-up construction, the centre $C_{\operatorname{max}}((X_{i})^0_{\operatorname{min}},\hU)$ of the blow-up is smooth, as is its intersection $C_{\operatorname{max}}(\zmin^{i},U)$ with $\zmin^{i}$ (see \eqref{simplifyingeq});
\item the Poincar\'e series of $\hX \gitq \hU$ is given by
\begin{equation*} 
P_t(\hX \gitq \hU ) = \frac{1- t^{2d}}{1-t^2} \left(  P_t(\zmin) + \sum_{i=0}^{r}   t^2(1 -  t^{d^{i}})  P_t  \left( C_{\operatorname{max}}(\zmin^{i},U)  \right) \right)
\end{equation*} where $d : = \operatorname{dim} X - \operatorname{dim} U - \operatorname{dim} \zmin$ and $d^{i} : = \operatorname{codim}( C_{\geq d_{i}-1}(\zmin,U),\zmin)$;
\item \label{part3'} the closed subvariety $C_{\operatorname{max}}(\zmin^{i},U) $ of $\zmin^{i}$  is a resolution of singularities of the closed subvariety $C_{\geq d_{i}-1} (\zmin,U)$ of $\zmin$ for each $i =1 , \hdots r+1$.
\end{enumerate} 
\end{theorem}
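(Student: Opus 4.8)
The plan is to deduce the statement from its single-blow-up counterpart \thref{onebu} together with the quotient formula \eqref{uhatformula}, by induction on the number $r+1$ of blow-ups appearing in the tower $\hX = X_{(r+1)} \to \cdots \to X_{(0)} = X$ produced by \thref{uhatwbups}. At stage $i$ one applies \thref{onebu} to $X_{(i)}$, whose $\xmino$-analogue $(X_{(i)})^0_{\operatorname{min}}$ has maximal $\hU$-stabiliser dimension $d_{(i)}$ and second-largest dimension $d_{(i+1)}$.

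First I would establish parts (i) and (iii) together. For (i): $X_{(i)}$ is smooth, being the blow-up of the smooth variety $X_{(i-1)}$ (smooth by the inductive hypothesis) along the closure of $C_{\operatorname{max}}((X_{(i-1)})^0_{\operatorname{min}},\hU)$, which is smooth by \thref{smoothnessofbulocus}; hence $(X_{(i)})^0_{\operatorname{min}}$ is smooth, and applying \thref{smoothnessofbulocus} together with the Bialynicki-Birula argument from the proof of \thref{onebu} shows that $C_{\operatorname{max}}((X_{(i)})^0_{\operatorname{min}},\hU)$ and its intersection $C_{\operatorname{max}}(\zmin^{i},U)$ with $\zmin^{i}$ are both smooth. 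For (iii): the third part of \thref{onebu} applied to $X_{(i)}$ shows that $C_{\operatorname{max}}(\zmin^{i+1},U)$ is the proper transform of $C_{\geq d_{(i+1)}-1}(\zmin^{i},U)$ under the blow-up $\zmin^{i+1}\to\zmin^{i}$, hence a blow-up of it along a smooth centre. It then remains to identify $C_{\geq d_{(i+1)}-1}(\zmin^{i},U)$ with the proper transform of $C_{\geq d_{(i+1)}-1}(\zmin,U)$ under the composite $\zmin^{i}\to\zmin$; granting this, $C_{\operatorname{max}}(\zmin^{i+1},U)$ is obtained from $C_{\geq d_{(i+1)}-1}(\zmin,U)$ by a finite sequence of blow-ups along smooth centres, and being itself smooth by (i), it is a resolution of singularities of $C_{\geq d_{(i+1)}-1}(\zmin,U)=C_{\geq d_{i+1}-1}(\zmin,U)$.

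The identification just invoked is the crux and I expect it to be the main obstacle. Each of the earlier blow-ups $\zmin^{j}\to\zmin^{j-1}$ ($j\le i$) is along the top stratum $C_{\operatorname{max}}(\zmin^{j-1},U)=C_{\geq d_{(j-1)}-1}(\zmin^{j-1},U)$, whose threshold $d_{(j-1)}-1$ is strictly larger than $d_{(i+1)}-1$, so the centre lies inside $C_{\geq d_{(i+1)}-1}$; what one must show is that blowing up this top stratum carries the proper transform of $C_{\geq d_{(i+1)}-1}$ exactly onto $C_{\geq d_{(i+1)}-1}$ of the next stage -- i.e. the blow-up neither discards points of stabiliser dimension $\ge d_{(i+1)}-1$ nor raises stabiliser dimension above $d_{(i+1)}-1$ along the exceptional divisor outside the proper transform. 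The key input is \cite[Prop 8.8]{Berczi2020}, in particular the strict drop $\dim\operatorname{Stab}_U(z)<d_{\operatorname{max}}(\zmin^{j-1},U)$ for $z$ in the new $Z_{\operatorname{min}}$, which controls the exceptional locus; this is then combined with a local analysis of stabiliser dimensions along the blow-up, exactly as in the proof of the third part of \thref{onebu}. Cleanest is to phrase the induction as a single claim: for every threshold $k$ below the current maximum, the proper transform of $C_{\geq k}(\zmin,U)$ through the first $i$ blow-ups coincides with $C_{\geq k}(\zmin^{i},U)$.

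Finally I would prove part (ii). Since \eqref{ss=s(U)} holds for the $\hU$-action on $\hX$ by construction and $\hX$ is smooth by (i), \thref{uhatthm} and \thref{Uabelian} apply, and \eqref{uhatformula} gives $P_t(\hX\gitq\hU)=\tfrac{1-t^{2d}}{1-t^{2}}\,P_t(\zmin^{r+1})$, where $d=\dim X-\dim U-\dim\zmin$ is unchanged along the tower because each blow-up preserves dimension and each $\zmin^{i}$ has dimension $\dim\zmin$. It then suffices to telescope the second part of \thref{onebu}, $P_t(\zmin^{i+1})=P_t(\zmin^{i})+\tfrac{t^{2}(1-t^{2\delta_{i}})}{1-t^{2}}P_t(C_{\operatorname{max}}(\zmin^{i},U))$ with $\delta_{i}=\operatorname{codim}(C_{\operatorname{max}}(\zmin^{i},U),\zmin^{i})$, and to check $\delta_{i}=d^{i}=\operatorname{codim}(C_{\geq d_{i}-1}(\zmin,U),\zmin)$ -- which is immediate from part (iii), since $C_{\operatorname{max}}(\zmin^{i},U)$ is birational to $C_{\geq d_{i}-1}(\zmin,U)$ and $\zmin^{i}$ is birational to $\zmin$, so their codimensions agree. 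Substituting into the formula for $P_t(\hX\gitq\hU)$ and collecting the geometric-series terms then yields the stated closed form.
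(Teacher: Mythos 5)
Your proposal is correct and follows essentially the same route as the paper: iterate \thref{onebu} along the tower, identify $C_{\geq d_{i}-1}(\zmin^{i-1},U)$ with the proper transform of $C_{\geq d_{i}-1}(\zmin,U)$ using \cite[Prop 8.8]{Berczi2020}, apply \eqref{uhatformula} to $\hX$, and telescope. The proper-transform identification you single out as the crux is indeed the delicate step, and the paper likewise asserts it with only a brief appeal to the strict drop in stabiliser dimension rather than the full local analysis you outline.
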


As depicted in Figure \ref{fig:ssneqsuhat}, given the linear action of an externally graded unipotent group $\hU$ on a smooth projective variety $X$ satisfying the conditions needed so that \thref{uhatwbups} applies, the Poincar\'e series of the blown-up quotient $\hX \gitq \hU$ can be computed from that of $\zmin$: the aim is to determine from $P_t(\zmin)$ the Poincar\'e series of $\widehat{Z}_{\operatorname{min}}$ (which determines the Poincar\'e series of $\hX \gitq \hU$) by tracking through each step of the blow-up construction.

\begin{proof}
The first step of the blow-up construction is to blow $\xmino$ up along $C_{\operatorname{max}}(\xmino,\hU)$, giving $\pi_{1}: X_{1} \to \xmino$, and by \thref{onebu} we have a formula for the Poincar\'e series $P_t(\zmin^{1})$. 
Proceeding iteratively for each $i = 1,\hdots, r+1$, the variety $X_{i+1}$ is obtained by blowing $X_{i}$ along the smooth subvariety $C_{\operatorname{max}}((X_{i})^0_{\operatorname{min}},U)$. Note that the maximal dimension of unipotent stabiliser groups for points in $(X_{i})^0_{\operatorname{min}}$ is given by $d_{i}$. 

Applying the formula of \ref{resofsing} to $X_{i+1}$ instead of $X$ yields  \begin{equation}
P_t(\zmin^{i+1}) = P_t(\zmin^{i}) + \frac{t^2 ( 1 - t^{2 \operatorname{codim}(C_{\operatorname{max}}(\zmin^{i},U),\zmin^{i})}}{1-t^2} P_t \left( C_{\operatorname{max}}(\zmin^{i},U) \right). \label{intermediateform}
\end{equation}

By \thref{onebu} \ref{resofsing} applied at the $i$-th stage, we have that $C_{\operatorname{max}}(\zmin^{i},U)$ is a resolution of singularities of the closed subvariety $C_{\geq d_{i}-1}(\zmin^{i-1},U)$ of $\zmin^{i-1}$. Now $C_{\geq d_{i}-1}(\zmin^{i-1},U)$ can itself be identifed as the proper transform of $C_{\geq d_{i}-1}(\zmin,U)$ along the sequence of blow-ups $X_{i} \to X_{i-1} \to \cdots X_{(0)} = X$. Thus $C_{\geq d_{i}-1}(\zmin^{i-1},U)$ can be viewed as a resolution of singularities of $C_{\geq d_{i}-1}(\zmin,U)$ and moreover we have an equality $$\operatorname{codim}(C_{\operatorname{max}}(\zmin^{i},U),\zmin^{i-1}) = d^{i} : = \operatorname{codim}(C_{\geq d_{i}-1}(\zmin,U),\zmin).$$

The variety $X_{r+1}$ obtained at the $r+1$-th stage satisfies the property that points in $(X_{r+1})^0_{\operatorname{min}}$ have unipotent stabiliser groups of constant dimension equal to $d_{\operatorname{max}}((X_{r+1})^0_{\operatorname{min}},\hU) = d_{r+1}$, and so $$C_{\operatorname{max}}((X_{r+1})^0_{\operatorname{min}},\hU) = (X_{r+1})^0_{\operatorname{min}}.$$ Thus $X_{r+1}$ is the desired smooth variety $\hX$ admitting a linear $\hU$-action which satisfies the condition \eqref{ss=s(U)}\footnote{To be exact, the variety $\hX$ is in fact $\overline{X_{r+1}}$, see the discussion preceding the statement of \thref{onebu}.}.

We can therefore apply the formula \eqref{uhatformula} introduced in Section \ref{subsec:cohomuhat} to the action of $\hU$ on $\hX$, valid when \eqref{ss=s(U)} is satisfied:  \begin{equation*} P_t(\hX \gitq \hU) = P_t(\widehat{Z}_{\operatorname{min}}) \frac{1 - t^{2 \widehat{d}} }{1-t^2} 
\end{equation*}
where $\widehat{d} = \operatorname{codim}(U \widehat{Z}_{\operatorname{min}}, \hX) = \operatorname{dim} \hX - \operatorname{dim} U - \operatorname{dim} \widehat{Z}_{\operatorname{min}}$.  Since $\widehat{Z}_{\operatorname{min}}$ is the proper transform of $\zmin$ for the composition of blow-ups $\hX \to X$, we have that $\widehat{d}  = \operatorname{dim} X - \operatorname{dim} U - \operatorname{dim} \zmin$ so that $\widehat{d} = d$. Thus $$P_t(\hX \gitq \hU) = P_t(\widehat{Z}_{\operatorname{min}}) \frac{1 - t^{2 d} }{1-t^2}.$$ Finally, the Poincar\'e series $P_t(\widehat{Z}_{\operatorname{min}})$ can be computed by using \eqref{intermediateform} at each stage of the blow-up construction, thus giving the desired formula for $P_t(\hX \gitq \hU)$: \begin{equation*}P_t(\hX \gitq \hU ) = \frac{1- t^{2d}}{1-t^2} \left(  P_t(\zmin) + \sum_{i=0}^{r}t^2 ( 1 - t^{2 d^{i}})  P_t \left( C_{\operatorname{max}}(\zmin^{i},U \right) \right). 
\end{equation*} 
\end{proof}

\subsection{The general non-reductive case} \label{subsec:gennonred}

In this section we show how the result of Section \ref{subsec:simplesnonred} above can be combined with the results of Section \ref{subsec:cohomH}  to obtain a formula for the Poincar\'e series $\hX \gitq H$ under the assumption that semistability coincides with stability for the action of $R_{\l} : = R/ \lambda(\GG_m)$ on $\zmin$. That is, we prove the following \thref{mostgeneralformula}, which is the precise formulation of part \ref{part3} of \thref{maintheorem}. 

\begin{theorem}[Poincar\'e series of $\hX \gitq H$ -- \thref{maintheorem} \ref{part3}] \thlabel{mostgeneralformula}
Let $H =  U \rtimes R$ be an internally graded linear algebraic group where $U$ is abelian and the grading one-parameter subgroup $\lambda: \GG_m \to Z(R)$ acts with a single weight on $\operatorname{Lie} U$. Suppose that $H$ acts linearly on a smooth complex projective variety $X$ such that \eqref{snonempty} is satisfied and such that the semistable locus coincides with the stable locus (and is non-empty) for the induced action of $R_{\l} : = R/\lambda(\GG_m)$ on $\zmin$. 
Let $\hX$ denote the result of applying the blow-up construction of \thref{uhatwbups} to the action of $\hU: = U \rtimes \lambda(\GG_m) \subseteq H$ on $X$. Then the quotient $\hX \gitq H : = (\hX \gitq \hU ) \gitq R_{\l}$ has at worse finite quotient singularities and, using the notation from \thref{nonredformula}, its Poincar\'e series is given by: \begin{equation} P_t(\hX \gitq H )   = \frac{1- t^{2d}}{1-t^2}  \left(   P_t \left( \zmin \gitq R_{\l} \right) + \sum_{i=0}^{r}t^2 ( 1 - t^{2 d^{i}})  P_t \left( C_{\operatorname{max}}(\zmin^{i},U ) \gitq R_{\l}  \right) \right), \label{lastformula}
\end{equation}
 where $d : = \operatorname{dim} X - \operatorname{dim} U - \operatorname{dim} \zmin$ and $ d^{i} : = \operatorname{codim} ( C_{\operatorname{max}}(\zmin^{i},U ) \gitq R_{\l} ,\zmin \gitq R_{\l}).$ 
\end{theorem}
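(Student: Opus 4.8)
The plan is to reduce \thref{mostgeneralformula} to \thref{nonredformula} together with the formula \eqref{Hformula} for the Poincar\'e series of a quotient by $H$ in the case where `semistability coincides with stability'. The first observations are structural. The centres of the blow-ups produced by \thref{uhatwbups} are $H$-invariant, not merely $\hU$-invariant, so the resulting variety $\hX$ carries an induced linear $H$-action extending the $\hU$-action; and since $\lambda(\GG_m)$ is central in $R$, the minimal weight subvariety $\widehat{Z}_{\operatorname{min}} := Z(\hX)_{\operatorname{min}}$ is $R$-invariant and hence inherits a linear action of $R_{\l} := R/\lambda(\GG_m)$. By \thref{nonredformula}, $\hX$ is smooth, $\widehat{Z}_{\operatorname{min}}$ is the iterated blow-up of $\zmin$ along the smooth (and $R_{\l}$-invariant) centres $C_{\operatorname{max}}(\zmin^{i},U)$, and $\hX \gitq \hU$ has at worst finite quotient singularities.

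Next I would verify that the hypotheses needed to apply the results of Section \ref{subsec:cohomH} to the $H$-action on $\hX$ hold. The condition \eqref{ss=s(U)} for $\hU$ on $\hX$ holds by the very construction of $\hX$, and it remains to check that `semistability coincides with stability' for the induced $R_{\l}$-action on $\hX \gitq \hU$. This I would deduce from the hypothesis $\zmin^{ss,R_{\l}} = \zmin^{s,R_{\l}}$: since every blow-up centre is a smooth $R_{\l}$-invariant subvariety, the absence of strictly $R_{\l}$-semistable points is preserved at each stage of the tower (stabilisers can only pass to subgroups along the exceptional locus, so finiteness of stabilisers persists), whence $\widehat{Z}_{\operatorname{min}}^{ss,R_{\l}} = \widehat{Z}_{\operatorname{min}}^{s,R_{\l}}$, and the description of $\hX \gitq H$ in terms of $\widehat{Z}_{\operatorname{min}}^{ss,R_{\l}}$ (cf.\ \cite[Lem 5.6]{Berczi2019}) then transfers this to $\hX \gitq \hU$. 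With these in place, \eqref{Hformula} applied with $\hX$ in place of $X$ gives
\begin{equation*}
P_t(\hX \gitq H) = P_t(\widehat{Z}_{\operatorname{min}} \gitq R_{\l}) \cdot \frac{1 - t^{2\widehat{d}}}{1-t^{2}},
\end{equation*}
and, exactly as in the proof of \thref{nonredformula}, $\widehat{d} = \operatorname{dim} \hX - \operatorname{dim} U - \operatorname{dim} \widehat{Z}_{\operatorname{min}} = \operatorname{dim} X - \operatorname{dim} U - \operatorname{dim} \zmin = d$, because $\widehat{Z}_{\operatorname{min}}$ is the proper transform of $\zmin$.

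It then remains to compute $P_t(\widehat{Z}_{\operatorname{min}} \gitq R_{\l})$, and here I would use that forming the $R_{\l}$-GIT quotient commutes with blowing up along a smooth $R_{\l}$-invariant centre, after the standard perturbation of the linearisation by a small multiple of the exceptional divisor, just as in the partial desingularisation construction of \cite{Kirwan1985}. This identifies $\widehat{Z}_{\operatorname{min}} \gitq R_{\l}$ with the iterated blow-up of $\zmin \gitq R_{\l}$ along the subvarieties $C_{\operatorname{max}}(\zmin^{i},U) \gitq R_{\l}$, each of which has codimension $d^{i}$ (as the $R_{\l}$-action has generically finite stabilisers both upstairs and downstairs, so codimensions are unchanged by passing to quotients) and is, by part \ref{part3'} of \thref{nonredformula}, a partial resolution of singularities of $C_{\geq d_{i}-1}(\zmin,U) \gitq R_{\l}$. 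Applying the Poincar\'e-series formula for a blow-up along a smooth centre iteratively --- the same bookkeeping as in \thref{onebu} and \thref{nonredformula}, but with $\gitq R_{\l}$ inserted throughout --- gives
\begin{equation*}
P_t(\widehat{Z}_{\operatorname{min}} \gitq R_{\l}) = P_t(\zmin \gitq R_{\l}) + \sum_{i=0}^{r} \frac{t^{2}(1 - t^{2 d^{i}})}{1-t^{2}}\, P_t\bigl(C_{\operatorname{max}}(\zmin^{i},U) \gitq R_{\l}\bigr),
\end{equation*}
and multiplying by $(1-t^{2d})/(1-t^{2})$ yields \eqref{lastformula}. Finally, the finite-quotient-singularity assertion follows since $\hX$ is smooth, so $\hX \gitq \hU$ has at worst finite quotient singularities by \thref{Uabelian}, and the GIT quotient of a variety with at worst finite quotient singularities by the reductive group $R_{\l}$ acting with finite stabilisers --- which is the case because $R_{\l}$-semistability coincides with stability --- again has at worst finite quotient singularities.

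The main obstacle I expect lies in the second paragraph: proving that the condition `semistability coincides with stability' for $R_{\l}$ genuinely propagates up the entire non-reductive blow-up tower and passes from $\widehat{Z}_{\operatorname{min}}$ to $\hX \gitq \hU$, and that the $R_{\l}$-quotient commutes with the (non-reductive) blow-up construction, so that $\widehat{Z}_{\operatorname{min}} \gitq R_{\l}$ really is the iterated blow-up of $\zmin \gitq R_{\l}$ along the quotients of the centres. Once this compatibility is secured, the remaining steps are the same Poincar\'e-series computations already carried out for the $\hU$-quotient in \thref{nonredformula}.
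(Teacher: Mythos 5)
Your proposal is correct and follows essentially the same route as the paper: check that $\hX^0_{\operatorname{min}}$ is smooth and that $\widehat{Z}_{\operatorname{min}}^{ss,R_{\l}} = \widehat{Z}_{\operatorname{min}}^{s,R_{\l}} \neq \emptyset$, apply \eqref{Hformula} to reduce to $P_t(\widehat{Z}_{\operatorname{min}} \gitq R_{\l})$ with $\widehat{d}=d$, and compute the latter by identifying $\widehat{Z}_{\operatorname{min}} \gitq R_{\l}$ with an iterated blow-up of $\zmin \gitq R_{\l}$ along the quotients $C_{\operatorname{max}}(\zmin^{i},U) \gitq R_{\l}$ of the centres. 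The one step you rightly flag as the main obstacle --- propagating $\zmin^{ss,R_{\l}} = \zmin^{s,R_{\l}}$ up the tower --- is settled in the paper by invoking Reichstein's result that stability is preserved under blowing up (so the stable locus of $\widehat{Z}_{\operatorname{min}}$ is the proper transform of that of $\zmin$); your heuristic about stabilisers only passing to subgroups should be replaced by that reference, since coincidence of semistability with stability is not purely a condition on finiteness of stabilisers.
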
  

 \thref{mostgeneralformula} shows that the Poincar\'e series of $\hX \gitq H$ can be computed in terms of information about $\zmin \gitq R_{\l}$, namely its Poincar\'e series and those of a finite number of blow-ups of $\zmin \gitq R_{\l}$. Figure \ref{fig:ssneqsgeneralsimp} illustrates this strategy for computing $P_t( \hX \gitq H)$.

\begin{figure}[h]
\centering
 \begin{tikzpicture}[scale=0.5,x=(57:7.5cm),y=(0:4.2cm), z=(90:2.3cm),
    cross line/.style={preaction={draw=white,-,line width=3pt}},
    equal/.style={double distance=1pt},roundnode/.style={circle, draw=green!50, fill=gray!5, line width = 0.7mm},
    calcarrow/.style={line width=1mm,scale=3,>=stealth,draw=green!50}, dot/.style = {ellipse, draw=green!50,line width = 0.7mm, minimum width=45pt,minimum height=20pt,
              inner sep=0pt, outer sep=0pt},
dot/.default = 30pt  
, dot3/.style = {ellipse, draw=green!50,line width = 0.7mm, minimum width=30pt,minimum height=20pt,
              inner sep=0pt, outer sep=0pt},
dot3/.default = 30pt , dot/.style = {ellipse, draw=green!50,line width = 0.7mm, minimum width=45pt,minimum height=20pt,
              inner sep=0pt, outer sep=0pt},
dot/.default = 30pt 
                    ]

\node (c2) at (-1,1.4,2.5) { };
\node (c3) at (-1,-1.35,2.5) { };
\node (c7) at (-1,1.5,-0.45) { } ;
\node (c8) at (-1,-1.35,-0.45) { } ;
\node (c9) at (0,-1.35,2.7) {};
\node (c11) at (0,1.5,2.7) {};
\node (c10) at (0,-1.35,-0.45) {};
\node (c12) at (0,1.5,-0.45) { };
      
\node (1) at (-1,-1,2) {\( X \)};
\node (2') at (-1,-1,1) {\(\xmino \)};
\node  (3') at (-1,0,1) {\( \zmin \)};
\node (30) at (-1,0,0) {\( \zmin^{ss, R_{\l}} \)};
\node (30') at (-1,1,0) {\( \zmin^{s,R_{\l}}/(R_{\l}) \)};
\node (8) at (0,-1,2) {\( \hX \)};

\node[draw,line width=0.5mm,label={[black,shift={(0,-0.1,0)},font=\bfseries]{START}}] (8') at (-1,1,1) {\( \zmin \gitq R_{\l} \)};

\node (9) at (0,0,2) {\( \hX \gitq \hU \)};
\node[draw,line width=0.5mm,label={[black,shift={(0,00,-0.02)},font=\bfseries]{END}}] (10) at (0,1,2) {\(  \hX \gitq H \)};

\node (11) at (0,-1,1) {\( \hX^0_{\operatorname{min}} \)};

\node (12) at (0,0,1) {\( \widehat{Z}_{\operatorname{min}} \)};
\node (13) at (0,0,0) {\( \widehat{Z}_{\operatorname{min}}^{ss,R_{\l}} \)};
\node (14) at (0,1,0) {\( \widehat{Z}_{\operatorname{min}}^{s,R_{\l}} / R_{\l} \)};
\node[draw,line width=0.5mm] (13') at (0,1,1) {\( \widehat{Z}_{\operatorname{min}} \gitq R_{\l} \)};

\filldraw  [fill = red, draw=black,loosely dotted, opacity=0.2] (c3) rectangle (c7);
\filldraw  [fill = yellow, draw=black,loosely dotted, opacity=0.2] (c10) rectangle (c11);

\draw  [black,loosely dotted] (c10) rectangle (c11);
\draw  [black,loosely dotted] (c3) rectangle (c7);

\draw [black,loosely dotted] (c10) -- (c8);
\draw [black,loosely dotted] (c9) -- (c3);
\draw [black,loosely dotted] (c11) -- (c2);
\draw [black,loosely dotted] (c12) -- (c7);

\draw [dashed,->] (3') -- (8');
\draw [->] (30) -- (30');
\draw [equal] (30') -- (8');
\draw[dashed,->] (12) -- (13');
\draw[->] (13) -- (14);
\draw[equal] (14) -- (13');
\draw [right hook ->] (30) -- (3');

\draw [loosely dotted, black] (c11) -- (c12);
\draw [-> ] (2') -- (3');
\draw [right hook ->] (2') -- (1);
\draw [right hook ->](11) -- (8);

\draw [dashed,->] (8) -- (9);
\draw [dashed,->] (9) -- (10);
\draw [->] (9) -- (12);
\draw [->] (10) -- (13');

\draw [->] (8) -- (1) node[midway,left] {$\widehat{\pi}$} ;

\draw[->](11) -- (12);

\draw [right hook-> ](13) -- (12);

  \draw [calcarrow,->,draw=black!75] ([xshift=2.5mm]8'.north)  to [bend left=20] (13');
   \draw [calcarrow,->,draw=black!75] ([xshift=5mm]13'.north) to [bend right=35] (10.east);
 
		\end{tikzpicture} \caption{Strategy for computing the Poincar\'e series of $\hX \gitq H$ in terms of the Poincar\'e series of $\zmin \gitq R_{\l}$ when $\zmin^{ss,R_{\l}} = \zmin^{s,R_{\l}} \neq \emptyset$ and \eqref{snonempty} is satisfied. Here $H = U \rtimes R$ is an internally graded linear algebraic group with abelian unipotent radical $U$ and such that the grading one-parameter subgroup $\lambda: \GG_m \to Z(R)$ acts with a single weight on $\operatorname{Lie} U$ via the adjoint action, and $H$ is assumed to act linearly on a smooth complex projective variety $X$. The sequence of bold arrows indicate that the Poincar\'e series of $\hX \gitq H$ can be computed from the Poincar\'e series of $\zmin \gitq R_{\l}$, using the fact that $\widehat{Z}_{\operatorname{min}} \gitq R_{\l}$ can be obtained from a sequence of blow-ups of $\zmin \gitq R_{\l}$, and that the Poincar\'e series of $\widehat{Z}_{\operatorname{min}} \gitq R_{\l}$ determines that of $\hX \gitq H$ by \eqref{Hformula}. 
		}
		
		 \label{fig:ssneqsgeneralsimp}

	 	\end{figure}

	 \begin{rk}[Methods for computing $P_t(\zmin \gitq R_{\l})$] \thlabel{computingquotient}
	As noted above, by \thref{mostgeneralformula} the computation of $P_t(\hX \gitq H)$ reduces to that of $P_t(\zmin \gitq R_{\l})$, which can be calculated from the $H$-equivariant Poincar\'e series of $X$ by using the results from Section \ref{subsec:cohomred}. Indeed the results from classical GIT summarised in Section \ref{subsec:cohomred} provide an inductive formula for computing the Poincar\'e series of a GIT quotient in terms of the equivariant Poincar\'e series of the parameter space -- see \eqref{redformula}. Applying the result in this case yields an inductive formula for $P_t(\zmin \gitq R_{\l})$ in terms of $P_t^{R_{\l}}(\zmin)$, which can itself be computed from the $H$-equivariant Poincar\'e series of $X$ using the retraction map $p:  \xmino \to \zmin$ and the fact that $\xmino$ is the open Bialynicki-Birula stratum for the action of $\lambda(\GG_m)$ on $X$. 
	
	However, when applying \thref{mostgeneralformula} to moduli problems it is not always necessary to use the strategy described above to compute $P_t(\zmin \gitq R_{\l})$. Indeed, the variety $\zmin \gitq R_{\l}$ typically corresponds to a moduli space for a subset of the objects to be classified, whose cohomology may already be known. This is the case for moduli spaces of unstable vector or Higgs bundles on a smooth projective curve (and more generally of sheaves or Higgs sheaves on a smooth projective variety), which can be constructed using Non-Reductive GIT. In the Non-Reductive GIT set-up for the construction of these moduli spaces, $\zmin \gitq R_{\l}$ is a moduli space for those unstable bundles which are isomorphic to their Harder-Narasimhan graded  (see \cite{Jackson2018,Hamilton2019a}). In \thref{applicationtobundles} below we discuss the application of \thref{mostgeneralformula} to this example. 
	 \end{rk}

\begin{rk}[Application of \thref{mostgeneralformula} to unstable Higgs or vector bundles] \thlabel{applicationtobundles} 
\thref{mostgeneralformula} can be used to compute the Poincar\'e series of moduli space which can be constructed as non-reductive GIT quotients of smooth varieties by non-reductive group actions satisfying the conditions of \thref{mostgeneralformula}. Examples of such moduli spaces are moduli spaces for unstable vector or Higgs bundles on a smooth projective curve, and more generally (Higgs) sheaves on a smooth projective variety, with a Harder-Narasimhan type of length two (this includes the case of rank two unstable vector or Higgs bundles). Indeed such moduli spaces can be constructed using Non-Reductive GIT (see \cite{Jackson2018,Hamilton2019a}), a construction which requires performing the non-reductive blow-ups as the condition \eqref{ss=s(U)} is not satisfied. 

In these examples the quotient $\zmin \gitq R_{\l}$ can be interpreted as a product of moduli spaces of semistable Higgs/vector bundles  parametrising Higgs/vector bundles of a fixed Harder-Narasimhan type which are isomorphic to their Harder-Narasimhan graded, as noted already in \thref{computingquotient} above. Moreover, the centres of the iterated blow-ups of $\zmin \gitq R_{\l}$ are proper transforms of closed subvarieties of $\zmin \gitq R_{\l}$ which can be constructed from certain Brill-Noether loci associated to the base curve. We note that by part \ref{part3'} of \thref{nonredformula}, these proper transforms have at worst finite quotient singularities (since they are geometric quotient with finite stabiliser groups of smooth varieties), and thus represent partial desingularisations of the corresponding Brill-Noether loci. In the case of rank two bundles, the Brill-Noether loci appearing in the setting of \thref{mostgeneralformula} are of rank one; they are of higher rank when considering higher rank bundles. 
\end{rk}

We now prove \thref{mostgeneralformula}. 

\begin{proof}[Proof of \thref{mostgeneralformula}] 
Performing the sequence of non-reductive blow-ups of \thref{uhatwbups} results in a variety $\hX$ with a linear action of $H$ such that \eqref{ss=s(U)} is satisfied, so that $\hX \gitq \hU$ is well-defined. The non-reductive GIT quotient by $H$ is given by $\hX \gitq H : =  (\hX \gitq \hU) \gitq R_{\l}$. Our aim is to apply \eqref{Hformula} from Section \ref{subsec:cohomH}, which provides a formula for the Poincar\'e series of $\hX \gitq H$ in terms of $\widehat{Z}_{\operatorname{min}} \gitq R_{\l}$. To do so we must know that two conditions are satisfied: firstly that $\hX^0_{\operatorname{min}}$ is smooth\footnote{In Section \ref{subsec:cohomuhat} we assumed the stronger condition that $X$ is smooth. However, the results stated only require that $\xmino$ is smooth since the equivariantly perfect stratification used to compute the Poincar\'e series of the quotient is a stratification only of $\xmino$ rather than of all of $X$.} and secondly that $\widehat{Z}_{\operatorname{min}}^{ss,R_{\l}} = \widehat{Z}_{\operatorname{min}}^{s,R_{\l}} \neq \emptyset$. 

The first condition is satisfied by \thref{smoothnessofbulocus}, since we have assumed that $X$ (and therefore $\xmino$) is smooth. Indeed $\widehat{X}^0_{\operatorname{min}}$ is an open subset of a variety $\hX$ obtained after a finite number of blow-ups of the smooth variety $\xmino$, each with a smooth centre by \thref{smoothnessofbulocus}. 

We now prove that the second condition is also satisfied, using the assumption that $\zmin^{ss,R_{\l}} = \zmin^{s,R_{\l}} \neq \emptyset$. By construction $\widehat{Z}_{\operatorname{min}}$ coincides with the proper transform of $\zmin$ in $\hX$, and thus can be viewed as a blow-up of $\zmin$. This perspective enables us to apply the arguments of \cite{Reichstein1989} which studies the behaviour of stability under blow-ups. In particular, it is shown there that stability is preserved under blowing up, from which it follows that the stable locus in $\widehat{Z}_{\operatorname{min}}$ coincides with the proper transform of the stable locus in $Z_{\operatorname{min}}$. Since $\zmin^{ss,R_{\l}} = \zmin^{s,R_{\l}} \neq \emptyset$ by assumption, it follows that $\widehat{Z}_{\operatorname{min}}^{ss,R_{\l}} = \widehat{Z}_{\operatorname{min}}^{s,R_{\l}} \neq \emptyset$. 

Thus by \eqref{Hformula} of Section \ref{subsec:cohomH} we have that $$P_t(\hX \gitq H)  = P_t(\widehat{Z}_{\operatorname{min}} \gitq R_{\l}) \frac{1 - t^{2d}}{1-t^2}.$$ It remains only to express $P_t(\widehat{Z}_{\operatorname{min}} \gitq R_{\l})$ in terms of $\zmin \gitq R_{\l}$ and of its blow-ups, which we can do by using the commutative diagram 
\begin{center}
 \begin{tikzcd} 
\widehat{Z}_{\operatorname{min}}   \arrow[d] & \widehat{Z}_{\operatorname{min}}^{s,R_{\l}} \arrow[l,hook] \arrow[d]  \arrow[r] & \widehat{Z}_{\operatorname{min}} \gitq R_{\l} \arrow[d]  \\
 \zmin   & \zmin^{s,R_{\l}} \arrow[l,hook] \arrow[r] & \zmin \gitq R_{\l} 
\end{tikzcd} 
\end{center}
where the vertical maps are sequences of blow-down maps. The commutativity of the square on the left follows from the arguments of \cite{Reichstein1989} evoked in the paragraph above and the assumption that $\zmin^{ss,R_{\l}} = \zmin^{s,R_{\l}}$. The commutativity of the square on the right follows from the fact that $\widehat{Z}_{\operatorname{min}} \gitq R_{\l}$, which is a geometric quotient given that $\widehat{Z}^{ss,R_{\l}} = \widehat{Z}_{\operatorname{min}}^{s,R_{\l}}$, can equivalently be viewed as a sequence of blow-ups of $\zmin \gitq R_{\l}$ along the closed subvarieties given by the images of those in $\zmin \cap \zmin^{s,R_{\l}}$ along the quotient map $\zmin^{s,R_{\l}} \to \zmin \gitq R_{\l}$. That is, at each stage $i$ the corresponding quotient $\zmin^{i} \gitq R_{\l}$ is obtained by blowing up the quotient $\zmin^{i+1} \gitq R_{\l}$ along the closed subvariety $C_{\operatorname{max}}(\zmin^{i},U) \gitq R_{\l}$, and so its Poincar\'e series is given by $$ P_t( \zmin^{i+1} \gitq R_{\l}) = P_t(\zmin^{i} \gitq R_{\l}) + \frac{t^2 ( 1- t^{2d^{i-1}})}{1-t^2}  P_t(C_{\operatorname{max}}(\zmin^{i},U) \gitq R_{\l})$$ where $d^{i} : = \operatorname{codim} ( C_{\operatorname{max}}(\zmin^{i},U) \gitq R_{\l}, \zmin^{(i)} \gitq R_{\l}.$ Applying this formula iteratively for each stage in the blow-up construction, we obtain the desired formula \eqref{lastformula}. 
\end{proof}

We conclude this paper by providing a strategy for generalising \thref{mostgeneralformula} to the case where semistability does not coincide with stability for the action of $R_{\l}$ on $\zmin$. This situation occurs for example when applying Non-Reductive GIT to the construction of moduli spaces for unstable vector or Higgs bundles whose Harder-Narasimhan type is not coprime. 

\begin{rk}[Extending \thref{mostgeneralformula} to the case where semistability does not coincide with stability for the action of $R_{\l}$ on $\zmin$]
To establish the formula \eqref{lastformula} for computing the Poincar\'e series of $\hX \gitq H$ we have assumed that semistability coincides with stability for the action of $R_{\l}$ on $\zmin$, and that the stable locus is non-empty. This ensures that $\hX \gitq H$ has at worst finite quotient singularities if $X$ is smooth. Without this condition, the quotient $\hX \gitq H$ may have much worse singularities even if $X$ is smooth. In this case, provided we assume that $\zmin^{s, R_{\l}}$ is non-empty, we  can still compute its intersection Poincar\'e series\footnote{We consider intersection cohomology instead in this case as it is better suited to the study of singular projective varieties than ordinary cohomology.},  by using results from \cite{Kirwan1986a} for computing the intersection Poincar\'e series of classical GIT quotients when semistability does not coincide with stability. That is, we can follow-up the non-reductive blow-ups with the partial desingularisation construction of classical GIT for the action of $R$ on $\hX$, to obtain a variety $\tX$ such that semistability coincides with stability for the induced action of $H$ and such that $\widetilde{Z}_{\operatorname{min}}^{ss,R_{\l}} = \widetilde{Z}^{s,R_{\l}}_{\operatorname{min}}$. Moreover, if $X$ is smooth then $\tX$ will also be smooth, so that \eqref{Hformula} can be used to compute $P_t(\tX \gitq H)$ in terms of $\widetilde{Z}_{\operatorname{min}} \gitq R_{\l}$. The  quotient $\widetilde{Z}_{\operatorname{min}} \gitq R_{\l}$ can equivalently be viewed as the quotient obtained after applying the partial desingularisation construction to the action of $R_{\l}$ on $\widehat{Z}_{\operatorname{min}}$. Thus the formula from \cite{Kirwan1985} can be used to describe the Poincar\'e series of $\widetilde{Z}_{\operatorname{min}} \gitq R_{\l}$ in terms of the equivariant Poincar\'e series of $\widehat{Z}_{\operatorname{min}}$ (and of iterated blow-ups of it), which can in turn be computed from the equivariant Poincar\'e series of $\zmin$  (and of iterated blow-ups of it) by the proof of \thref{nonredformula}. 
\end{rk}

\bibliographystyle{abbrv}
\bibliography{References}

\end{document}